\setlist[enumerate]{noitemsep}
\setlist[enumerate, 1]{label = (\alph*)}
\setlist[itemize]{noitemsep}
\numberwithin{equation}{subsection}
\crefname{equation}{}{}
\newtheorem{theorem}[equation]{Theorem}
\newtheorem{lemma}[equation]{Lemma}
\newtheorem{proposition}[equation]{Proposition}
\newtheorem{corollary}[equation]{Corollary}
\newtheorem{introtheorem}{Theorem}
\crefname{introtheorem}{Theorem}{Theorems}
\theoremstyle{definition}
\newtheorem{definition}[equation]{Definition}
\newtheorem{remark}[equation]{Remark}
\newtheorem{notation}[equation]{Notation}
\newlist{propenum}{enumerate}{1}
\setlist[propenum]{label=(\alph*),ref=\theproposition\,(\alph*)}
\crefname{propenumi}{Proposition}{propositions}
\newcommand{\bbZ}{\mathbb{Z}}
\newcommand{\bbF}{\mathbb{F}}
\newcommand{\bbE}{\mathbb{E}}
\newcommand{\cP}{\mathcal{P}}
\DeclareMathOperator{\coker}{coker}
\DeclareMathOperator{\res}{res}
\DeclareMathOperator{\Ext}{Ext}
\DeclareMathOperator{\Gal}{Gal}
\newcommand{\Mod}{\textrm{Mod}}
\newcommand{\Perf}{\text{Perf}}
\newcommand{\Mack}{\textrm{Mack}}
\DeclareMathOperator{\hofib}{hofib}
\newcommand{\ul}{\underline}
\newcommand{\olboxempty}{{\overline\boxempty}}
\newcommand{\olboxslash}{{\overline\boxslash}}
\newcommand{\olcirc}{{\overline\circ}}
\newcommand{\HZ}{H\underline{\mathbb{Z}}}
\newcommand{\wt}{\widetilde}
\newcommand{\EP}{{E\cP}}
\newcommand{\tEP}{\wt{E}\cP}
\newcommand{\tEG}{\wt{E}G}
\newcommand{\wH}{\widehat{H}}
\renewcommand\epsilon\varepsilon
\renewcommand\emptyset\varnothing
\title{The Galois-equivariant $K$-theory of finite fields}
\author{David Chan \and Chase Vogeli}
\date{}
\begin{document}

\maketitle
\begin{abstract}
	We compute the $RO(G)$-graded equivariant algebraic $K$-groups of a finite field with an action by its Galois group $G$. Specifically, we show these $K$-groups split as the sum of an explicitly computable term and the well-studied $RO(G)$-graded coefficient groups of the equivariant Eilenberg--MacLane spectrum $\HZ$. Our comparison between the equivariant $K$-theory spectrum and $\HZ$ further shows they share the same Tate spectra and geometric fixed point spectra. In the case where $G$ has prime order, we provide an explicit presentation of the equivariant $K$-groups. 

    \vspace{\medskipamount}\noindent \textbf{MSC 2020}: 19D50 (Primary), 19L47, 55P91, 55Q91 (Secondary)
\end{abstract}

{\small\tableofcontents} 

\section{Introduction} \label{sec: introduction} 

    Algebraic $K$-theory is an important invariant of rings which provides a natural home for constructions in a range of subjects, from number theory to geometric topology. Classically, low dimensional $K$-groups of rings were defined first in terms of concrete algebraic constructions in work of Grothendieck, Bass, Milnor, and others.  Over time it was realized that these groups fit into a larger picture: there should be a sequence of such groups, indexed on the natural numbers.  This idea was first implemented in the work of Quillen, who constructed a space whose homotopy groups agreed with the already-known $K$-groups and whose higher homotopy groups give the correct generalization of ``higher algebraic $K$-theory.'' Using this machinery, Quillen gave a complete computation of the higher algebraic $K$-theory of finite fields \cite{Quillen:finite-field}.

    Despite Quillen's success in computing the algebraic $K$-theory of finite fields, the task of computing higher algebraic $K$-theory for other rings remains difficult. Indeed, the algebraic $K$-theory of the integers is still not entirely known, and its computation would resolve the longstanding Kummer--Vandiver conjecture in number theory \cite{Kurihara}.

    The work of this paper exists in the context of \emph{equivariant algebraic $K$-theory}, a variant of algebraic $K$-theory defined for rings with an action by a group $G$.  The most familiar examples of such rings come to us from Galois theory, where the group $G$ is the Galois group of a field extension.  Work of Merling \cite{Mona:Grings} provides a construction of algebraic $K$-theory for rings with $G$-action which produces a \emph{genuine $G$-spectrum} -- an enhancement of a spectrum with $G$-action -- with many desirable properties.  In particular, Merling shows that this construction is naturally related to the (now proven) Quillen--Lichtenbaum conjecture and provides a natural home for studying the $K$-theory of Galois extensions. 
    
    In addition to Merling's work, Barwick and Barwick--Glasman--Shah provide a different construction of equivariant algebraic $K$-theory using the language of $\infty$-categories and spectral Mackey functors \cite{Barwick,BGS}.  Malkiewich and Merling, in on-going work with Goodwillie and Igusa, are using techniques of equivariant algebraic $K$-theory to prove an equivariant refinement of Waldhausen's $A$-theory and the stable parametrized h-cobordism theorem \cite{Malkiewich--Merling1,Malkiewich--Merling2,Malkiewich--Merling--Goodwillie--Igusa}.  A connection between equivariant algebraic $K$-groups, equivariant $A$-theory, and Wall's finiteness obstruction is provided in work of the first-named author, Calle, and Mejia \cite{Calle--Chan--Mejia}.
    \subsection{Main result}
    
    One feature of genuine $G$-spectra is that their homotopy groups are graded on $RO(G)$, the Grothendieck group of real orthogonal representations of the group $G$.  The ordinary $\bbZ$-grading is recovered by considering the trivial $G$-representations of various dimensions. The computation of $RO(G)$-graded equivariant homotopy groups, even in tractable cases like Eilenberg--MacLane spectra, can be challenging. This work can be quite valuable, however.  For example, in Hill, Hopkins, and Ravenel's solution to the Kervaire invariant one problem they make critical use of these $RO(G)$-graded homotopy groups for the group $G=C_8$ \cite{HHR}.
    
    Let us fix a field $k$ with $G$-action and denote the equivariant algebraic $K$-theory of $k$ by $K_G(k)$.  Because $K_G(k)$ is a genuine $G$-spectrum, the algebraic $K$-groups are naturally graded on the group $RO(G)$. Our main result extends Quillen's computation of the $K$-groups of finite fields to a computation of the equivariant algebraic $K$-theory of finite fields with action by their Galois groups.

    \begin{introtheorem}[{\cref{thm: K groups split,prop: homotopy of the fiber}}]\label{thm: main thm intro}
        Let $k/\bbF_q$ be any finite extension of finite fields with Galois group $G$, and let $V$ be a virtual real orthogonal $G$-representation of dimension $|V|$.  Then the equivariant algebraic $K$-groups of $k$ split as 
        \[ 
            \pi^G_{V}(K_G(k))\cong
            \begin{cases}
                H^0(G;\pi_{|V|}K(k))\oplus \pi^G_{V}(\HZ), & |V|>0, \\
                \pi^G_{V}(\HZ), & |V|\leq 0,
            \end{cases} 
        \] 
        where $\HZ$ is the equivariant Eilenberg--MacLane spectrum on the constant Mackey functor $\underline{\bbZ}$. The $RO(G)$-graded ring structure is described in \cref{prop: products}.
    \end{introtheorem}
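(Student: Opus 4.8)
The plan is to compare $K_G(k)$ with $\HZ$ through a single natural map and to control the fiber using Quillen's computation together with a vanishing-of-Tate-cohomology input. First I would identify the zeroth homotopy Mackey functor $\ul\pi_0 K_G(k)\cong\ul\bbZ$: the underlying group is $K_0(k)=\bbZ$, and the restriction and transfer maps are forced by rank together with Galois descent for $K_0$. The zeroth Postnikov section then provides a map of $G$-spectra $\rho\colon K_G(k)\to\HZ$, and I would set $F\coloneqq\tau_{\geq 1}K_G(k)=\hofib(\rho)$, producing a fiber sequence $F\to K_G(k)\xrightarrow{\rho}\HZ$ and its associated $RO(G)$-graded long exact sequence.

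The engine of the argument is Quillen's theorem that the underlying spectrum of $F$ has $\pi_{2i-1}\cong\bbZ/(q^{ni}-1)$ for $i\geq 1$ and vanishes otherwise, with a chosen generator (Frobenius) of $G$ acting on $\pi_{2i-1}$ by multiplication by $q^i$. The decisive arithmetic fact is that the Tate cohomology $\wH^\ast\big(G;\pi_{2i-1}K(k)\big)$ vanishes for every $i\geq 1$: the norm element is $1+q^i+\cdots+q^{(n-1)i}=(q^{ni}-1)/(q^i-1)$, and its image coincides with the kernel of multiplication by $q^i-1$ in $\bbZ/(q^{ni}-1)$, so invariants, coinvariants, and the norm all line up. From this I would deduce two structural statements: that $F^{tG}\simeq\ast$ (the Tate spectral sequence has trivial $E_2$-page), and, after identifying the geometric fixed points $\Phi^G K_G(k)$, that $K_G(k)$ and $\HZ$ agree on both $\Phi^H(-)$ and $(-)^{tG}$ for every nontrivial $H\leq G$. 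The vanishing of $F^{tG}$ feeds into the fracture (Tate) square, allowing $\pi^G_\star F$ to be assembled from its geometric piece $\Phi^G F$ and its Borel piece $F(EG_+,F)$ with no gluing correction.

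With the comparison in hand I would compute $\pi^G_V F$ and identify it with $H^0\!\big(G;\pi_{|V|}K(k)\big)$. Using the fracture square, the Borel contribution is governed by a descent spectral sequence whose higher lines vanish by the Tate computation, leaving only the invariants $H^0\!\big(G;\pi_{|V|}K(k)\big)$, which are nonzero precisely when $|V|$ is odd and positive; the geometric contribution is pinned down by the $\Phi^G$-comparison with $\HZ$. Finally I would show that the long exact sequence of $\rho$ breaks into split short exact sequences $0\to\pi^G_V F\to\pi^G_V K_G(k)\to\pi^G_V\HZ\to 0$, which I would obtain either by constructing a section of $\rho$ from the multiplicative structure or by verifying that the connecting homomorphisms $\pi^G_V\HZ\to\pi^G_{V-1}F$ vanish. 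Assembling these steps yields the stated direct-sum decomposition; the ring structure is then treated separately in \cref{prop: products}.

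I expect the main obstacle to be twofold. The first is genuinely identifying $\Phi^G K_G(k)$, since the geometric fixed points of equivariant algebraic $K$-theory are not visible from the underlying spectrum and the Galois action alone; this is the one place where the specific construction of $K_G(k)$, rather than formal homotopy theory, must be used. The second is the $RO(G)$-graded bookkeeping in the fiber computation: because representation spheres interact nontrivially with the Galois module structure, one must check carefully that the surviving contribution is the untwisted invariant group $H^0\!\big(G;\pi_{|V|}K(k)\big)$ depending only on $|V|$, and that no extension problems obstruct the splitting.
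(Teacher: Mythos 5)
Your skeleton matches the paper's: the truncation map $\tau_{\leq 0}\colon K_G(k)\to \HZ$, the fiber $F=\tau_{\geq 1}K_G(k)$, the vanishing of $\wH^*(G;K_{2i-1}(k))$ because the norm is an isomorphism, a Tate-square comparison, and a splitting of the resulting long exact sequence. But there are two genuine gaps. The first is the one you yourself flag and then do not close: your assembly of $\pi^G_\star F$ from a ``geometric piece'' and a ``Borel piece'' requires knowing $\wt{K_G(k)}$ (equivalently the geometric side of the Tate square), and nothing in your proposal computes it. The Tate cohomology vanishing gives you $F^t\simeq *$, but the pullback square then expresses $F$ as $\wt{F}\times F^h$, and $\wt{F}$ is exactly the unknown. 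The paper's resolution is the decisive non-formal input you are missing: Merling's theorem (\cref{thm: fixed points of K}) identifies the genuine fixed points $K_G(k)^{C_m}\simeq K(\bbF_{q^{n/m}})$, and comparing with Quillen's computation shows $K_G(k)\to K_G(k)^h$ is a connective cover of genuine $G$-spectra (\cref{prop: K theory connected cover}). Feeding this through the Tate diagram (\cref{lemma: Greenlees-Meier Connected cover lemma}) shows $\wt{K_G(k)}\to K_G(k)^t$ is a connective cover, which together with $K_G(k)^t\simeq\HZ^t$ pins down $\wt{K_G(k)}\simeq\wt{\HZ}$ and hence $\wt F\simeq *$. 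Without some such use of the genuine fixed points, the argument cannot get off the ground.

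The second gap is the splitting itself. A null connecting homomorphism only yields short exact sequences
\[
0\to \pi^G_V F\to \pi^G_V K_G(k)\to \pi^G_V \HZ\to 0,
\]
and at the level of $G$-fixed-point groups these extensions are genuinely ambiguous: for $\bbF_9/\bbF_3$ and $|V|=1$ one faces $\Ext^1_{\bbZ}(\bbZ/2,\bbZ/2)=\bbZ/2$. There is also no section of $\tau_{\leq 0}$ as a map of spectra (that would force $K(k)\simeq H\bbZ\vee \tau_{\geq 1}K(k)$ on underlying spectra, which is false), so the ``section from the multiplicative structure'' route fails. The paper resolves this by working with homotopy Mackey functors throughout and proving $\Ext^1(\ul M,\ul R(N))=0$ whenever $\ul M(G/e)=0$ (\cref{prop: no extensions}); since $\ul\pi_V\HZ$ has trivial underlying group for $|V|=2i-1>0$ and $\ul\pi_V F$ is a fixed-point Mackey functor, every such extension splits. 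You acknowledge that extension problems could obstruct the splitting but supply no mechanism to rule them out; this Mackey-level Ext vanishing is the missing lemma.
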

    
   This is one of the first systematic computations of equivariant algebraic $K$-groups.  Indeed, to our knowledge, the only prior computation of $RO(G)$-graded equivariant $K$-groups to appear in the literature is in the recent work of Elmanto and Zhang \cite{Elmanto--Zhang}.  There, the authors give a partial computation of the $RO(C_2)$-graded $K$-groups of finite quadratic field extensions, and show how these groups are related to the study of Artin $L$-functions.

    \subsection{Overview of key ideas}

    Our computation of the $RO(G)$-graded $K$-groups of finite fields hinges on a comparison of $K_G(k)$ with $\HZ$, the equivariant Eilenberg--MacLane spectrum on the constant $\bbZ$ Mackey functor.  The choice of $\ul{\bbZ}$ comes from the fact that there is an isomorphism of Mackey functors $\ul{\pi}_0(K_G(k))\cong \ul{\bbZ}$.  Accordingly, there is always a truncation map of genuine $G$-spectra $\tau_{\leq 0}\colon K_G(k)\to \HZ$ which realizes this isomorphism. The fiber of this map 
    \[
        \tau_{\geq 1} K_G(k) = \hofib(K_G(k) \xrightarrow{\tau_{\leq 0}} \HZ)
    \]
    is the \emph{1-connective cover} of $K_G(k)$.

    Our analysis of this map begins with the observation that it is fully computable after applying a construction called geometric completion. If $X$ is a $G$-spectrum, its \textit{geometric completion} is the mapping $G$-spectrum
    \[
        X^h = F(EG_+,X).
    \]
    The $G$-fixed points of $X^h$ are the familiar \textit{homotopy fixed points} $X^{hG}$ of $X$. As such, this is computable via a homotopy fixed-point spectral sequence which begins with group cohomology. We directly compute the following.

    \begin{introtheorem}[\cref{prop: homotopy of the fiber}] \label{thm: fiber intro version}
        For any extension of finite fields $k/\bbF_q$ with Galois group $G$, the 1-connective cover of the geometric completion of $K_G(k)$ has $RO(G)$-graded homotopy groups
        \[
            \pi^G_V \left( \tau_{\geq 1} K_G(k)^h \right) \cong 
            \begin{cases} 
                H^0(G;K_{|V|}(k)), & |V|>0, \\
                0 & \text{otherwise.}
            \end{cases}
        \]
    \end{introtheorem}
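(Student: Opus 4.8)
The plan is to compute the right-hand side directly from the homotopy fixed-point spectral sequence, feeding in Quillen's computation of $K_*(k)$ together with the Galois action. Writing $Y = \tau_{\geq 1} K_G(k)$, the first step is to convert the $RO(G)$-graded homotopy of the geometric completion into ordinary homotopy of a homotopy fixed-point spectrum: using the $F(EG_+,-)$ adjunction and invertibility of $S^V$, one gets
\[
    \pi^G_V(Y^h) \cong \pi_0\bigl((S^{-V}\wedge Y)^{hG}\bigr),
\]
so everything reduces to $\pi_0$ of a single spectrum. Here the underlying spectrum of $Y$ is the ordinary $1$-connective cover $\tau_{\geq 1}K(k)$, with homotopy $K_m(k)$ for $m\geq 1$ and $0$ otherwise, equipped with the Galois action.

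Next I would identify the $E_2$-page. The spectral sequence reads
\[
    E_2^{s,t} = H^s\bigl(G;\pi_t(S^{-V}\wedge Y)\bigr) \Longrightarrow \pi_{t-s}\bigl((S^{-V}\wedge Y)^{hG}\bigr),
\]
with coefficient module $\pi_t(S^{-V}\wedge Y)\cong K_{t+|V|}(k)$, twisted by the orientation character $\det V\colon G\to\{\pm1\}$ coming from the $G$-action on $S^{-V}$. Since we only want $\pi_0$, the relevant terms lie on the diagonal $t=s$, where $E_2^{s,s}=H^s(G;K_{s+|V|}(k))$ (twisted), vanishing unless $s+|V|$ is a positive odd integer. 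The key input is Quillen's theorem: $K_{2i-1}(k)\cong\bbZ/(q^{ni}-1)$ with $G=\langle\phi\rangle\cong C_n$ acting through Frobenius by multiplication by $q^i$ (checked on $K_1=k^\times$), while the positive even $K$-groups vanish. Note all these groups have order prime to $\operatorname{char}(k)$, so no characteristic-$p$ subtleties arise.

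The engine of the proof is the claim that each $K_{2i-1}(k)$ is a \emph{cohomologically trivial} $G$-module, forcing the spectral sequence onto the $s=0$ line. For the cyclic group $C_n$ this is an explicit calculation: writing $\phi-1$ and the norm $N=1+\phi+\cdots+\phi^{n-1}$ as multiplication by $q^i-1$ and $(q^{ni}-1)/(q^i-1)$ on $\bbZ/(q^{ni}-1)$, the Tate groups $\widehat{H}^0$ and $\widehat{H}^{-1}$ reduce to comparing kernels and images of these multiplication maps, which coincide by elementary $\gcd$ identities — and this persists after twisting by $\det V$. Hence $H^s(G;K_{s+|V|}(k))=0$ for all $s\geq 1$, the diagonal contributes only its $s=0$ term, and the $E_2^{0,0}$-entry supports no differentials and no hidden extensions (its potential targets $E_2^{r,r-1}$ with $r\geq 2$ vanish by the same triviality). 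For $|V|>0$ the surviving term is $H^0(G;K_{|V|}(k))$; for $|V|\leq 0$ every diagonal entry has $s\geq 1$ and meets only vanishing $K$-groups, giving $0$. Here the role of $\tau_{\geq 1}$ is exactly to delete the $H^0(G;\bbZ)=\bbZ$ that $K_0(k)$ would otherwise place in total degree $0$.

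I expect the main obstacle to be the $s=0$ line rather than the higher vanishing: while cohomological triviality (and hence the collapse of the spectral sequence) is insensitive to the orientation twist, the surviving group $H^0(G;K_{|V|}(k)\{\det V\})$ is \emph{not}, so the delicate point is to pin down this twist and identify the term with the untwisted $H^0(G;K_{|V|}(k))$ asserted in the statement. This is automatic when $|G|$ is odd, since then $\det V$ is trivial, but when $|G|$ is even one must argue that the representation grading genuinely collapses to its dimension $|V|$ on the zero line — controlling the interaction between $S^{-V}$ and the Galois module. Verifying this compatibility, together with the bookkeeping of the $RO(G)$-graded indexing, is where I would concentrate the care.
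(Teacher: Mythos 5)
Your argument is essentially the paper's: the engine in both cases is the observation that the norm map on $K_{2i-1}(k)$ (and on its sign twist) is an isomorphism, so these Galois modules are cohomologically trivial and the homotopy fixed-point spectral sequence collapses onto the $s=0$ line with no room for differentials or extensions. The paper merely packages this slightly differently — it first computes $\ul\pi_V K_G(k)^h$ and $\ul\pi_V\HZ^h$ via the HFPSS and then reads off the fiber from the long exact sequence of $\tau_{\geq 1}K_G(k)^h\to K_G(k)^h\to\HZ^h$, and it records the answer as Mackey functors rather than top-level groups — but the computational content is identical to yours.

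One correction to your last paragraph: you should not try to show that the surviving term agrees with the \emph{untwisted} $H^0(G;K_{|V|}(k))$, because that is false when $|G|=n$ is even and $V$ is orientation-reversing. In that case the twist genuinely changes the answer: $H^0(G;K_{2i-1}(k))\cong\bbZ/(q^i-1)$ whereas $H^0(G;K_{2i-1}(k)^\sigma)\cong\bbZ/(q^i+1)$, and it is the latter that occurs. The displayed formula in the introductory statement is shorthand for $H^0(G;\pi^e_VK_G(k))$, i.e., the coefficient module already carries the orientation character of $V$; the precise statement in the body of the paper (\cref{prop: homotopy of the fiber}) separates the two cases as $\varominus^i$ versus $\varoplus^i$, whose top levels are exactly the two groups above. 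So the ``delicate point'' you isolate is resolved by reading the coefficients correctly, not by arguing that the twist disappears — an argument along the lines you sketch would run into a contradiction for even $n$.
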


    The proof of \cref{thm: main thm intro} proceeds by showing geometric completion induces an equivalence on 1-connective covers
    \[ 
        \tau_{\geq 1} K_G(k) \xrightarrow{\sim} \tau_{\geq 1} K_G(k)^h,
    \]
    which yields the following result.
    
    \begin{introtheorem}[\cref{thm: pullback theorem}]\label{thm: pullback thm intro version}
        For any extension of finite fields $k/\bbF_q$ with Galois group $G$ there is a homotopy pullback of genuine $G$-spectra
        \[
            \begin{tikzcd}
                K_G(k) \ar[r, "\tau_{\leq 0}"] \ar[d] & \HZ \ar[d]\\
                K_G(k)^h \ar[r,  "\tau_{\leq 0}"] & \HZ^h.
            \end{tikzcd}
        \]
    \end{introtheorem}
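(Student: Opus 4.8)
The plan is to read the square as the naturality square of geometric completion $(-)^h=F(EG_+,-)$ evaluated on the truncation $\tau_{\leq 0}\colon K_G(k)\to\HZ$, with the two vertical arrows the completion units, and then to reduce the pullback assertion to the equivalence on $1$-connective covers stated just above. Since the category of genuine $G$-spectra is stable, a commutative square is a homotopy pullback precisely when the map induced on horizontal homotopy fibers is an equivalence. The fiber of the top arrow is, by definition, $\tau_{\geq 1}K_G(k)$.

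Next I would use that $F(EG_+,-)$ is a right adjoint and therefore preserves homotopy fibers. Applying it to the defining fiber sequence $\tau_{\geq 1}K_G(k)\to K_G(k)\xrightarrow{\tau_{\leq 0}}\HZ$ exhibits the bottom row as a fiber sequence
\[
(\tau_{\geq 1}K_G(k))^h\to K_G(k)^h\xrightarrow{\tau_{\leq 0}}\HZ^h,
\]
so the fiber of the bottom arrow is $(\tau_{\geq 1}K_G(k))^h$, and by naturality of the completion unit the induced map on fibers is the completion map $\tau_{\geq 1}K_G(k)\to(\tau_{\geq 1}K_G(k))^h$. Thus the theorem is equivalent to the statement that $\tau_{\geq 1}K_G(k)$ is cofree, i.e.\ that this completion map is an equivalence.

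To conclude I would identify this fiber map with the equivalence on $1$-connective covers assumed above, which requires reconciling $(\tau_{\geq 1}K_G(k))^h$ with $\tau_{\geq 1}(K_G(k)^h)$. The spectrum $\HZ^h$ is coconnective — its genuine homotopy is $\pi^H_n\HZ^h\cong H^{-n}(H;\bbZ)$, which vanishes for $n\geq 1$ and every $H\leq G$ — so applying $\tau_{\geq 1}$ to the fiber sequence above yields $\tau_{\geq 1}\big((\tau_{\geq 1}K_G(k))^h\big)\xrightarrow{\sim}\tau_{\geq 1}(K_G(k)^h)$. The additional input needed, which I would take from the proof of the assumed equivalence, is that $(\tau_{\geq 1}K_G(k))^h$ is already $1$-connective; granting this it coincides with $\tau_{\geq 1}(K_G(k)^h)$, the fiber map becomes the cited equivalence, and the square is a homotopy pullback.

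The genuine obstacle, I expect, lies entirely in the input equivalence $\tau_{\geq 1}K_G(k)\xrightarrow{\sim}\tau_{\geq 1}K_G(k)^h$ and not in the formal reduction above. Proving it amounts to showing that $\tau_{\geq 1}K_G(k)$ is cofree, i.e.\ that the fiber $F(\tEG,\tau_{\geq 1}K_G(k))$ of its completion map is contractible. Through the homotopy fixed-point spectral sequence computing $\pi^H_\ast\big((\tau_{\geq 1}K_G(k))^h\big)$, this comes down to the vanishing of the group cohomology $H^s(H;K_t(k))$ for $s\geq t\geq 1$, which rests on the special arithmetic of finite fields — Hilbert 90, surjectivity of the norm, and the resulting cohomological triviality of the higher $K$-groups $K_t(k)$ under the Galois action. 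Once that vanishing is in hand, the passage to the homotopy pullback is purely formal.
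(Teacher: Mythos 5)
Your formal reduction is correct, and it packages the argument genuinely differently from the paper. You observe that, by stability and the exactness of $F(EG_+,-)$, the square is a homotopy pullback if and only if the induced map on horizontal fibers --- which naturality identifies with the completion unit $\tau_{\geq 1}K_G(k)\to(\tau_{\geq 1}K_G(k))^h$ --- is an equivalence, i.e.\ if and only if the fiber $\tau_{\geq 1}K_G(k)$ is cofree. The paper instead routes everything through the Tate diagram: it proves $K_G(k)^t\to\HZ^t$ is an equivalence by comparing Tate spectral sequences (\cref{prop: Tate equivalence}), deduces $\wt{K_G(k)}\to\wt{\HZ}$ from the fact that both maps $\wt{X}\to X^t$ are connective covers (\cref{cor: Tate connected cover,prop: tilde equivalence}), and then invokes \cref{lemma: pullback lemma}, whose proof is essentially your fiber comparison run inside the cube of Tate squares. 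The two routes are formally equivalent and consume the same computational inputs: cohomological triviality of the $G$-modules $K_t(k)$ (\cref{lemma:ominus,lemma:NormOplus}) and the descent statement that the genuine fixed points $K_G(k)^{C_m}\simeq K(\bbF_{q^{n/m}})$ map by a connective cover to the homotopy fixed points (\cref{prop: K theory connected cover}, resting on \cref{thm: fixed points of K,theorem: Quillen}). What the paper's longer route buys is that the intermediate equivalences on $\wt{(-)}$ and $(-)^t$ are stated separately and reused for the geometric fixed point computation; your route is more economical if the pullback square is all one wants.

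Two imprecisions in your final paragraph are worth flagging. First, vanishing of $H^s(H;K_t(k))$ in the range $s\geq t\geq 1$ only gives $1$-connectivity of $(\tau_{\geq 1}K_G(k))^h$; cofreeness needs vanishing for all $s\geq 1$, $t\geq 1$ (so that the fixed-point spectral sequence collapses onto its $0$-line) \emph{and}, crucially, the identification of $\pi_{2i-1}\bigl((\tau_{\geq 1}K_G(k))^{C_m}\bigr)\cong K_{2i-1}(\bbF_{q^{n/m}})$ with $H^0(C_m;K_{2i-1}(k))$ \emph{via the unit map}. That last step is not a cohomological vanishing statement: it is precisely where Merling's fixed-point theorem and Quillen's computation of the Frobenius action enter, and the paper settles the question of whether the abstract isomorphism is realized by the map using the fact that $\varominus^i$ is a fixed-point Mackey functor (\cref{prop: K theory connected cover}). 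Second, the mechanism is not Hilbert 90 but the elementary observation of \cref{lemma:ominus} that the norm on $\bbZ/(q^{ni}-1)$, with Frobenius acting by $q^i$, is multiplication by $(q^{ni}-1)/(q^{ni/m}-1)$ and carries a generator of the coinvariants to a generator of the invariants. With those points supplied, your argument goes through.
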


    In conclusion, we have a fiber sequence of $G$-spectra
    \begin{equation} \label{eq: intro fibration sequence}
        \tau_{\geq 1} K_G(k) \to K_G(k) \xrightarrow{\tau_{\leq 0}} \HZ
    \end{equation}
    where we completely understand the homotopy groups of the fiber. This fiber sequence gives the decomposition in \cref{thm: main thm intro}.

    Our work reduces the $RO(G)$-graded $K$-groups of finite fields to the computation of the $RO(G)$-graded coefficient ring of $\HZ$. While these groups are complicated, they are known in many cases where $G$ is a finite cyclic group -- the only possibility for a Galois extension of finite fields. The first such computation appears in unpublished work of Stong for $G=C_p$, and is recorded in Ferland--Lewis \cite{Ferland--Lewis}. The computation for $G=C_{2^n}$ plays an important role in the work of Hill--Hopkins--Ravenel \cite{HHR,HHR:C4}. Work of Zeng handles the case of $G=C_{p^2}$ \cite{Zeng}, and work of Georgakopoulos revisits this case as well as providing a computer program for $G=C_{p^n}$ \cite{Georgakopoulos}. A recent preprint of Basu--Dey addresses the general case of $G=C_n$ \cite{Basu--Dey}. In \cref{sec: examples} we specialize to the case $G = C_{\ell}$, for $\ell$ a prime, and write out the $K$-groups explicitly.

    \begin{remark}
        The statement here of \cref{thm: main thm intro} references the homotopy \emph{groups} of $K_G(k)$. However, associated to any $G$-spectrum are its homotopy \emph{Mackey functors}, a richer invariant which contains the homotopy groups of all fixed-point spectra. Although it is more data to keep track of, we will see that the additional structure present in Mackey functors dramatically simplify calculations. In particular, it is essential to our approach to resolving extension problems that appear throughout the paper.  See \cref{remark: Mackey functors} for more details.  The Mackey functor structure is also critical in our analysis of the multiplicative structures in \cref{section: multiplicative structure}, specifically in the proof of \cref{prop: products}.
        
        For this reason, we work almost exclusively with the homotopy Mackey functors of $G$-spectra in the body of the paper.
    \end{remark}

\subsubsection{Proof of \cref{thm: pullback thm intro version}}

    Let us briefly outline our approach to proving \cref{thm: pullback thm intro version}. The main method we employ is a comparison of the \emph{Tate squares} associated to the $G$-spectra $K_G(k)$ and $\HZ$.  To any genuine $G$-spectrum $X$ there is a functorially associated homotopy pullback diagram of the form
    \[
        \begin{tikzcd}
            X\ar[r] \ar[d] & \wt{X} \ar[d]\\
            X^h \ar[r]  & X^t
        \end{tikzcd}
    \]
    where $X^t$ is called the \emph{Tate $G$-spectrum} of $X$.  As the name suggests, the homotopy groups of $X^t$ can computed using a spectral sequence whose $E^2$-page is computed using Tate cohomology. The map $K_G(k)\to \HZ$ induces a map on Tate diagrams and the following theorem is proved using direct computation.
    \begin{introtheorem} [\cref{prop: Tate equivalence} and \cref{prop: tilde equivalence}] \label{thm: Tate tilde intro version}
        For any extension of finite fields $k/\bbF_q$ with Galois group $G$ the map $K_G(k)\to \HZ$ induces equivalences of genuine $G$-spectra 
        \[\wt{K_G(k)}\to \wt{\HZ}\quad \mathrm{and}\quad K_G(k)^t\to \HZ^t.\]
    \end{introtheorem}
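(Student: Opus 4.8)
The plan is to deduce both equivalences in parallel from a single arithmetic input. Write $F = \tau_{\geq 1}K_G(k)$ for the fiber of $\tau_{\leq 0}\colon K_G(k)\to\HZ$, so that there is a cofiber sequence $F\to K_G(k)\to\HZ$ of genuine $G$-spectra. Both $X\mapsto\widetilde X = \tEG\sma X$ and $X\mapsto X^t = \tEG\sma F(EG_+,X)$ are exact functors, so they carry this cofiber sequence to cofiber sequences; hence $\widetilde{K_G(k)}\to\widetilde{\HZ}$ (resp.\ $K_G(k)^t\to\HZ^t$) is an equivalence if and only if $\widetilde F\simeq\emptyset$ (resp.\ $F^t\simeq\emptyset$). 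I would therefore reduce the statement to the two vanishing claims $F^t\simeq\emptyset$ and $\widetilde F\simeq\emptyset$. The common input is Quillen's computation together with the Galois action: on underlying spectra $F$ has homotopy $K_{2i-1}(k)\cong\bbZ/(q^{ni}-1)$ concentrated in odd degrees, and the Frobenius generator $\sigma$ of $G\cong C_n$ acts on $K_{2i-1}(k)$ by multiplication by $q^i$.

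For the Tate equivalence I would use that $F^t$ depends only on the underlying spectrum with its $G$-action and is computed by the Tate spectral sequence $\wH^s(G;\pi_t F)\Rightarrow\pi_{t-s}(F^t)$. It then suffices to show $\wH^*(C_n;\bbZ/(q^{ni}-1))=0$ for every $i\geq 1$, where $\sigma$ acts by $q^i$. Setting $a=q^i$, so the module is $\bbZ/(a^n-1)$ with $\sigma$ acting as $a$, this is the standard $2$-periodic cyclic computation, with $\wH^{\mathrm{ev}}=M^G/NM$ and $\wH^{\mathrm{odd}}=\ker N/(\sigma-1)M$ and norm $N=1+a+\dots+a^{n-1}=(a^n-1)/(a-1)$. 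One checks directly that the norm surjects onto the invariants, $NM=M^G$, and that $(\sigma-1)M=\ker N$, so both Tate groups vanish. Hence $E_2=0$ and $F^t\simeq\emptyset$.

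For the tilde equivalence it suffices to prove $\Phi^H F\simeq\emptyset$ for every nontrivial $H\leq G$, since an equivalence of genuine $G$-spectra is detected on geometric fixed points, while $\Phi^H(\tEG\sma F)\simeq\Phi^H F$ for $H\neq e$ and $\Phi^e(\tEG\sma F)\simeq\emptyset$ automatically. As $F$ is connective, I would filter it by its Postnikov tower and reduce, via the convergent geometric-fixed-point spectral sequence, to showing $\Phi^H H\underline{M}\simeq\emptyset$ for each homotopy Mackey functor $\underline M=\underline\pi_m F$. Here I would use the structural fact that $\underline\pi_m F$ is the fixed-point Mackey functor of the Galois module $K_m(k)$, so its transfers are the norm maps; applying the isotropy separation sequence to $H\underline M$ and passing to $H$-fixed points then expresses $\pi_*\Phi^H H\underline M$ entirely through the group (co)homology and Tate cohomology of $K_m(k)$ for $H$ and its subquotients — precisely the groups shown to vanish in the Tate step. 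Thus every Eilenberg--MacLane layer has contractible geometric fixed points and $\Phi^H F\simeq\emptyset$.

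The main obstacle is this tilde step. Two points demand care: first, identifying $\underline\pi_* K_G(k)$, and hence $\underline\pi_* F$, with the fixed-point Mackey functors of the Galois modules is a genuine-equivariant descent input that must be in hand before the homological computation can run; and second, for subgroups $H$ that are not of prime order one must iterate geometric fixed points through the lattice of subgroups of $H$, so the reduction to Tate cohomology proceeds through nested families of subgroups rather than a single isotropy separation sequence. Once these are settled, both equivalences rest on the single arithmetic fact that $\sigma$ acts by $q^i$, which makes every relevant norm map surjective with the expected kernel and thereby annihilates all the Tate groups simultaneously.
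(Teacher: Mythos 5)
Your reduction of both statements to the vanishing of $F^t$ and $\wt F$ for $F=\tau_{\geq 1}K_G(k)$ is sound, and your Tate half is essentially the paper's own argument (\cref{lemma:ominus} and \cref{prop: Tate equivalence}): the whole content is that the norm on $\bbZ/(q^{ni}-1)$ with the Frobenius acting by $q^i$ is an isomorphism from coinvariants to invariants, so all Tate cohomology vanishes. The only thing to add there is that contractibility of the \emph{genuine} $G$-spectrum $F^t$ requires $\pi^H_*(F^t)=0$ for every subgroup $H=C_m\leq C_n$, not just $H=G$; since $\res_H F^t=(\res_H F)^t$, you need $\wH^*(C_m;\bbZ/(q^{ni}-1))=0$ with the generator acting by $q^{ni/m}$, which is the same computation with $a=q^{ni/m}$. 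This is exactly why the paper runs the argument with Mackey-functor-valued Tate cohomology.

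The tilde half is where the genuine gap sits. Your key claim is that $\Phi^H H\ul M\simeq 0$ for each homotopy Mackey functor $\ul M=\ul\pi_m F$ and each nontrivial $H$, but the justification --- that isotropy separation ``expresses $\pi_*\Phi^H H\ul M$ entirely through group (co)homology and Tate cohomology'' --- only works as stated for $H$ of prime order, where $\EP=EG$; for composite $H$ the relevant family is that of proper subgroups and the term $(\EP_+\wedge H\ul M)^H$ is not homotopy orbits, so the reduction you describe is not carried out (as you yourself concede). The gap is fillable, and more cheaply than by iterating through the subgroup lattice: since the norm $\ul L(K_m(k))\to\ul R(K_m(k))$ is an isomorphism at every level, all positive-degree group homology of $K_m(k)$ vanishes for every subgroup, so the free $G$-spectrum $EG_+\wedge HK_m(k)$ has homotopy Mackey functors concentrated in degree $0$ where they equal $\varominus^i$; hence $H\varominus^i\simeq EG_+\wedge HK_m(k)$ is free and $\Phi^H$ of it vanishes for all $H\neq e$. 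You should also say why $\pi_k\Phi^H F$ is detected on a finite Postnikov stage (geometric fixed points preserve connectivity). Note that the paper avoids all of this: it shows $X\to X^h$ is a connective cover for both $X=K_G(k)$ (Merling plus Quillen, \cref{prop: K theory connected cover}) and $X=\HZ$, deduces via \cref{lemma: Greenlees-Meier Connected cover lemma} that $\wt X\to X^t$ is a connective cover for both, and then reads the tilde equivalence off from the Tate equivalence by taking connective covers --- a formal argument requiring no geometric fixed points.
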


    Together with the naturality of the Tate square, this is enough to prove \cref{thm: pullback thm intro version}.

\subsubsection{Geometric fixed points}

    As a further demonstration of our methods, we show $K_G(k)$ and $\HZ$ have the same geometric fixed points. Using \cref{thm: main thm intro}, and the computation of the homotopy groups of the fiber, we prove the following.

    \begin{introtheorem}[\cref{cor: same geometric fixed points,prop: geometric fixed points of HZ}]\label{thm: geometric fixed points intro version}
    There is an equivalence of spectra 
    \[
        \Phi^G(K_G(k)) \xrightarrow{\sim} \Phi^G(\HZ).
    \]  
    The homotopy groups of these spectra as graded rings are given by
    \[
        \pi_*(\Phi^G(\HZ))\cong \begin{cases}
            \bbZ/p[x], & \textrm{$G\cong C_{p^n}, n>0,$}\\
            0, & \textrm{else,}
        \end{cases}
    \]
    where $|x|=2$.
    \end{introtheorem}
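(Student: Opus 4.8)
The plan is to deduce the equivalence formally from the identification $\wt{K_G(k)}\xrightarrow{\sim}\wt{\HZ}$ of \cref{prop: tilde equivalence}, using the principle that geometric fixed points see only the ``geometric part'' $\wt{X}=\tEG\sma X$ of a $G$-spectrum. Recall that $\Phi^G$ is exact and symmetric monoidal and annihilates every $G$-spectrum induced from a proper subgroup; since $EG$ is built entirely from free cells, $\Phi^G(EG_+\sma X)\simeq *$ for every $X$. Applying the exact functor $\Phi^G$ to the isotropy-separation cofiber sequence $EG_+\sma X\to X\to \tEG\sma X$ then produces a natural equivalence $\Phi^G(X)\xrightarrow{\sim}\Phi^G(\wt{X})$.

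With this in hand the first claim is immediate. By naturality, the structure map $K_G(k)\to\HZ$ induces a commuting square whose vertical maps $\Phi^G(K_G(k))\xrightarrow{\sim}\Phi^G(\wt{K_G(k)})$ and $\Phi^G(\HZ)\xrightarrow{\sim}\Phi^G(\wt{\HZ})$ are the equivalences just constructed, and whose bottom map $\Phi^G(\wt{K_G(k)})\to\Phi^G(\wt{\HZ})$ is an equivalence because $\wt{K_G(k)}\to\wt{\HZ}$ is one by \cref{prop: tilde equivalence}. Since three of the four maps are equivalences, so is the remaining map $\Phi^G(K_G(k))\to\Phi^G(\HZ)$.

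It remains to compute $\pi_*\Phi^G(\HZ)$ as a ring. Because $\Phi^G$ is symmetric monoidal, $\Phi^G(\HZ)$ is a ring spectrum whose zeroth homotopy group is the geometric fixed points (Brauer quotient) of the Mackey functor $\ul{\bbZ}$, namely
\[
    \pi_0\Phi^G(\HZ)\cong \ul{\bbZ}(G/G)\big/\textstyle\sum_{H\subsetneq G}\tr_H^G\,\ul{\bbZ}(G/H)\cong \bbZ\big/\gcd\{[G:H]:H\subsetneq G\},
\]
since the transfer $\tr_H^G$ on $\ul{\bbZ}$ is multiplication by the index $[G:H]$. As $G$ is cyclic, its maximal subgroups have prime index, so this gcd is the gcd of the prime divisors of $|G|$. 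If $|G|$ has two distinct prime divisors the gcd is $1$, forcing $\pi_0\Phi^G(\HZ)=0$; as this group contains the unit of the ring $\pi_*\Phi^G(\HZ)$, we get $1=0$ and hence $\Phi^G(\HZ)\simeq *$. If $G\cong C_{p^n}$ with $n>0$ the unique maximal subgroup has index $p$, so $\pi_0\Phi^G(\HZ)\cong\bbZ/p$.

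For $G\cong C_{p^n}$ the remaining homotopy is computed by inverting an Euler class. Let $\lambda$ be the two-dimensional real representation inflated along $G\twoheadrightarrow G/C_{p^{n-1}}\cong C_p$ from a faithful rotation; then $\lambda^H\neq 0$ precisely for $H\subsetneq G$, so $\tEP\simeq \colim_k S^{k\lambda}$ along multiplication by $a_\lambda\in\pi^G_{-\lambda}(\HZ)$ and
\[
    \pi_*\Phi^G(\HZ)\cong \colim_k \pi^G_{*-k\lambda}(\HZ)\cong a_\lambda^{-1}\,\pi^G_{*-*\lambda}(\HZ),
\]
the localization of the part of the $RO(G)$-graded homotopy supported on $\bbZ\{1\}\oplus\bbZ\{\lambda\}$. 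The inflated Euler class $a_\lambda$ is $p$-torsion, so inverting it collapses the coefficients to $\bbZ/p$, while the orientation class $u_\lambda\in\pi^G_{2-\lambda}(\HZ)$ survives; setting $x=u_\lambda a_\lambda^{-1}$ in degree $2$ identifies the localized ring with $\bbZ/p[x]$. The hard part is this last identification once $n\geq 2$: the $RO(C_{p^n})$-graded homotopy of $\HZ$ is considerably more intricate, involving several Euler and orientation classes with nontrivial relations, so extracting the $a_\lambda$-inverted ring takes care. I would either invoke the known computations directly (Stong and Ferland--Lewis \cite{Ferland--Lewis} for $C_p$; Zeng \cite{Zeng}, Georgakopoulos \cite{Georgakopoulos}, and Basu--Dey \cite{Basu--Dey} for $C_{p^n}$), or argue inductively via the decomposition $\Phi^{C_{p^n}}(\HZ)\simeq \Phi^{C_{p^n}/C_p}\bigl(\Phi^{C_p}(\HZ)\bigr)$ after identifying the residual equivariant structure of $\Phi^{C_p}(\HZ)$.
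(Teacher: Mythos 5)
Your proof of the equivalence $\Phi^G(K_G(k))\xrightarrow{\sim}\Phi^G(\HZ)$ is essentially the paper's: both apply $\Phi^G$ to the isotropy separation sequence, observe that $\Phi^G$ kills $EG_+\wedge X$ (the paper phrases this as $\tEP\wedge EG_+\simeq *$, checked via the $G$-homotopy equivalence $\EP\times EG\to EG$), and then quote \cref{prop: tilde equivalence}. No issues there.

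For the computation of $\pi_*\Phi^G(\HZ)$ your route genuinely diverges, with one part that improves on the paper and one part that is not actually finished. The vanishing when $|G|$ has two distinct prime factors via the Brauer quotient $\pi_0\Phi^G(\HZ)\cong\bbZ/\gcd\{[G:H]\}=0$ together with ``the unit is zero, so the ring spectrum is contractible'' is correct and arguably cleaner than the paper's argument, which instead inverts the Euler class $\alpha_{\wt\rho}$ and shows that the induced Euler classes $\alpha_{\Lambda_s}$ and $\alpha_{\Lambda_t}$ are torsion of coprime orders, so that inverting both annihilates $\HZ_\star$. The gap is the case $G\cong C_{p^n}$ with $n\geq 2$: you explicitly flag that extracting $a_\lambda^{-1}\HZ^G_\star$ from the $RO(C_{p^n})$-graded coefficients ``takes care'' and then offer two unexecuted options, one of which (iterating $\Phi^{C_{p^n}}\simeq\Phi^{C_{p^n}/C_p}\circ\Phi^{C_p}$) would require identifying $\Phi^{C_p}(\HZ)$ as a genuine $C_{p^{n-1}}$-spectrum, a nontrivial task you do not carry out, while the other leans on the full $C_{p^n}$ coefficient computations of Zeng et al., which is much heavier than needed. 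The paper's missing idea here is a direct reduction to $n=1$: the quotient $f\colon C_{p^n}\to C_p$ satisfies $f^*\tEP_{C_p}\simeq\tEP_{C_{p^n}}$, and the resulting isomorphism of Bredon chain complexes gives $\Phi^{C_{p^n}}(\HZ)_*\cong\HZ^{C_p}_*(\tEP_{C_p})$ immediately. For $n=1$ the paper then identifies $\Phi^{C_p}(\HZ)=\wt{\HZ}^{C_p}$ with the connective cover of $\HZ^{tC_p}$ (via \cref{cor: Tate connected cover}) and reads off $\bbZ/p[x]$ from the Tate spectral sequence, whereas your $n=1$ argument via $a_\lambda$-localization of the $\lambda$-graded part of Ferland--Lewis also works; either is fine, but you should adopt something like the inflation trick to close the $n\geq 2$ case.
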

    We note that the computation of the groups $\pi_*(\Phi^G(\HZ))$ is well-known to experts, but we did not know a reference in the literature so the computation is given in \cref{prop: geometric fixed points of HZ} below.
    
    \subsubsection{Multiplicative structure}
    
    When $R$ is a commutative ring with $G$-action, work of Barwick--Glasman--Shah implies that the $RO(G)$-graded homotopy groups $\pi_{\star}^G(K_G(R))$ form an $RO(G)$-graded ring.  For finite field $k$ with Galois action by $G$, we show that the $RO(G)$-graded ring $\pi_{\star}^G(K_G(k))$ is a square-zero extension of $\HZ^G_{\star}$ in \cref{theorem: square zero extension}.  In the case where $G= C_2$, we use this to give a presentation of the ring $\pi_{\star}^{C_2}(K_{C_2}(k))$ in \cref{theorem: ring structure for C2}.

\subsection{Future directions}

    We end the introduction with a short discussion of possible future computations for equivariant algebraic $K$-theory of other Galois extensions. To any Galois extension $E/F$ of fields there is a natural map
    \[
        K(F)\to K(E)^{hG}.
    \]
    Underlying the analysis in this paper is the fact that for finite fields, this map is a connective cover.  We prove a version of this statement on the level of $G$-spectra in \cref{prop: K theory connected cover}.  This behavior, a version of Galois descent, is atypical of Galois extensions of rings; indeed we are aware of no other examples where this for field extensions.  In particular, a direct application of the techniques in this paper is unlikely to apply in other examples.  

    On the other hand, the work of Clausen--Mathew--Naumann--Noel \cite{CMNN} (see also \cite{Thomason,Thomason--Trobaugh}) shows that algebraic $K$-theory satisfies Galois descent after chromatic localization at the spectra $T(n)$.  Given this, we expect that after $T(n)$-localization it should be possible to use the ideas of this paper for further computations of equivariant algebraic $K$-theory for other Galois ring extensions.  We intend to explore this idea in future work.

\subsection*{Outline}

    The paper is organized as follows. In \cref{sec: Tate} we recall the Tate diagram associated to a $G$-spectrum, which serves as a guiding framework to our work. In \cref{sec: equivariant algebra} we recall the definition of a Mackey functor and prove results concerning extensions of Mackey functors. In \cref{sec: Galois action} we make a number of group (co)homology computations which provide the input to homotopy fixed point and Tate spectral sequences. These computations set the stage for the proofs of the main theorems which are in \cref{sec: proof}. The multiplicative structure in $\ul{\pi}_{\star}(K_G(k))$ is discussed in \cref{section: multiplicative structure}. We conclude the paper in \cref{sec: examples} with explicit computations in the case $G$ is a cyclic group of prime order.

\subsection*{Acknowledgments}

    The authors would like to thank Liam Keenan, Tomas Mejia Gomez, J.D. Quigley, and Ben Spitz for helpful conversations. We would additionally like to thank Maxine Calle, Teena Gerhardt, Maximilien P\'eroux, and Inna Zakharevich for reading parts of earlier drafts of this paper. We also thank an anonymous referee for several helpful suggestions, and in particular a simplification of the proof of \cref{thm: geometric fixed points intro version}. The first-named author was partially supported by NSF grant DMS-2135960.

\section{The Tate diagram} \label{sec: Tate}

In this short section we recall the Tate diagram from \cite{Greenlees--May}, which forms the basis for our work in later sections. For any finite group $G$ let $EG$ be a contractible $CW$-complex with free $G$-action. An explicit model is the unit sphere in the $G$-representation obtained by taking the infinite direct sum of copies of the reduced regular representation of $G$.  Define a space $\tEG$ by the cofiber sequence
\[
    EG_+\xrightarrow{\epsilon} S^0\to \tEG
\]
and note that for any $G$-spectrum $X$ we have an induced cofiber sequence
\[
    EG_+\wedge X\to X\to \tEG \wedge X
\]
which is natural in $X$.

We write $X^h : = F(EG_+,X)$, where $F$ denotes the internal mapping spectrum, and call this spectrum the \emph{geometric completion} of $X$.  The map $\epsilon$ induces a map $X\cong  F(S^0,X)\to X^h$ which gives a map on cofiber sequences
\[
\begin{tikzcd}
	EG_+\wedge X \ar[r] \ar[d] & X \ar[d] \ar[r] & \tEG\wedge X \ar[d]\\
	EG_+\wedge X^h \ar[r]  & X^h \ar[r] & \tEG\wedge X^h 
\end{tikzcd}
\]
called the \emph{Tate diagram}. The key property of the Tate diagram is the following.

\begin{theorem}[{\cite[Proposition 1.2]{Greenlees--May}}]
	The left vertical map in the Tate diagram is an equivalence.  Equivalently, the right square is a homotopy pullback of $G$-spectra.
\end{theorem}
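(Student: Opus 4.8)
The plan is to prove the first assertion—that the left vertical map $EG_+ \wedge X \to EG_+ \wedge X^h$ is an equivalence—and then deduce the homotopy pullback statement formally. For that deduction, recall that the two rows of the Tate diagram are cofiber sequences, so in the stable category the right-hand square is a homotopy pullback precisely when the induced map on horizontal cofibers is an equivalence. Those cofibers are the suspended left-hand terms $\sus(EG_+ \wedge X)$ and $\sus(EG_+ \wedge X^h)$, and the comparison map between them is the suspension of the left vertical map. Thus the two formulations are equivalent, and it remains only to treat the left vertical map.

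The key observation is that both $EG_+ \wedge X$ and $EG_+ \wedge X^h$ are \emph{free} $G$-spectra, meaning they lie in the localizing subcategory generated by the free cell $G_+$: smashing any $G$-spectrum with $EG_+$ produces such an object, since $EG$ is built out of free $G$-cells. I would then invoke the standard fact that a map between free $G$-spectra is an equivalence as soon as its underlying map of nonequivariant spectra is. This follows from the Wirthm\"uller adjunction isomorphism $[G_+ \wedge S^n, Y]^G \cong \pi_n(\res_e Y)$, where $e$ denotes the trivial subgroup: a map inducing an underlying equivalence induces isomorphisms on all mapping groups out of the generating cells $G_+ \wedge S^n$, and hence is an equivalence between objects assembled from those cells.

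It therefore suffices to check that the left vertical map becomes an equivalence after restriction to the trivial subgroup $e$. Writing it as $\id_{EG_+} \wedge \eta$, where $\eta \colon X \to X^h = F(EG_+, X)$ is the geometric completion map induced by $\epsilon$, I would use that restriction to $e$ is symmetric monoidal and, because $EG_+$ is a filtered colimit of dualizable free $G$-spectra, commutes with the function spectrum $F(EG_+, -)$ up to the canonical equivalence $\res_e(EG_+) \simeq S^0$ coming from the nonequivariant contractibility of $EG$. Under these identifications $\res_e(\eta)$ is the identity of $\res_e(X)$, so it is an equivalence, and hence so is $\res_e(\id_{EG_+} \wedge \eta)$; the free-detection principle of the previous paragraph then finishes the proof.

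The main obstacle is the clean use of this free-detection principle: one must know both that smashing with $EG_+$ lands in the free subcategory and that underlying equivalences suffice there, which is exactly where the Wirthm\"uller isomorphism enters. A secondary technical point, easily overlooked, is the compatibility of restriction to the trivial subgroup with the internal function spectrum $F(EG_+, -)$; this is what reduces the whole statement to the nonequivariant contractibility of $EG$, after which there is nothing left to compute.
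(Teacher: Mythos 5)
Your argument is correct, and it is essentially the standard proof of this statement; the paper itself offers no proof, citing it to Greenlees--May, whose argument runs the same way: reduce the pullback claim to the left vertical map, note that both $EG_+\sma X$ and $EG_+\sma X^h$ are free $G$-spectra (built from cells $G_+\sma S^n$), invoke the fact that a map of free $G$-spectra which is an underlying equivalence is a genuine equivalence, and observe that $\res_e(\eta)$ is an equivalence because $EG$ is nonequivariantly contractible. The only minor remark is that your hedge about $EG_+$ being a filtered colimit of dualizable objects is unnecessary: for a finite group, $\res_e$ is monoidal and admits a left adjoint satisfying the projection formula, so it commutes with $F(A,-)$ for arbitrary $A$.
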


For the remainder of this paper we use the abbreviations
\begin{align*}
	X_h & := EG_+\wedge X\simeq EG_+\wedge X^h, \\
	X^t & := \tEG\wedge X^h, \\
	\wt{X} & := \tEG\wedge X, 
\end{align*}
so that the Tate diagram takes the abbreviated form
\[\begin{tikzcd}
	{X_h} & X & {\wt{X}} \\
	{X_h} & {X^h} & {X^t}
	\arrow[from=1-1, to=1-2]
	\arrow["\simeq"', from=1-1, to=2-1]
	\arrow[from=1-2, to=1-3]
	\arrow[from=1-2, to=2-2]
	\arrow["\lrcorner"{anchor=center, pos=0.125}, draw=none, from=1-2, to=2-3]
	\arrow[from=1-3, to=2-3]
	\arrow[from=2-1, to=2-2]
	\arrow[from=2-2, to=2-3]
\end{tikzcd}.\]

The bottom row of the Tate diagram is called the \emph{norm cofiber sequence}. The three $G$-spectra in the norm cofiber sequence come equipped with spectral sequences which compute their $RO(G)$-graded homotopy groups. These spectral sequences are recalled in \cref{subsec: norm cofib computation} and are used to compute the homotopy groups appearing in the norm cofiber sequence for equivariant algebraic $K$-theory spectra. From there, the computation of $RO(G)$-graded $K$-groups is achieved by comparing the top and bottom rows of the Tate diagram.

We end this section with a lemma we will need in \cref{sec: proof}.
\begin{lemma}\label{lemma: pullback lemma}
	Suppose that $f\colon X\to Y$ is a map of genuine $G$-spectra which induces equivalences $\wt{X}\to \wt{Y}$ and $X^t\to Y^t$.  Then the square
	\[
	\begin{tikzcd}
		X \ar[r,"f"] \ar[d] & Y \ar[d]\\
		X^h \ar[r,"f^h"] & Y^h
	\end{tikzcd}    
	\]
	is a homotopy pullback square.
\end{lemma}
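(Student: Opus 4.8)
The plan is to deduce the claim from the pasting lemma for homotopy pullback squares, leveraging that the Tate squares of $X$ and $Y$ are themselves homotopy pullbacks and that the entire Tate construction is natural in the $G$-spectrum. As a preliminary I would record that, since the Tate diagram is built functorially from $\epsilon\colon EG_+\to S^0$ and $F(EG_+,-)$, the map $f$ induces a map of Tate diagrams; in particular the structure maps $X\to\wt{X}$ and $X\to X^h$ are natural in $X$, so every square I assemble below commutes.

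Next I would paste the Tate square of $Y$ onto the right of the square in question, obtaining
\[
\begin{tikzcd}
X \ar[r,"f"] \ar[d] & Y \ar[r] \ar[d] & \wt{Y} \ar[d]\\
X^h \ar[r,"f^h"] & Y^h \ar[r] & Y^t
\end{tikzcd}
\]
whose right-hand square is a homotopy pullback by the theorem of Greenlees--May recalled above. The crux is to verify that the \emph{outer} rectangle is also a homotopy pullback. For this I would use naturality to rewrite its top edge $X\to\wt{Y}$ as $X\to\wt{X}\xrightarrow{\ \sim\ }\wt{Y}$ and its bottom edge $X^h\to Y^t$ as $X^h\to X^t\xrightarrow{\ \sim\ }Y^t$, the two labelled maps being equivalences by hypothesis. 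The outer rectangle is then the horizontal pasting of the Tate square of $X$ with the square whose horizontal edges are the equivalences $\wt{X}\to\wt{Y}$ and $X^t\to Y^t$; the former is a homotopy pullback by Greenlees--May and the latter trivially so, hence the outer rectangle is a homotopy pullback.

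Finally I would invoke the pasting lemma in reverse: in the pasted diagram above, since the right-hand square and the outer rectangle are both homotopy pullbacks, so is the left-hand square. This left-hand square is precisely the one in the statement, completing the argument.

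I expect the only genuine difficulty to be bookkeeping rather than homotopy theory: one must confirm that the two factorizations above commute coherently, which is exactly where naturality of the Tate diagram in the variable $X$ does the work. Should the pasting formalism feel unwieldy, an equivalent and perhaps more transparent route is to recall that the square is a homotopy pullback if and only if the induced map on horizontal fibers is an equivalence; because $F(EG_+,-)$ preserves fiber sequences this fiber map is the canonical $F\to F^h$ for $F=\hofib(f)$, and the Tate pullback square for $F$ identifies $\hofib(F\to F^h)$ with $\hofib(\wt{F}\to F^t)$, which vanishes since $\wt{(-)}$ and $(-)^t$ are exact and send $f$ to equivalences.
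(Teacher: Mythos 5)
Your pasting argument is correct: both squares you need are homotopy pullbacks (the Tate square of $Y$ and, after rewriting the outer rectangle via naturality, the Tate square of $X$ pasted with a square of equivalences), so the reverse pasting lemma does give the claim. The paper takes a slightly different but closely related route: it assembles the cube comparing the left squares of the Tate diagrams for $X$ and $Y$, takes horizontal fibers of $f_h$, $f$, $f^h$, and uses the cofiber sequences $X_h\to X\to \wt{X}$ and $X_h\to X^h\to X^t$ (whose third terms become equivalences under $f$ by hypothesis) to see that $\hofib(f_h)\to\hofib(f)$ and $\hofib(f_h)\to\hofib(f^h)$ are equivalences, whence $\hofib(f)\to\hofib(f^h)$ is one. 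So the paper leans on the ``left vertical map is an equivalence'' formulation of Greenlees--May, while you lean on the equivalent ``right square is a pullback'' formulation; the content is the same, and your version arguably externalizes the bookkeeping into the standard pasting lemma rather than into a cube of fiber sequences. Your closing alternative --- identifying the map on horizontal fibers with $F\to F^h$ for $F=\hofib(f)$ and killing $\hofib(\wt{F}\to F^t)$ by exactness of $\wt{(-)}$ and $(-)^t$ --- is essentially the paper's proof repackaged, so either write-up would be acceptable.
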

\begin{proof}
	Consider the commutative diagram
	\[\begin{tikzcd}[sep = small]
		{X_h} && X \\
		& {Y_h} && Y \\
		{X_h} && {X^h} \\
		& {Y_h} && {Y^h}
		\arrow[from=4-2, to=4-4]
		\arrow[from=2-2, to=2-4]
		\arrow[from=3-1, to=3-3]
		\arrow["\simeq"'{pos=0.3}, from=2-2, to=4-2]
		\arrow["\simeq"', from=1-1, to=3-1]
		\arrow["{f_h}"', from=1-1, to=2-2]
		\arrow[from=1-1, to=1-3]
		\arrow["f"', from=1-3, to=2-4]
		\arrow[from=2-4, to=4-4]
		\arrow["{f^h}"', from=3-3, to=4-4]
		\arrow[from=1-3, to=3-3]
		\arrow["{f_h}"', from=3-1, to=4-2]
	\end{tikzcd}\]
	obtained by naturality of the Tate square.  It suffices to show that the induced map $\hofib(f)\to \hofib(f^h)$ is a weak equivalence.  Taking fibers of the labeled maps in the cube above produces a commutative square
	\[
	\begin{tikzcd}
		\hofib(f_h)\ar[r,"\simeq"] \ar[d,"\simeq"] & \hofib(f) \ar[d]
		\\
		\hofib(f_h)\ar[r,"\simeq"] & \hofib(f^h)
	\end{tikzcd}
	\]
	where the horizontal maps are equivalences by the assumptions on the map $f$.  Thus the map $\hofib(f)\to \hofib(f^h)$ is a weak equivalence which completes the proof. 
\end{proof}

\section{Extensions of Mackey functors} \label{sec: equivariant algebra}

    In equivariant stable homotopy theory over a finite group $G$, invariants of $G$-spectra most naturally take values in the category of \emph{Mackey functors}. A Mackey functor is a collection of abelian groups indexed on the subgroups of $G$ with \emph{transfer} and \emph{restriction} maps between them mimicking induction and restriction maps for representation rings. 
    
    In this section, we recall the definition of a Mackey functor and prove an Ext vanishing result for Mackey functors that will be used repeatedly to solve extension problems later in \cref{prop: no extensions}.

\begin{definition}
    A \emph{Mackey functor} $\ul M$ for a finite abelian group $G$ is the data of
    \begin{itemize}

        \item an abelian group $\ul M(G/H)$ for each $H\leq G$,
        \item a \textit{transfer} homomorphism $T^K_H\colon \ul M(G/H)\to \ul M(G/K)$ for $H< K\leq G$, 
        \item a \textit{restriction} homomorphism $R^K_H\colon \ul M(G/K)\to \ul M(G/H)$ for $H< K\leq G$, 
        \item an action of the \textit{Weyl group} $W_KH=K/H$ on $\ul M(G/H)$ for $H<K \leq G$, 
        
    \end{itemize}
    which are subject to axioms enforcing compatibility of the restrictions, transfers, and Weyl actions \cite[\S 1.1.1]{Bouc:GreenFunctors}.

    A map $\ul M\to \ul N$ between Mackey functors is a collection of maps of abelian groups $\ul M(G/H)\to \ul N(G/H)$ for $H\leq G$ which commute with restrictions, transfers, and Weyl group actions.
\end{definition}


    For any Mackey functor $M$, the Weyl group action on $\ul M(G/e)$ turns $\ul M(G/e)$ into a $G$-module. The functor taking a Mackey functor $M$ to the $G$-module $\ul M(G/e)$ admits left and right adjoints
    $$ \begin{tikzcd}
    \Mack_G \arrow[r] & \Mod_{\bbZ G} \arrow[l, bend right, "\ul L"'] \arrow[l, bend left, "\ul R"]
    \end{tikzcd} $$
    which we denote by $\ul L$ and $\ul R$, respectively. For a $G$-module $P$, $\ul L(P)$ is its \textit{orbit Mackey functor} and $\ul R(P)$ is its \textit{fixed point Mackey functor}.  Explicitly, we have
    \[
        \ul L(P)(G/H) = P_H,\quad \mathrm{and} \quad \ul R(P)(G/H) = P^H.
    \]
    There is a natural transformation $N\colon\ul L\Rightarrow \ul R$ which for a $G$-module $P$ at each level $G/H$ is given by the norm map $P_H\to P^H$,
    \[
        x \mapsto \sum\limits_{h\in H} h\cdot x
    \]
    for any $x\in P_H$. This transformation will arise in our computations of Mackey functor-valued group cohomology in \cref{subsec: Cn Mackey functors}.

    In our computations, we will often encounter extensions of Mackey functors with trivial underlying $G$-module by Mackey functors in the image of $R$. We show that all of these extensions must be trivial.

\begin{proposition}\label{prop: no extensions}
   For any $G$-Mackey functor $\ul M$ with $\ul M(G/e)=0$ and any $G$-module $N$, 
   \[
        \Ext^1(\ul M,\ul R(N))=0.
   \]
\end{proposition}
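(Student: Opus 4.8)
The plan is to show directly that every short exact sequence of Mackey functors
\[
0 \to \ul R(N) \xrightarrow{\ i\ } \ul E \xrightarrow{\ q\ } \ul M \to 0
\]
splits, which is equivalent to the vanishing of $\Ext^1(\ul M, \ul R(N))$ in the abelian category $\Mack_G$. Write $U\colon \Mack_G \to \Mod_{\bbZ G}$ for the evaluation functor $U(\ul M) = \ul M(G/e)$, so that the adjunctions recalled above read $\ul L \dashv U \dashv \ul R$. The two structural facts I would isolate at the outset are: (i) $U$ is exact, since it is simultaneously a left and a right adjoint (equivalently, because exactness of a sequence of Mackey functors is detected levelwise); and (ii) the counit of $U \dashv \ul R$ is the canonical identification $\epsilon_N\colon U\ul R(N) = N^e \xrightarrow{\sim} N$, as taking $e$-fixed points is the identity.

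First I would apply the exact functor $U$ to the extension, obtaining a short exact sequence of $G$-modules
\[
0 \to U\ul R(N) \xrightarrow{\ Ui\ } U\ul E \xrightarrow{\ Uq\ } U\ul M \to 0.
\]
The hypothesis $U\ul M = \ul M(G/e) = 0$ then forces $Ui$ to be an isomorphism. Composing its inverse with the counit yields a retraction $\psi := \epsilon_N \circ (Ui)^{-1} \colon U\ul E \to N$ of $Ui$ in $\Mod_{\bbZ G}$; this is the easy, purely levelwise part of the argument.

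Next I would transport $\psi$ back across the adjunction $U \dashv \ul R$ to its adjunct $\hat\psi := \ul R(\psi)\circ \eta_{\ul E} \colon \ul E \to \ul R(N)$, where $\eta$ denotes the unit, and verify that $\hat\psi$ is a genuine retraction of $i$ in $\Mack_G$. Naturality of $\eta$ along the map $i$ gives $\eta_{\ul E}\circ i = \ul R(Ui)\circ \eta_{\ul R(N)}$, and hence
\[
\hat\psi \circ i = \ul R(\psi)\circ \ul R(Ui) \circ \eta_{\ul R(N)} = \ul R\bigl(\psi \circ Ui\bigr)\circ \eta_{\ul R(N)} = \ul R(\epsilon_N) \circ \eta_{\ul R(N)} = \id_{\ul R(N)},
\]
where the third equality uses $\psi \circ Ui = \epsilon_N$ and the last is the triangle identity for $U \dashv \ul R$. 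Thus $i$ is a split monomorphism, the extension is trivial, and $\Ext^1(\ul M, \ul R(N)) = 0$.

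I expect the main obstacle to be precisely the bookkeeping in this last step: one must check that the retraction existing only at level $G/e$ actually promotes to a splitting of Mackey functors, and this is exactly where the full adjunction $U \dashv \ul R$ is used, through the naturality of the unit together with the triangle identity and the identification of the counit $\epsilon_N$. Everything else in the proof reduces to the exactness of $U$ and the vanishing hypothesis on $\ul M(G/e)$.
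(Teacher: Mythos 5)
Your proof is correct and follows essentially the same route as the paper's: both exploit that $\ul M(G/e)=0$ forces the inclusion to be an isomorphism at level $G/e$, then use the adjunction $U\dashv \ul R$ to promote the levelwise inverse to a retraction of Mackey functors. The only difference is presentational -- you verify $\hat\psi\circ i=\id$ explicitly via naturality of the unit and the triangle identity, where the paper instead invokes the uniqueness clause of the adjunction bijection to conclude that an endomorphism of $\ul R(N)$ which is the identity at level $G/e$ is the identity.
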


\begin{proof}
    We will show that any short exact sequence of Mackey functors
    \[
        0\to \ul R(N)\xrightarrow{f} \ul P\to \ul M\to 0
    \]
    must be split.  Note that since $\ul M(C_n/e)=0$, we must have $f_{C_n/e}\colon \ul R(N)(C_n/e)\to \ul P(C_n/e)$ is an isomorphism.  Let $\varphi\colon \ul P(C_n/e)\to \ul R(N)(C_n/e)$ denote the inverse.
    
    Since $\ul R$ is the right adjoint to evaluation at $C_n/e$, the map $\varphi$ uniquely determines a map of Mackey functors $\Phi\colon \ul P\to \ul R(N)$ which is given by $\varphi$ at level $C_n/e$.  Thus the composite $\Phi\circ f$ is the identity on level $C_n/e$, and therefore must be the identity on the entire Mackey functor. Thus every such short exact sequence is split and $\Ext^1(\ul M,\ul R(N))=0$.
\end{proof}

A dual argument can be applied to see that $\Ext^1(L(N),M)=0$ for any $G$-module $N$. 


\section{The Galois action on $K$-theory} \label{sec: Galois action}

In her thesis \cite{Mona:Grings}, Merling defines the algebraic $K$-theory of rings with $G$-action.  This invariant assigns a genuine $G$-spectrum $K_G(R)$ to every ring $R$ with action of a finite group $G$.  The following theorem of Merling tells us how $K_G$ behaves when our $G$-rings arise from Galois extensions; we refer the reader to Merling's paper \cite{Mona:Grings} for proofs and further discussion.

\begin{theorem}[{\cite[Theorem 1.2, (4) and (6)]{Mona:Grings}}]\label{thm: fixed points of K}
	For a finite Galois extension of fields $E/F$ with $G = \mathrm{Gal}(E/F)$ there are equivalences of spectra
	\[
	K(F)\simeq K_G(E)^{G}
	\]
	where $K(F)$ is the non-equivariant algebraic $K$ theory of $F$.
\end{theorem}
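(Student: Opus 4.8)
The plan is to unwind the construction of the genuine $G$-spectrum $K_G(E)$, identify its categorical $G$-fixed points with the algebraic $K$-theory of a category of equivariant modules, and then apply classical Galois descent to recognize that category as the category of $F$-modules. (The statement itself is imported from Merling's thesis; what follows is how I would reconstruct the argument.)

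First I would recall that $K_G(R)$ is produced by feeding the category $\cP(R)$ of finitely generated projective $R$-modules, viewed as a category with $G$-action through the $G$-action on $R$, into an equivariant infinite loop space machine. The structural input I need is that for a permutative category $\cC$ with $G$-action, the genuine $G$-fixed points of the associated $G$-spectrum are modeled by the $K$-theory of the homotopy fixed-point category of the $G$-action on $\cC$. For the $G$-action on $\cP(E)$ given by transport of structure along $g\colon E\to E$, this homotopy fixed-point category is equivalent to the strict category of finitely generated projective $E$-modules $M$ equipped with a semilinear $G$-action, i.e.\ additive automorphisms $\sigma_g\colon M\to M$ with $\sigma_g(em)=g(e)\,\sigma_g(m)$ satisfying the evident cocycle condition, with $E$-linear $G$-equivariant maps as morphisms. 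This yields an equivalence $K_G(E)^G\simeq K\big(\cP(E)^{hG}\big)$.

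Second I would invoke Galois descent. For a finite Galois extension $E/F$ with $G=\Gal(E/F)$, the functor $M\mapsto M^G$ and the functor $N\mapsto E\otimes_F N$ are mutually inverse equivalences between the category of finitely generated projective $E$-modules with semilinear $G$-action and the category $\cP(F)$. The content making these an equivalence is exactly the Galois condition, encoded by the isomorphism $E\otimes_F E\cong \prod_{g\in G}E$, which forces the unit and counit of the adjunction to be isomorphisms. Applying algebraic $K$-theory to this equivalence of categories and composing with the fixed-point identification from the first step produces the desired equivalence $K(F)\simeq K_G(E)^G$.

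The main obstacle is the first step: rigorously identifying the genuine fixed points of the equivariant $K$-theory spectrum with the $K$-theory of the homotopy fixed-point category. This requires the machinery of equivariant infinite loop space theory (Shimakawa, Guillou--May) together with a comparison between homotopy and strict fixed points of the $G$-category $\cP(E)$, and it is precisely here that Merling's careful construction does the work. By contrast, the Galois descent equivalence is classical and essentially formal. Accordingly, I would import the fixed-point identification from Merling's framework and supply only the descent step, checking that the equivalence of module categories respects the exact/permutative structure so that it descends to $K$-theory.
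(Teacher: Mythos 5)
The paper offers no proof of this statement---it is imported directly from Merling's Theorem 1.2, parts (4) and (6)---and your proposal does essentially the same thing, deferring the hard genuine-fixed-points identification to Merling's equivariant infinite loop space machinery and supplying only the classical Galois descent step. Your two-step reconstruction (genuine fixed points as the $K$-theory of finitely generated projective $E$-modules with semilinear $G$-action, then descent to $\cP(F)$ via $E\otimes_F E\cong\prod_{g\in G}E$) accurately reflects the structure of the cited argument, so there is nothing to correct.
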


We are interested in Merling's construction when the ring $R = k$ is a finite field which is a finite Galois extension of a finite field with $G$ the Galois group. Specifically, let $p$ be a prime, let $q = p^r$ for some $r\geq 1$, and let $\bbF_{q}$ be the field with $q$ elements. Quillen showed $\bbF_q$ has its higher $K$-groups concentrated in odd degrees, where we have
\[ 
K_{2i-1}(\bbF_q) \cong \bbZ/(q^i-1)
\]
for $i>0$ \cite[Theorem 8(i)]{Quillen:finite-field}. We will consider the case where $k=\bbF_{q^n}$ is a degree $n$ extension of $\bbF_q$. The Galois group $G=\Gal(k/\bbF_q)$ is cyclic of order $n$, generated by the Frobenius automorphism $\varphi$ specified by $\varphi(x)=x^q$ of $k$. Quillen furthermore calculated the action of $G$ on the higher $K$-groups.

\begin{theorem}[{\cite[Theorem 8(iii)]{Quillen:finite-field}}] \label{theorem: Quillen}
	Let $k = \bbF_{q^n}$ and let $G = \mathrm{Gal}(k/\bbF_q)$.  For $i>0$, the action of the Frobenius automorphism $\varphi$ on the group
	$$ K_{2i-1}(k) = \bbZ/(q^{ni}-1) $$
	is multiplication by $q^i$.
\end{theorem}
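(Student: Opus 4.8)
The plan is to read off the Galois action from Quillen's description of the $K$-theory of a finite field as the homotopy fiber of an Adams operation, together with the fact that the Brauer lift intertwines the Frobenius with the Adams operation $\psi^q$. Write $ku$ for connective complex $K$-theory. Quillen's computation produces the Brauer lift $b\colon K(\bbF_{q^n})\to ku$, which, after completion away from $p$, fits into a homotopy fiber sequence
\[
    K(\bbF_{q^n})\xrightarrow{\ b\ } ku \xrightarrow{\ \psi^{q^n}-1\ } ku.
\]
Completing away from $p$ loses no information here, since $K_{2i-1}(\bbF_{q^n})=\bbZ/(q^{ni}-1)$ is finite of order prime to $p$. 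As $\pi_{2i}(ku)\cong\bbZ$ is generated by the $i$-th power of the Bott class, $\pi_{2i-1}(ku)=0$, and $K_{2i}(\bbF_{q^n})=0$ for $i>0$, the long exact sequence of the fibration degenerates to a short exact sequence
\[
    0\to \pi_{2i}(ku)\xrightarrow{\ \psi^{q^n}-1\ }\pi_{2i}(ku)\xrightarrow{\ \partial\ } K_{2i-1}(\bbF_{q^n})\to 0,
\]
so the connecting map $\partial$ identifies $K_{2i-1}(\bbF_{q^n})$ with $\coker(\psi^{q^n}-1)=\bbZ/(q^{ni}-1)$, recovering the group in \cref{theorem: Quillen}.

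First I would establish the key compatibility $b\circ\varphi_* = \psi^q\circ b$, which is the crux of the argument and the only place the Brauer lift is used in an essential way. Given a representation $\rho$ over $\bbF_{q^n}$, applying the Frobenius $\varphi\colon x\mapsto x^q$ to its matrix entries raises every eigenvalue of every $\rho(g)$ -- a prime-to-$p$ root of unity -- to its $q$-th power. Under the Brauer lift, which lifts these eigenvalues along a fixed embedding of roots of unity into $\bbC^\times$, this is exactly the effect of $\psi^q$, since $\psi^q$ raises the eigenvalues of a virtual complex representation to the $q$-th power. Hence $b$ intertwines the Galois action $\varphi_*$ with $\psi^q$. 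Because Adams operations commute, $\psi^q$ also commutes with $\psi^{q^n}-1$, so $(\varphi_*,\psi^q,\psi^q)$ is a self-map of the entire fiber sequence above.

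Granting this, the theorem follows formally. The self-map induces a commuting square relating $\partial$ to $\varphi_*$ on $K_{2i-1}(\bbF_{q^n})$ and to $\psi^q$ on $\pi_{2i}(ku)$; since $\partial$ is surjective and $\psi^q$ acts on $\pi_{2i}(ku)\cong\bbZ$ by multiplication by $q^i$, the induced action of $\varphi$ on $K_{2i-1}(\bbF_{q^n})\cong\bbZ/(q^{ni}-1)$ is multiplication by $q^i$, as claimed. The main obstacle is thus entirely concentrated in the intertwining $b\circ\varphi_*=\psi^q\circ b$; the remaining steps are routine. As an unconditional sanity check, the case $i=1$ is elementary: $K_1(\bbF_{q^n})=\bbF_{q^n}^{\times}$ and $\varphi$ is the $q$-th power map, which is multiplication by $q=q^1$ on $\bbZ/(q^n-1)$. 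One could alternatively argue prime by prime through \'etale cohomology, identifying the $\ell$-primary part of $K_{2i-1}(\bbF_{q^n})$ for $\ell\neq p$ with the Frobenius coinvariants of the Tate twist $\bbZ_\ell(i)$, on which $\varphi$ acts by $q^i$; this reproduces the same answer.
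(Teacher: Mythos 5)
This statement is imported from Quillen (the paper cites \cite[Theorem 8(iii)]{Quillen:finite-field} and gives no proof of its own), and your argument is essentially Quillen's original one: identify $K(\bbF_{q^n})$ with the fiber of $\psi^{q^n}-1$ on $ku$ via the Brauer lift, check the intertwining $b\circ\varphi_*=\psi^q\circ b$ by the effect of Frobenius on eigenvalues, and read off the action on the cokernel; this is correct. The only caveats are cosmetic --- the Brauer lift naturally lands in a (profinitely completed) $ku$ and the fiber sequence really identifies $K(\bbF_{q^n})$ with the connective cover of the fiber of $\psi^{q^n}-1$ --- neither of which affects degrees $2i-1>0$.
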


In this section, we use these two results to preform the calculations underlying our main results. Our goal is to explicitly compute the $RO(G)$-graded homotopy Mackey functors of the terms appearing in the norm cofiber sequence for $K_G(k)$. In \cref{subsec: Cn Mackey functors}, we enumerate all the possible Mackey functors appearing on the $\ul E^2$-pages of spectral sequences converging to these terms, and in \cref{subsec: norm cofib computation}, we analyze these spectral sequences.

\subsection{Galois cohomology Mackey functors} \label{subsec: Cn Mackey functors}

In this section, we determine the Mackey functors appearing on the $\ul E^2$-pages of the homotopy orbit and homotopy fixed point spectral sequences for $K_G(k)$.  The $\ul E^2$-pages of these spectral sequences are recalled in \cref{subsec: norm cofib computation} below.  We first extend \cref{theorem: Quillen} to a compuation of the $G$-modules $\pi^e_V K_G(k)$ for $V\in RO(G)$, and then we obtain Mackey functors by applying the functors
\[
    \ul L,\ul R\colon \Mod_{\bbZ G}\to \Mack_G
\]
defined in the last subsection.

Since these functors are given by taking orbits and fixed points respectively, they form the input for the filtration-zero line of the homotopy orbit and fixed point spectral sequences.  The remainder of the input for these spectral sequences amounts to computing the higher group (co)homology of these $G$-modules. Since $G$ is cyclic, its (co)homology with coefficients in any $G$-module $P$ is 2-periodic, and is fully determined by the norm map $N\colon P_G\to P^G$. Specifically, we have
\[
    H_s(G;P) \cong \begin{cases}
        P_G & s=0, \\
        \coker N_P & \text{$s>0$, $s$ odd,} \\
        \ker N_P & \text{$s>0$, $s$ even,}
    \end{cases} \qquad
    H^s(G;P) \cong \begin{cases}
        P^G & s=0, \\
        \ker N_P & \text{$s>0$, $s$ odd,} \\
        \coker N_P & \text{$s>0$, $s$ even.}
    \end{cases}
\]
Thus, we need to compute the kernel and cokernel of the norm transformation $N\colon \ul L\Rightarrow \ul R$.

Recall that $K_{G}(k)^e\simeq K(k)$ \cite[Theorem 6.4]{Mona:Grings}, so we have an isomorphism of abelian groups $\pi^e_V K_G(k)\cong K_{|V|}(k)$. The $G$-action on the group $\pi^e_V K_G(k)$ is determined by Quillen's computation \cref{theorem: Quillen} and the $G$-action on the representation sphere $S^V$. We introduce terminology to classify the possible $G$-actions on representation spheres.

\begin{notation}
    We call a $G$-representation $V$ \emph{orientation-preserving} (resp. \emph{reversing}) if the induced map on $S^V$ is orientation-preserving (resp. reversing.). By extending linearly to virtual representations, this gives a well defined group homomorphism $RO(G)\to \bbZ/2$ where the kernel is all orientation preserving virtual representations. Note that if $G$ is a cyclic group of odd order, all representations are orientation-preserving.
\end{notation}

An orientation-reversing action of $G$ on $S^V$ introduces a ``twist'' of $-1$ to the $G$-action on $\pi^e_V K_G(k)$. We use the notation $\bbZ^\sigma$ to denote the sign representation, i.e., the group $\bbZ$ with $G$ acting by $-1$. For a $G$-module $M$, we write 
$$ M^\sigma = M \otimes \bbZ^\sigma $$
for $M$ twisted by the sign action. We write $K_{2i-1}(k)$ for the $G$-module $\bbZ/(q^{ni}-1)$ with the implicit $G$-action by $q^i$ as in \cref{theorem: Quillen}.

\begin{lemma} \label{lemma:UnderlyingGmodules}
Let $k$ be any finite field.  For $V\in RO(G)$, $G$-modules $\pi^e_V K_G(k)$ are given by
$$ \pi^e_V K_G(k) \cong \begin{cases} 
\bbZ & \textrm{$|V|=0$, $V$ orientation-preserving,} \\
\bbZ^\sigma & \textrm{$|V|=0$, $V$ orientation-reversing,} \\
K_{2i-1}(k) & \textrm{$|V|=2i-1>0$, $V$ orientation-preserving,} \\
K_{2i-1}(k)^\sigma & \textrm{$|V|=2i-1>0$, $V$ orientation-reversing,} \\
0 & \textrm{otherwise.}
\end{cases} $$
\end{lemma}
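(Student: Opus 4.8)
The plan is to separate the computation into two pieces: the underlying abelian group, and the residual $G$-action (the Weyl action of $W_G e = G$) on it. For the underlying group I would use the equivalence of underlying spectra $K_G(k)^e\simeq K(k)$ recalled above, together with the non-equivariant identification $\res^G_e S^V\simeq S^{|V|}$, to obtain $\pi^e_V K_G(k)\cong \pi_{|V|}K(k)=K_{|V|}(k)$ as abelian groups. Quillen's computation of $K_*(\bbF_{q^n})$ then gives the group structure directly: $K_0(k)\cong\bbZ$ and $K_{2i-1}(k)\cong\bbZ/(q^{ni}-1)$ for $i>0$, while $K_m(k)=0$ for $m>0$ even and, by connectivity of $K(k)$, also for $m<0$. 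This already establishes the ``otherwise'' case and identifies the abelian groups in the remaining four cases.

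The heart of the argument is pinning down the Weyl action. As $G$ is cyclic generated by the Frobenius $\varphi$, it suffices to describe how $\varphi$ acts. Writing $\pi^e_V K_G(k)\cong\pi^e_0(\Sigma^{-V}K_G(k))$ and passing to underlying spectra, $\varphi$ acts diagonally on the smash product $S^{-V}\wedge K_G(k)$, so on $\pi_0$ its action is the product of two contributions: (i) the action of $\varphi$ on $S^{-V}$, which non-equivariantly is a self-map of $S^{-|V|}$ of degree $\pm1$ --- equal to $+1$ exactly when $V$ is orientation-preserving and $-1$ when $V$ is orientation-reversing; and (ii) the Galois action of $\varphi$ on $K(k)$ coming from $K_G(k)$. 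For (ii) I would invoke \cref{theorem: Quillen}, which says $\varphi$ acts on $K_{2i-1}(k)$ by multiplication by $q^i$, together with the elementary fact that $\varphi$ acts trivially on $K_0(k)\cong\bbZ$, since any field automorphism fixes the class of the rank-one free module.

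Combining (i) and (ii): when $|V|=2i-1>0$ the generator $\varphi$ acts by $q^i$ in the orientation-preserving case and by $-q^i$ in the orientation-reversing case, which by the definition of the sign twist $(-)^\sigma$ are precisely the modules $K_{2i-1}(k)$ and $K_{2i-1}(k)^\sigma$; when $|V|=0$ the trivial Galois action leaves only the orientation sign, yielding $\bbZ$ or $\bbZ^\sigma$. I expect the main obstacle to be giving an honest justification of the factorization of the residual action into the orientation sign and the Galois action --- that is, checking that regrading from the integer degree $|V|$ to the representation grading $V$ twists the $W_G e$-action by exactly the orientation character. This is a standard feature of $RO(G)$-graded homotopy, but it is the one spot where one must track carefully how $\varphi$ acts on $S^V$ rather than silently identifying the two grading conventions.
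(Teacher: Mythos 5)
Your proposal is correct and follows essentially the same route as the paper: identify the underlying abelian group via $K_G(k)^e\simeq K(k)$ and Quillen's computation, then factor the residual $G$-action (the paper phrases it as the conjugation action on maps out of $S^V$) into the orientation character of $S^V$ and the Galois action from \cref{theorem: Quillen}, with the $|V|=0$ case handled by triviality of the action on $K_0(k)\cong\bbZ$. Your extra care about justifying the factorization of the Weyl action is a reasonable refinement of what the paper states in one line, not a different argument.
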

\begin{proof}
    The action on the homotopy groups is given by the conjugation action on maps, where $G$ acts on $S^V$ by sign when $V$ is orientation-reversing and trivially when $V$ is orientation-preserving. 
    
    In virtual dimension $|V|=2i-1$ for $i>0$, the claimed computation follows from \cref{theorem: Quillen}. In virtual dimension $|V|=0$, the action of $G$ on $K_0(k)\cong\bbZ$ is necessarily trivial, as there are no nontrivial ring automorphisms of $\bbZ$. Therefore, the $G$-module $\pi_V^e K_G(k)$ is either $\bbZ$ or $\bbZ^\sigma$, depending on whether $V$ is orientation-preserving or orientation-reversing.
\end{proof}

Writing $M$ for any of the $G$-modules in \cref{lemma:UnderlyingGmodules}, the rest of this section is devoted to computing the Mackey functors in the exact sequence:
\[
    0\to \ker(N)\to \ul L(M)\xrightarrow{N} \ul R(M)\to \coker(N)\to 0.
\]
We treat the four cases of \cref{lemma:UnderlyingGmodules} in order. A summary of these calculations can be found in \cref{table: symbols}.

\subsubsection{The trivial $G$-module $\bbZ$.} 

We denote by 
\begin{align*} 
    \ul R(\bbZ) & = \ul\bbZ   = \boxempty, \\
    \ul L(\bbZ) & = \boxslash,
\end{align*} 
the fixed points and orbit Mackey functors, respectively, of trivial $C_n$-module $\bbZ$. We define the Mackey functor $\circ$ as the cokernel of the norm map. This is depicted by the short exact sequence of Mackey functors

$$ \begin{tikzcd}[row sep = small]
& 0 \arrow[r] & \boxslash \arrow[r] & \boxempty \arrow[r] & \circ \arrow[r] & 0 \\
C_n/C_a: & 0 \arrow[r] \arrow[dd, bend right = 20] & \bbZ \arrow[r, "a"] \arrow[dd, bend right = 20, "a/b"'] & \bbZ \arrow[r, two heads] \arrow[dd, bend right = 20, "1"'] & \bbZ/a \arrow[r] \arrow[dd, bend right = 20, "1"'] & 0 \arrow[dd, bend right = 20] \\
& & & & & \\
C_n/C_b: & 0 \arrow[r] \arrow[uu, bend right = 20] & \bbZ \arrow[r, "b"] \arrow[uu, bend right = 20, "1"'] & \bbZ \arrow[r, two heads] \arrow[uu, bend right = 20, "a/b"'] & \bbZ/b \arrow[r] \arrow[uu, bend right = 20, "a/b"'] & 0. \arrow[uu, bend right = 20] \\
\end{tikzcd} $$

\subsubsection{The sign representation $\bbZ^\sigma$.} 

In the case where $n$ is even, there is the $C_n$-module $\bbZ^\sigma$. We denote by 
\begin{align*} 
    \ul R(\bbZ^\sigma) & = \olboxempty, \\
    \ul L(\bbZ^\sigma) & = \olboxslash,
\end{align*} 
the fixed points and orbit Mackey functors, respectively. A subgroup $C_m\leq C_n$ acts nontrivially on $\bbZ^\sigma$ if and only if its index $n/m$ is odd. The norm map $L(\bbZ^\sigma)\to R(\bbZ^\sigma)$ has both a nontrivial kernel $\bullet$ and cokernel $\olcirc$, which have components given by:
\begin{align*} 
\olboxslash(C_n/C_m) & \cong \begin{cases}
\bbZ/2 & \textrm{$n/m$ odd} \\
\bbZ^\sigma & \textrm{$n/m$ even,} 
\end{cases}
&
\olboxempty(C_n/C_m) & \cong \begin{cases}
0 & \textrm{$n/m$ odd} \\
\bbZ^\sigma & \textrm{$n/m$ even,}
\end{cases} \\
\bullet(C_n/C_m) & \cong \begin{cases}
\bbZ/2 & \textrm{$n/m$ odd} \\
0 & \textrm{$n/m$ even,} 
\end{cases}
&
\olcirc(C_n/C_m) & \cong \begin{cases}
0 & \textrm{$n/m$ odd} \\
(\bbZ/m)^\sigma & \textrm{$n/m$ even.}
\end{cases}
\end{align*}

\subsubsection{Higher $K$-groups with the Galois action.} 

Recall that the $G$-module $K_{2i-1}(k)$ is given by $\bbZ/(q^{ni}-1)$ with $G$ acting by $q^i$.

\begin{lemma} \label{lemma:ominus}
    Let $k= \bbF_{q^n}$ and $G = C_n$. The norm map 
    \[
        N\colon \ul L(K_{2i-1}(k)) \to \ul R(K_{2i-1}(k))
    \]
    is an isomorphism of $C_n$-Mackey functors, and thus has trivial kernel and cokernel.
\end{lemma}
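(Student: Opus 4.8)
The plan is to verify that the norm transformation $N$ is an isomorphism at each level $C_n/C_m$, since $N$ is a map of Mackey functors and such maps are isomorphisms precisely when they are isomorphisms on every level. Fix a divisor $m \mid n$ and write $C_m = \langle \varphi^{n/m}\rangle \leq C_n$. Abbreviating $M = K_{2i-1}(k) = \bbZ/(q^{ni}-1)$, the generator $\varphi^{n/m}$ of $C_m$ acts on $M$ by multiplication by the integer $s := q^{in/m}$, and indeed $s^m = q^{ni} \equiv 1$ modulo $q^{ni}-1$, as it must since $\varphi^{n/m}$ has order $m$.

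First I would compute the two sides at this level. Set $d := q^{in/m} - 1 = s - 1$. Because $m \mid n$, the exponent $in/m$ divides $ni$, so $d$ divides $q^{ni}-1$; consequently multiplication by $s-1$ on $\bbZ/(q^{ni}-1)$ is multiplication by $d$, with image $d\,\bbZ/(q^{ni}-1)$ and kernel $\tfrac{q^{ni}-1}{d}\,\bbZ/(q^{ni}-1)$. Hence the coinvariants
\[
    \ul L(M)(C_n/C_m) = M_{C_m} = \bbZ/(q^{ni}-1)\big/\, d\,\bbZ/(q^{ni}-1)
\]
and the invariants
\[
    \ul R(M)(C_n/C_m) = M^{C_m} = \ker\!\big(d\colon \bbZ/(q^{ni}-1)\to\bbZ/(q^{ni}-1)\big) = \tfrac{q^{ni}-1}{d}\,\bbZ/(q^{ni}-1)
\]
are both cyclic of order $d$.

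The crux is then to pin down the norm on these groups. At the level $C_n/C_m$ the map $N$ is multiplication by the integer $\sum_{j=0}^{m-1} s^j = \tfrac{s^m-1}{s-1} = \tfrac{q^{ni}-1}{d}$. Since $M_{C_m}$ is generated by the class of $1$ and $M^{C_m}$ is generated by $\tfrac{q^{ni}-1}{d}$, the norm carries a generator to a generator; being a surjection between finite cyclic groups of the same order $d$, it is therefore an isomorphism. As $m$ ranges over the divisors of $n$ this gives a levelwise, hence Mackey functor, isomorphism, and in particular $\ker N = \coker N = 0$. Phrased invariantly, I am checking that each $K_{2i-1}(k)$ is a cohomologically trivial $C_m$-module, i.e. that its Tate cohomology vanishes. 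The only points requiring care are the divisibility $d \mid q^{ni}-1$, which forces the invariants and coinvariants to be cyclic of the common order $d$, and the identity $\sum_{j} s^j = \tfrac{q^{ni}-1}{d}$, which places the image of the generator exactly on the generator of the invariants; once these are established there is no further obstacle.
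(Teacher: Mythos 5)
Your proof is correct and follows essentially the same route as the paper: identify the coinvariants as the quotient by multiplication by $q^{ni/m}-1$ and the invariants as the kernel of that same multiplication, observe both are cyclic of order $q^{ni/m}-1$, and check that the norm, being multiplication by $\sum_j q^{jni/m} = \tfrac{q^{ni}-1}{q^{ni/m}-1}$, sends generator to generator. The only cosmetic difference is that you make the divisibility $q^{ni/m}-1 \mid q^{ni}-1$ explicit, which the paper leaves implicit.
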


    We denote the resulting Mackey functor by $\varominus^i$, defined as
    \[ 
        \varominus^i= \ul L(K_{2i-1}(k))\cong \ul R(K_{2i-1}(k)).
    \]
    It is specified by $\varominus^i(C_n/C_m) = \bbZ/(q^{ni/m}-1)$ with transfers and restrictions depicted in the Lewis diagram
    $$ \begin{tikzcd}[row sep = small]
    C_n/C_a\colon & \bbZ/(q^{ni/a}-1) \arrow[dd, bend right = 20, "\sum\limits_{j=0}^{a-1} q^{jni/a}"']  \\ \\
    C_n/C_b\colon & \bbZ/(q^{ni/b}-1) \arrow[uu, bend right = 20, "1"']  \\
    \end{tikzcd} $$
    The Weyl group action of $C_n/C_m$ on $\bbZ/(q^{ni/m}-1)$ is given by multiplication by $q^{ni/m}$.

\begin{proof}[Proof of \cref{lemma:ominus}]
    The coinvariants and invariants of $K_{2i-1}$ relative to the subgroup \[ C_m = \langle q^{n/m}\rangle \leq C_n, \] are computed as follows.

    The coinvariants $K_{2i-1}(k)_{C_m}$ are obtained by adding the relations $q^{ni/m}x=x$ for all $x$ to the group $\bbZ/(q^{ni}-1)$. In other words, this is the quotient by the subgroup generated by $q^{ni/m}-1\in\bbZ/(q^{ni}-1)$. This quotient is cyclic of order $q^{ni/m}-1$, generated by the coset of $1\in\bbZ/(q^{ni}-1)$.

    The invariants $K_{2i-1}(k)^{C_m}$ are the subgroup of $\bbZ/(q^{ni}-1)$ of elements $x$ such that $q^{ni/m}x=x$. In other words, it is the subgroup annihilated by $q^{ni/m}-1$, which is cyclic of order $q^{ni/m}-1$ and is generated by the element 
    \[ 
        \frac{q^{ni}-1}{q^{ni/m}-1} \in \bbZ/(q^{ni}-1). 
    \]

    Thus, we see that the invariants and coinvariants are abstractly isomorphic.  Moreover, the norm map is multiplication by
    \[ 
        N = 1 + q^{ni/m} + q^{2ni/m} + \cdots + q^{(m-1)ni/m} = \frac{q^{ni}-1}{q^{ni/m}-1}.
    \]
    Under the above identifications, this takes the generator of $K_{2i-1}(k)_{C_m}$ to a generator of $K_{2i_1}(k)^{C_m}$, from which the claim follows.
\end{proof}

\subsubsection{Higher $K$-groups with the twisted Galois action.} 

Lastly, we consider the norm map of the twisted $G$-modules $K_{2i-1}(k)^{\sigma} = K_{2i-1}(k)\otimes \mathbb{Z}^{\sigma}$ in the case where $n$ is even. This case is almost identical to the last, but with $(-1)^{n/m}q^{ni/m}$ in place of $q^{ni/m}$.  We leave the details to the reader.

\begin{lemma} \label{lemma:NormOplus}
Recall that $C_n$ acts on $K_{2i-1}(k)^\sigma \cong \bbZ/(q^{ni}-1)$ by $-q^{i}$.  For $C_m\leq C_n$ we have
\begin{enumerate}
\item the coinvariants $(K_{2i-1}(k)^\sigma)_{C_m}$ are cyclic of order $(-1)^{n/m}q^{ni/m}-1$,
\item the invariants $(K_{2i-1}(k)^\sigma)^{C_m}$ are cyclic of order $(-1)^{n/m}q^{ni/m}-1$,
\item the norm map 
$$ N\colon (K_{2i-1}(k)^\sigma)_{C_m} \to (K_{2i-1}(k)^\sigma)^{C_m} $$ 
is an isomorphism.  
\end{enumerate}
\end{lemma}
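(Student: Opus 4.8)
The plan is to mirror the proof of \cref{lemma:ominus} essentially verbatim, the only change being that the relevant group element now acts by a \emph{twisted} unit. Fix a subgroup $C_m = \langle\varphi^{n/m}\rangle \le C_n$. Since $\varphi$ acts on $K_{2i-1}(k)^\sigma \cong \bbZ/(q^{ni}-1)$ by $-q^i$, its power $\varphi^{n/m}$, which generates $C_m$, acts by multiplication by
\[
    \beta := (-1)^{n/m}q^{ni/m}.
\]
The single computation underpinning everything is the identity $\beta^m = (-1)^n q^{ni} = q^{ni}$, valid precisely because $n$ is even. In particular $\beta - 1$ divides $\beta^m - 1 = q^{ni}-1$, which is exactly the feature of $q^{ni/m}-1$ that made the untwisted argument work.

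With this in hand, I would first compute the coinvariants. These form the quotient of $\bbZ/(q^{ni}-1)$ by the subgroup generated by $\beta - 1$; since $\beta - 1$ divides $q^{ni}-1$, the quotient is cyclic of order $|\beta - 1|$, generated by the class of $1$. Next, the invariants are the subgroup of $\bbZ/(q^{ni}-1)$ annihilated by $\beta - 1$, which is cyclic of the same order $|\beta - 1|$ and is generated by $\tfrac{q^{ni}-1}{\beta - 1}$. This yields parts (a) and (b), with the understanding that the displayed quantity $(-1)^{n/m}q^{ni/m}-1$ is read up to sign: it equals $q^{ni/m}-1$ when $n/m$ is even and $-(q^{ni/m}+1)$ when $n/m$ is odd, so that $\bbZ/\bigl((-1)^{n/m}q^{ni/m}-1\bigr)$ names the correct cyclic group in both cases.

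For part (c), the norm map sends $x$ to $\sum_{j=0}^{m-1}\varphi^{jn/m}\cdot x = \bigl(\sum_{j=0}^{m-1}\beta^j\bigr)x$, and the geometric sum telescopes to $\tfrac{\beta^m - 1}{\beta - 1} = \tfrac{q^{ni}-1}{\beta - 1}$ using the identity above. Under the identifications just made, this carries the generator $1$ of the coinvariants to the generator $\tfrac{q^{ni}-1}{\beta - 1}$ of the invariants; being a surjection between cyclic groups of equal order, it is an isomorphism.

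The underlying arithmetic is entirely routine, so the one point that I expect to require genuine care — the main obstacle — is the sign bookkeeping when $n/m$ is odd. In that regime $n$ even forces $m$ even, and one must verify that $\beta - 1 = -(q^{ni/m}+1)$ nonetheless divides $q^{ni}-1$ and that the resulting order is the absolute value $q^{ni/m}+1$ rather than the literal (negative) expression. Isolating the clean identity $\beta^m = q^{ni}$ at the outset is precisely what organizes these signs and lets the proof of \cref{lemma:ominus} transfer without further modification.
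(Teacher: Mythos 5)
Your proposal is correct and is exactly the argument the paper intends: the paper explicitly declares this case ``almost identical'' to \cref{lemma:ominus} with $(-1)^{n/m}q^{ni/m}$ in place of $q^{ni/m}$ and leaves the details to the reader, and your write-up supplies precisely those details. The key identity $\beta^m = q^{ni}$ (using $n$ even) and the sign bookkeeping when $n/m$ is odd are handled correctly.
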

Thus we have an isomorphism of Mackey functors $\ul L(K_{2i-1}(k)^{\sigma})\cong \ul R(K_{2i-1}(K)^{\sigma})$ and we write $\varoplus^i$ for this Mackey functor.

\subsubsection{Summary} \label{subsubsec: summary of Mackey functors}

The results of the computations above are summarized in \cref{table: symbols}.  For space reasons, we abbreviate ``orientation preserving'' (resp. reversing) to o.p. (resp. o.r.). The symbols for $C_n$-Mackey functors were chosen based on the following conventions:
\begin{itemize}
    \item A square symbol indicates the underlying abelian group is $\bbZ$.
    \item A circular symbol indicates each level is a finite cyclic group.
    \item A horizontal bar indicates the Weyl groups at even index levels act by sign.
\end{itemize}

\begin{table}[!htbp]
	\renewcommand{\arraystretch}{1.2}
    \centering
    \begin{tabular}{| c | c | c | c | c | c |}
        \hline 
            & ${K}_V(k)$ & $\ul L({K}_V(k))$ & $\ul R({K}_V(k))$ & $\ker(N)$ & $\coker(N)$ \\
        \hline 
        $|V|=0$, o.p. & $\bbZ$ & $\boxslash$ & $\boxempty$ & $0$ & $\circ$ \\
        \hline
        $|V|=0$, o.r. & $\bbZ^{\sigma}$ & $\olboxslash$ & $\olboxempty$ & $\bullet$ & $\olcirc$ \\
        \hline
        $|V|=2i-1>0$, o.p. & $\bbZ/(q^{ni}-1)$ & $\varominus^i$ & $\varominus^i$ & $0$ & $0$ \\
        \hline
        $|V|=2i-1>0$, o.r. & $\bbZ^{\sigma}/(q^{ni}-1)$ & $\varoplus^i$ & $\varoplus^i$ & $0$ & $0$ \\
        \hline
    \end{tabular}
    \caption{The $G$-modules $K_V(k)$ and associated Mackey functors} \label{table: symbols}
\end{table}

\subsection{The norm cofiber sequence}\label{subsec: norm cofib computation}

In this section, we determine the $RO(G)$-graded homotopy Mackey functors of the $G$-spectra in the norm cofiber sequence
$$ \begin{tikzcd} K_G(k)_h \arrow[r,"N"] & K_G(k)^h \arrow[r] & K_G(k)^t, \end{tikzcd} $$
which forms the bottom row of the Tate diagram.  The main result is \cref{prop: Tate equivalence}, which says that the map $K_G(k)^t\to \HZ^t$ is a weak equivalence of $G$-spectra.

Our analysis is centered around the $RO(G)$-graded, Mackey functor-valued homotopy orbit spectral sequence (HOSS)
\[ 
    \ul E^2_{s,V} = \ul H_s(G;\pi^e_V K_G(k)) \Rightarrow \underline\pi_{s+V} K_G(k)_h, 
\]
and homotopy fixed point spectral sequence (HFPSS) \cite[\S 2.2]{BBHS} 
\[ 
    \ul E^2_{s,V} = \ul H^{-s}(G;\pi^e_V K_G(k)) \Rightarrow \underline\pi_{s+V} K_G(k)^h.
\]
The group homology Mackey functors $\ul H_*(G;M)$ appearing on the $\ul E^2$-page are given by
\[
    \ul H_*(G;M)(G/K) = H_*(K; \res^G_K M)
\]
for a $G$-module $M$. The group cohomology Mackey $\ul H^*(G;M)$ functors are defined analogously. The differentials have grading
\[ 
    d^r \colon \ul E^r_{s,V}\to \ul E^r_{s-r,V+r-1}.
\]

\begin{proposition}
The $RO(G)$-graded homotopy Mackey functors of $K_G(k)_h$ are given on orientation preserving representations $V$ by
\[
    \ul\pi_V K_G(k)_h \cong
    \begin{cases}
        \boxslash & |V|=0, \\
        \varominus^i \oplus \circ & |V|=2i-1>0, \\
        0 & \textrm{otherwise},
    \end{cases}
\]
and on orientation reversing $V$ by
\[
    \ul\pi_V K_G(k)_h \cong
    \begin{cases}
        \olboxslash & |V|=0 \\
        \varoplus^i \oplus \olcirc & |V|=2i-1>0, \\
        \bullet & |V|=2i>0, \\
        0 & \textrm{otherwise}.
    \end{cases}
\]
\end{proposition}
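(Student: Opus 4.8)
The plan is to run the homotopy orbit spectral sequence directly, using the computations of \cref{subsec: Cn Mackey functors} to identify its $\ul E^2$-page, argue that it collapses, and then resolve the resulting extensions with \cref{prop: no extensions}. The point of working with Mackey functors rather than groups will be that both the collapse and the splitting become purely formal consequences of the adjunctions defining $\ul L$ and $\ul R$.

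First I would read off the contributions to $\ul\pi_W K_G(k)_h$ for a fixed $W\in RO(G)$. These come from the terms $\ul E^2_{s,\,W-s}=\ul H_s(G;\pi^e_{W-s}K_G(k))$ with $s\geq 0$, and since $\pi^e_{W-s}$ depends only on $|W|-s$ together with the (unchanged) orientation of $W$, \cref{table: symbols} determines exactly which are nonzero. Because the higher $K$-groups of $k$ vanish in even positive and in all negative degrees, each such diagonal is extremely sparse. For $W$ orientation-preserving with $|W|=2i-1$ one finds only $\varominus^i$ in filtration $s=0$ and $\circ$ in filtration $s=2i-1$ (the latter from the $K_0(k)=\bbZ$ column); for $W$ orientation-reversing with $|W|=2i-1$ one finds $\varoplus^i$ at $s=0$ and $\olcirc$ at $s=2i-1$; for $|W|=2i>0$ orientation-reversing one finds only $\bullet$ in filtration $s=2i$; and every remaining diagonal carries at most a single term. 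This bookkeeping already reproduces the stated groups up to differentials and extensions.

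Next I would rule out differentials. A differential $d^r$ lowers the filtration $s$ by $r$, raises $|V|$ by $r-1$, and preserves orientation, so it can be nonzero only when a source and a target survive on two adjacent diagonals. Using that the norm is an isomorphism on the higher $K$-groups (\cref{lemma:ominus,lemma:NormOplus}), so that the $\varominus^i$ and $\varoplus^i$ columns are concentrated in $s=0$, a short inspection of the sparse list shows the orientation-preserving diagonals admit no differentials for parity reasons: the only candidate source would sit in an even filtration of the $|V|=0$ column, where the orientation-preserving $\ker(N)$ term is zero. In the orientation-reversing case the single surviving possibility is a family $d^{2i}\colon \bullet\to\varoplus^i$ running from the $\bullet$ in filtration $2i$ of the $|W|=2i$ diagonal to the $\varoplus^i$ in filtration $0$ of the $|W|=2i-1$ diagonal. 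The hard part of the argument is to kill exactly this family, and I expect naïve degree, filtration, or comparison-with-$\HZ$ arguments to fail here, since the source and target can both carry $2$-torsion when $q$ is odd.

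The key observation I would use is that $d^{2i}$ is a map of Mackey functors, and there are none. Since $\varoplus^i\cong \ul R(K_{2i-1}(k)^{\sigma})$ is a fixed-point Mackey functor, the defining adjunction gives $\Hom_{\Mack}(\bullet,\varoplus^i)\cong \Hom_{\bbZ G}\!\big(\bullet(G/e),\,K_{2i-1}(k)^{\sigma}\big)$, and $\bullet(G/e)=0$, so this group vanishes; thus $d^{2i}=0$ and the spectral sequence collapses at $\ul E^2$. This is the same mechanism underlying \cref{prop: no extensions}. Finally, on each diagonal with $|W|=2i-1$ the two surviving terms fit into a short exact sequence $0\to \varominus^i\to \ul\pi_W K_G(k)_h\to \circ\to 0$ in the orientation-preserving case and $0\to \varoplus^i\to \ul\pi_W K_G(k)_h\to \olcirc\to 0$ in the orientation-reversing case, where in each case the sub is a fixed-point Mackey functor $\ul R(-)$ and the quotient ($\circ$ or $\olcirc$) has trivial underlying group. \cref{prop: no extensions} shows the relevant $\Ext^1$ vanishes, so both extensions split and we obtain the claimed direct-sum decompositions, completing the computation.
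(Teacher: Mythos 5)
Your proposal is correct and follows essentially the same route as the paper: identify the sparse $\ul E^2$-page of the homotopy orbit spectral sequence from the norm computations, kill the only candidate differentials $\bullet\to\varoplus^i$ because $\bullet(G/e)=0$ while $\varoplus^i$ is a fixed-point Mackey functor, and split the remaining extensions via \cref{prop: no extensions}. The only difference is cosmetic (you spell out the adjunction $\Hom_{\Mack}(\bullet,\ul R(N))\cong\Hom_{\bbZ G}(\bullet(G/e),N)$ where the paper just cites that $\varoplus^i$ is in the image of $\ul R$).
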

\begin{proof}
    Fix a $V\in RO(G)$ and let $W = V-|V|$.  The homotopy orbit spectral sequence, based at  $W$, has the form
    \[
        \ul{E}_{s,W+t}^2 = \ul{H}_{s}(G;\pi^e_{W+t}(K_G(k)))\Rightarrow \ul{\pi}_{W+s+t}(K_G(k))
    \]
    and we compute $\ul{\pi}_V(K_G(k))$ by looking at the line in this spectral sequence with $s+t=|V|$.  Note that by \cref{lemma:UnderlyingGmodules}, the Mackey functors in this spectral sequence depend only on the number $|W|+t$ and whether $W$ is orientation preserving or reversing.  Since $W$ and $V$ differ by a trivial representation they are either both orientation preserving or orientation reversing.

    The spectral sequences vanish for negative $s$ because negative group homology is zero.  When $t$ is negative, the spectral sequence vanishes because $K(k)$ is a connective spectrum.  Thus we have a first quadrant spectral sequence.  In fact this spectral sequence vanishes except when $s$ or $t$ is equal to zero.  
    
    To see this, suppose first that $W$ is orientation preserving. Note that when $t$ is not zero we have
    \[
        \ul E^2_{s,t+W} \cong \ul H_s(G;K_{t}(k))
    \]
    and this Mackey functor is zero since the groups $K_t(k)$ are either zero or the groups $\bbZ/(q^{in}-1)$ with $C_n$-action by $q^i$. These homology groups vanish for $s>0$ since,  by \cref{lemma:ominus}, the norm map from orbits to fixed points is an isomorphism. The case of $W$ orientation reversing is essentially the same.

    The spectral sequence is displayed in \cref{figure: HOSS}, with two cases depending on whether $W$ is orientation preserving or reversing.  There are no possible differentials when $W$ is orientation preserving.  There are possible differentials when $W$ is orientation reversing.  These differentials have the form $\bullet\to \varoplus^i$ for various $i$.  Note that these maps must be zero since $\bullet(C_n/e)=0$ and thus there are no non-zero maps of Mackey functors $\bullet\to \varoplus^i$ since $\varoplus^i$ is in the image of the functor $R$.  All possible extensions are trivial by \cref{prop: no extensions}.   
\end{proof}

\begin{figure}[ht]
\centering

\begin{sseqpage}[grid = crossword, 
    classes = {draw = none}, 
    title = {$\ul E^2_{s,W+t}$, $W$ o.p.},
    xrange = {0}{5},
    yrange = {0}{5},
    x label = $s$, 
    y label = $t+|W|$,]

\class["\boxslash"](0,0)

\foreach \i in {1,2,3} {
    \class["\varominus^\i"](0,2*\i-1)
}

\foreach \i in {1,3,5} {
    \class["\circ"](\i,0)
    \structline[dashed](\i,0)(0,\i)
}

\end{sseqpage} 
\begin{sseqpage}[grid = crossword, 
    classes = {draw = none}, homological Serre grading,
    title = {$\ul E^2_{s,W+t}$, $W$ o.r.},
    xrange = {0}{5},
    yrange = {0}{5},
    x label = $s$, 
    y label = $t+|W|$,]

\class["\olboxslash"](0,0)

\foreach \i in {1,2,3} {
    \class["\varoplus^\i"](0,2*\i-1)
}

\foreach \i in {1,3,5} {
    \class["\olcirc"](\i,0)
    \structline[dashed](\i,0)(0,\i)
}

\foreach \i in {2,4} {
    \class["\bullet"](\i,0)
    \d\i(\i,0)
}

\end{sseqpage}

\caption{The $\ul E^2$-pages of the homotopy orbit spectral sequence (HOSS) for $K_G(k)$. Dashed lines indicate potential extensions, and arrows indicate potential differentials. 
\label{figure: HOSS}}

\end{figure}

\begin{remark} \label{remark: Mackey functors}
    The Ext computations afforded by \cref{prop: no extensions} inform our choice to work with Mackey functors throughout this paper.  Even if one were interested only in the $C_n/C_n$-level of the computations, it is important to make use of the entire Mackey functor structure because it helps us to easily resolve extension problems which come from spectral sequences.  
    
    For a concrete example, consider the Galois extension $\bbF_9/\bbF_3$, with Galois group $C_2$, and $i=1$. If we tried to solve extension problems only at the level $C_2/C_2$, we would arrive at an extensions problem which, a priori, has two possible solutions. Indeed,
    \[ 
        \Ext^1_{\bbZ}(\circ(C_2/C_2),\varominus^1(C_2/C_2)) = \Ext^1_{\bbZ}(\bbZ/2,\bbZ/2) = \bbZ/2.
    \]  
    Of course, one can often resolve such extension problems by other means, but using the additional structure afforded by Mackey functors provides a systematic way to do so.
\end{remark} 

The next proposition follows from essentially the same arguments as the last, using the homotopy fixed points spectral sequence in place of the homotopy orbits spectral sequence.  This spectral sequence is displayed in \cref{figure: HFPSS}.  Note that in this case there is no room for either non-trivial differentials or extensions. 

\begin{proposition}\label{prop: homotopy fixed point groups}
The $RO(G)$-graded homotopy Mackey functors of $K_G(k)^h$ are given on orientation preserving representations $V$ by
\[
    \ul\pi_V K_G(k)^h \cong
    \begin{cases}
        \boxempty & |V|=0, \\
        \varominus^i & |V|=2i-1>0, \\
        \circ & |V|=2i<0, \\
        0 & \textrm{otherwise},
    \end{cases}
\]
and on orientation reversing $V$ by
\[
    \ul\pi_V K_G(k)^h \cong
    \begin{cases}
        \boxempty & |V|=0, \\
        \varoplus^i & |V|=2i-1>0, \\
        \bullet & |V|=2i-1<0 \\
        \olcirc & |V|=2i<0, \\
        0 & \textrm{otherwise}.
    \end{cases}
\]
\end{proposition}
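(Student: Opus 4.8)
The plan is to run the Mackey-functor-valued homotopy fixed point spectral sequence in exactly the manner of the previous proposition, with group cohomology in place of group homology. First I would fix $V\in RO(G)$, set $W=V-|V|$, and base the spectral sequence at $W$, writing
\[
    \ul E^2_{s,W+t}=\ul H^{-s}(G;\pi^e_{W+t}K_G(k))\Rightarrow \ul\pi_{W+s+t}K_G(k)^h,
\]
so that $\ul\pi_V K_G(k)^h$ is read off along the antidiagonal $s+t=|V|$. By \cref{lemma:UnderlyingGmodules} the module $\pi^e_{W+t}$ depends only on $t$ and on whether $W$ is orientation preserving or reversing; connectivity of $K(k)$ forces $t\ge 0$, while $\ul H^{-s}$ is nonzero only for $-s\ge 0$. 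Hence the entire $\ul E^2$-page is supported in the second-quadrant region $s\le 0$, $t\ge 0$.

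Next I would read off the $\ul E^2$-page from \cref{table: symbols} together with the $2$-periodicity of cyclic group cohomology, which is governed by the kernel and cokernel of the norm. Two families of classes appear. Along the edge $s=0$ one finds the fixed-point Mackey functors $\ul H^0(G;\pi^e_{W+t})=\ul R(\pi^e_{W+t})$: a square symbol ($\boxempty$ or $\olboxempty$) at $t=0$ and $\varominus^i$ or $\varoplus^i$ at $t=2i-1>0$. Off the edge the only surviving classes lie on the bottom row $t=0$: for $t=2i-1>0$ the norm is an isomorphism by \cref{lemma:ominus} and \cref{lemma:NormOplus}, so every higher cohomology group vanishes, and for even $t>0$ the module is already zero. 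On the row $t=0$ itself the higher cohomology of the trivial (resp. sign) module contributes $\circ$ in even cohomological degree, and in the orientation-reversing case $\bullet$ in odd and $\olcirc$ in even degree. This is precisely the page drawn in \cref{figure: HFPSS}.

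Finally I would confirm the absence of differentials and extensions. Since $d^r\colon \ul E^r_{s,W+t}\to \ul E^r_{s-r,W+t+r-1}$ moves a class $r$ columns left and $r-1$ rows up, any differential out of the edge $s=0$ lands at $s=-r<0$ with $t\ge r-1>0$, which is empty, while any differential touching the bottom row $t=0$ would require $r\le 1$; thus $\ul E^2=\ul E^\infty$. Because the support is the union of the column $s=0$ and the portion $s\le 0$ of the row $t=0$, each antidiagonal $s+t=|V|$ meets it in a single lattice point, so no extension problems arise and $\ul\pi_V K_G(k)^h$ equals the unique surviving term --- the fixed-point Mackey functor when $|V|\ge 0$ and the relevant higher-cohomology Mackey functor when $|V|<0$ --- giving the stated case division. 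I expect no real obstacle: in contrast to the homotopy orbit computation, where the potential differentials $\bullet\to\varoplus^i$ had to be killed using \cref{prop: no extensions} and the lack of maps into the image of $\ul R$, the second-quadrant geometry here rules out differentials automatically. The only point needing care is the degree bookkeeping, since passing from homology to cohomology reflects the higher-filtration classes from positive to negative total degree, which is exactly why $\circ$, $\bullet$, and $\olcirc$ now appear in negative dimensions $|V|<0$.
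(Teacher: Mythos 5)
Your argument is correct and is exactly the proof the paper intends: the paper simply asserts that the homotopy fixed point spectral sequence works ``by the same arguments'' as the homotopy orbit case with no room for differentials or extensions, and you have supplied precisely the second-quadrant support analysis and the observation that each antidiagonal meets the support in a single spot, which is what justifies that assertion. One small point: your computation (correctly) yields $\ul H^0(G;\bbZ^\sigma)=\olboxempty$ in the orientation-reversing $|V|=0$ case, in agreement with \cref{figure: HFPSS}, so the $\boxempty$ appearing in that case of the stated proposition is evidently a typo.
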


\begin{figure}[ht]
\centering

\begin{sseqpage}[grid = crossword, 
    classes = {draw = none}, 
    title = {$\ul E^2_{s,W+t}$, $W$ o.p.},
    xrange = {-5}{0},
    xrange = {-5}{0},
    x label = $s$, 
    y label = $t+|W|$]

\class["\boxempty"](0,0)

\foreach \i in {1,2,3} {
    \class["\varominus^\i"](0,2*\i-1)
}

\foreach \i in {2,4} {
    \class["\circ"](-\i,0)
}

\end{sseqpage}
\begin{sseqpage}[grid = crossword, 
    classes = {draw = none},
    title = {$\ul E^2_{s,W+t}$, $W$ o.r.},
    xrange = {-5}{0},
    xrange = {-5}{0},
    x label = $s$, 
    y label = $t+|W|$]

\class["\olboxempty"](0,0)
\foreach \i in {1,2,3} {
    \class["\varoplus^\i"](0,2*\i-1)
}

\foreach \i in {1,3,5} {
    \class["\bullet"](-\i,0)
}

\foreach \i in {2,4} {
    \class["\olcirc"](-\i,0)
}

\end{sseqpage}

\caption{The $\ul E^2$-pages of the homotopy fixed point spectral sequence (HFPSS) for $K_G(k)$.}
\label{figure: HFPSS}
\end{figure}

With this computation in hand, we prove the first part of \cref{thm: Tate tilde intro version}

\begin{proposition}\label{prop: Tate equivalence}
    There is an equivalence of $G$-spectra $K_G(k)^t\xrightarrow{\simeq} \HZ^t$.
\end{proposition}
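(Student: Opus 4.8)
The plan is to prove the proposition by comparing the \emph{Tate spectral sequences} of $K_G(k)$ and $\HZ$, in direct parallel with the homotopy orbit and homotopy fixed point spectral sequences of \cref{subsec: norm cofib computation}. Associated to any $G$-spectrum $X$ there is an $RO(G)$-graded, Mackey functor-valued Tate spectral sequence
\[
    \ul E^2_{s,V} = \ul{\wH}^{-s}(G; \pi^e_V X) \Rightarrow \ul\pi_{s+V} X^t,
\]
whose $\ul E^2$-page records the $2$-periodic Tate cohomology of the underlying $G$-modules $\pi^e_V X$. The truncation map $K_G(k)\to\HZ$ induces a map of Tate spectral sequences, and by the comparison theorem it suffices to show this map is an isomorphism on $\ul E^2$-pages; since homotopy Mackey functors detect equivalences, an isomorphism on abutments yields the desired equivalence $K_G(k)^t\xrightarrow{\sim}\HZ^t$.

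The key observation is that only the virtual-dimension-zero classes survive to the $\ul E^2$-page. For a cyclic group the Tate cohomology Mackey functor $\ul{\wH}^*(G;M)$ vanishes identically whenever the norm map $\ul L(M)\to\ul R(M)$ is an isomorphism, since at each level $G/K$ the restricted norm $M_K\to M^K$ computes both the kernel and cokernel, and hence all Tate groups $\wH^*(K;\res^G_K M)$. By \cref{lemma:ominus,lemma:NormOplus}, this norm map is an isomorphism for the $G$-modules $K_{2i-1}(k)$ and $K_{2i-1}(k)^\sigma$ underlying $K_G(k)$ in every positive odd virtual dimension. Consequently the only nonzero contributions to the Tate $\ul E^2$-page for $K_G(k)$ come from $\pi^e_V K_G(k)$ with $|V|=0$, namely the modules $\bbZ$ and $\bbZ^\sigma$ of \cref{lemma:UnderlyingGmodules}.

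The same analysis applies to $\HZ$, whose underlying spectrum is $H\bbZ$: here $\pi^e_V\HZ$ is $\bbZ$ or $\bbZ^\sigma$ when $|V|=0$ and vanishes otherwise, so its Tate $\ul E^2$-page is likewise supported in virtual dimension zero. Since the comparison map is the truncation $\tau_{\leq 0}$, it induces an isomorphism on underlying homotopy $\pi^e_V$ in virtual dimension zero (both sides being $\bbZ$ or $\bbZ^\sigma$ according to orientation), and therefore an isomorphism of the two Tate $\ul E^2$-pages. By naturality of differentials the map is then an isomorphism on every $\ul E^r$-page, and I would conclude by passing to abutments.

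The step I expect to require the most care is convergence: one must ensure that an isomorphism on $\ul E^2$-pages genuinely forces an isomorphism of the abutments of these (conditionally convergent) Tate spectral sequences. A clean way to sidestep the issue is to use exactness of the Tate construction: writing $F=\hofib(\tau_{\leq 0})$, one has $\hofib\bigl(K_G(k)^t\to\HZ^t\bigr)\simeq F^t$, and $\pi^e_V F$ is $K_{2i-1}(k)$ (possibly twisted) for $|V|=2i-1>0$ and zero otherwise. As above, all such modules have vanishing Tate cohomology, so the Tate spectral sequence for $F^t$ has identically zero $\ul E^2$-page; hence $\ul\pi_\star F^t=0$ and $F^t\simeq *$, which immediately gives the equivalence.
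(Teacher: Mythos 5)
Your proposal matches the paper's own proof: both compare the Tate spectral sequences along the truncation map $K_G(k)\to\HZ$, invoke \cref{lemma:ominus,lemma:NormOplus} to conclude that the $G$-modules $\pi^e_VK_G(k)$ with $|V|\neq 0$ have isomorphic norm maps and hence vanishing Tate cohomology, and observe that both $\ul E^2$-pages are therefore concentrated in virtual dimension zero, where $\tau_{\leq 0}$ induces an isomorphism. Your closing variant via the fiber $F=\hofib(\tau_{\leq 0})$, whose Tate construction has identically vanishing $\ul E^2$-page, is a tidy way to dispose of the convergence worry, but the essential computation is the same as the paper's.
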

\begin{proof}
    We consider the map of Tate spectral sequences
    \[ \begin{tikzcd}
        \ul E^2_{s,V} = \ul\wH^s(G; \pi^e_V K_G(k)) \arrow[d] \arrow[r, Rightarrow] & \ul\pi_{s+V} K_G(k)^t \arrow[d] \\
        \ul E^2_{s,V} = \ul\wH^s(G; \pi^e_V \HZ) \arrow[r, Rightarrow] & \ul\pi_{s+V} \HZ^t
    \end{tikzcd} \]
    induced by the zeroth Postnikov section map $K_G(k)\to\HZ$, and show that there is an equivalence of $\ul E^2$-pages. 
    
    The effect of the computations in \cref{lemma:NormOplus,lemma:ominus} is that 
    \[
        \ul\wH^*(G; \pi^e_V K_G(k)) = 0
    \]
    for $|V| \neq 0$. Indeed, a $G$-module for which the norm map is an isomorphism has vanishing Tate cohomology.
    
    As a result, the $\ul E^2$-page for $K_G(k)$ is supported in the region where $|V|=0$. The $\ul E^2$-page for $\HZ$ is also supported in this range, since $\pi^e_\star \HZ$ is concentrated in total degree 0. Within the $|V|=0$ region of the $\ul E^2$-page, the zero Postnikov section map induces an isomorphism
    \[
        \ul\wH^*(G; \pi^e_V K_G(k)) \cong \ul\wH^*(G; \pi^e_V \HZ),
    \]
    which finishes the proof of the claim.
\end{proof}

\section{Proofs of the main theorems} \label{sec: proof}

As in the last section, we fix a field $k = \bbF_{q^n}$ for $q$ a positive power of a prime $p$ and let $G \cong C_n$ be the Galois group of the extension $k/\bbF_q$. In this section we prove the main results of the paper, starting with \cref{thm: pullback thm intro version} which we now recall.

\begin{theorem}\label{thm: pullback theorem}
    There is a homotopy pullback of genuine $G$-spectra
    \[
        \begin{tikzcd}
            K_G(k) \ar[r] \ar[d] & \HZ \ar[d]\\
            K_G(k)^h \ar[r] & \HZ^h
        \end{tikzcd}
    \]
    where the horizontal maps are the zeroth Postnikov truncations and the vertical maps are the geometric completion maps from the Tate diagram.
\end{theorem}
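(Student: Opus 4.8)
The plan is to deduce this immediately from \cref{lemma: pullback lemma}, applied to the map $f = \tau_{\leq 0}\colon K_G(k)\to \HZ$. Since $\ul\pi_0 K_G(k)\cong \ul\bbZ$, there is a canonical zeroth Postnikov truncation $\tau_{\leq 0}$ realizing this isomorphism, and this is the top horizontal map of the square. Applying the geometric completion functor $F(EG_+,-)$ to $\tau_{\leq 0}$ produces the bottom horizontal map $K_G(k)^h\to \HZ^h$, while the vertical maps $X\to X^h$ are the geometric completion maps induced by $\epsilon\colon EG_+\to S^0$ exactly as in the Tate diagram. Thus the square in question is precisely the one produced by \cref{lemma: pullback lemma} for $f = \tau_{\leq 0}$, and there is nothing further to identify.

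To invoke the lemma I must verify its two hypotheses: that $f$ induces equivalences $\wt{K_G(k)}\xrightarrow{\sim}\wt{\HZ}$ and $K_G(k)^t\xrightarrow{\sim}\HZ^t$. These are exactly the two equivalences asserted in \cref{thm: Tate tilde intro version}. The Tate equivalence is \cref{prop: Tate equivalence}, already established above: the computations of \cref{lemma:ominus,lemma:NormOplus} show the norm map on $\pi^e_V K_G(k)$ is an isomorphism whenever $|V|\neq 0$, so the Tate cohomology vanishes off the $|V|=0$ line, and on that line $\tau_{\leq 0}$ induces an isomorphism of $\ul E^2$-pages. The $\wt{(-)}$ equivalence is \cref{prop: tilde equivalence}. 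Granting both inputs, \cref{lemma: pullback lemma} delivers the homotopy pullback square, which completes the proof.

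The pullback assembly itself is purely formal, so the substantive content lives entirely in the two corner equivalences, and this is where I expect the real work to be rather than in the theorem at hand. The Tate equivalence is in hand; the remaining obstacle is the geometric (tilde) equivalence of \cref{prop: tilde equivalence}, i.e.\ showing that $\tau_{\leq 0}$ becomes an equivalence after smashing with $\tEG$. I would approach this by the same mechanism used for \cref{prop: Tate equivalence} --- comparing the relevant spectral sequences (equivalently, comparing geometric fixed points) for $K_G(k)$ and $\HZ$ --- using that the positive-dimensional homotopy $G$-modules of $K_G(k)$ have invertible norm and so contribute nothing to $\wt{(-)}$, leaving only the $|V|=0$ part where $\tau_{\leq 0}$ is an isomorphism. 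Once \cref{prop: tilde equivalence} is secured, no additional argument is needed here.
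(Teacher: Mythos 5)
Your proof of the theorem itself is correct and is exactly the paper's argument: reduce to \cref{lemma: pullback lemma} applied to $\tau_{\leq 0}$, and feed in the two corner equivalences \cref{prop: Tate equivalence} and \cref{prop: tilde equivalence}. No further identification of the square is needed, as you say.

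One caution about your closing sketch of how you would prove \cref{prop: tilde equivalence}: the ``same mechanism as \cref{prop: Tate equivalence}'' does not directly apply. The Tate and homotopy fixed point spectral sequences compute $X^t$ and $X^h$ from the underlying $G$-modules $\pi^e_V X$, and that is where the invertible-norm argument lives; but $\wt{X}=\tEG\wedge X$ admits no such spectral sequence --- its homotopy is a localization of the genuine $RO(G)$-graded homotopy of $X$ (the thing being computed), not a functor of the underlying $G$-modules. The paper instead deduces \cref{prop: tilde equivalence} from \cref{cor: Tate connected cover}: since $K_G(k)\to K_G(k)^h$ and $\HZ\to\HZ^h$ are connective covers (\cref{prop: K theory connected cover,prop: HZ connective cover}), the maps $\wt{K_G(k)}\to K_G(k)^t$ and $\wt{\HZ}\to\HZ^t$ are connective covers by \cref{lemma: Greenlees-Meier Connected cover lemma}, and the already-established equivalence on Tate spectra then lifts to the tilde spectra. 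If you intend your write-up to be self-contained modulo \cref{prop: Tate equivalence}, you should route the tilde equivalence through this connective-cover comparison rather than a spectral-sequence comparison.
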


The homotopy groups of the bottom horizontal arrow can be computed effectively using the homotopy orbit spectral sequences. We use this to compute the homotopy groups of the fiber of both horizontal maps in \cref{prop: homotopy of the fiber} below.    

To prove \cref{thm: pullback theorem}, note that \cref{lemma: pullback lemma} reduces the theorem to proving that the map $K_G(k)\to \HZ$ induces equivalences 
\[
    \wt{K_G(k)}\to \wt{\HZ}\quad \mathrm{and}\quad K_G(k)^t\to \HZ^t.
\]
The equivalence on Tate spectra was proven in the previous section as \cref{prop: Tate equivalence}.  The fact that the map $\wt{K_G(k)}\to \wt{\HZ}$ is an equivalence then follows from the observation, \cref{cor: Tate connected cover} below, that the maps $\wt{K_G(k)}\to K_G(k)^t$ and $\wt{\HZ}\to \HZ^t$ are both connective covers.  

\subsection{Comparison of Tate diagrams}

\begin{proposition}\label{prop: K theory connected cover}
    The map $K_G(k)\to K_G(k)^h$ is a connective covering of $G$-spectra.
\end{proposition}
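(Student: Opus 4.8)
The plan is to unwind the definition: a map of $G$-spectra is a connective covering precisely when its source is connective and it induces an isomorphism on homotopy Mackey functors $\underline\pi_j$ in every nonnegative degree $j$ (equivalently, its fiber is coconnective). So I must check two things for $K_G(k)\to K_G(k)^h$: that $K_G(k)$ is connective, and that the geometric completion map is an isomorphism on $\underline\pi_j$ for $j\ge 0$. Connectivity is immediate from \cref{thm: fixed points of K}: for each $C_m\le C_n$ the fixed points satisfy $K_G(k)^{C_m}\simeq K(\bbF_{q^{n/m}})$, and algebraic $K$-theory of a field is connective, so $\underline\pi_j K_G(k)=0$ for $j<0$.

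For the nonnegative degrees I would first observe that both sides are fixed-point Mackey functors. Taking $V$ to be the trivial (hence orientation-preserving) representation of dimension $j\ge 0$, \cref{prop: homotopy fixed point groups} computes $\underline\pi_j K_G(k)^h$ to be $\boxempty$, $\varominus^i$, or $0$; by \cref{lemma:ominus} these are exactly $\ul R(\bbZ)$, $\ul R(K_{2i-1}(k))$, and $\ul R(0)$, so $\underline\pi_j K_G(k)^h\cong \ul R(\pi^e_j K_G(k))$ for all $j\ge 0$. The same identification holds for $K_G(k)$ itself: at level $C_n/C_m$ one has $\underline\pi_j K_G(k)(C_n/C_m)=K_j(\bbF_{q^{n/m}})=K_j(k)^{C_m}$, and the restrictions and transfers match those of the fixed-point Mackey functor (the transfers being the Galois norms), giving $\underline\pi_j K_G(k)\cong \ul R(\pi^e_j K_G(k))$ as well.

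The heart of the argument is then a rigidity statement for maps into $\ul R$-objects. The completion map is an underlying equivalence: since $(EG_+)^e\simeq S^0$, restriction to the trivial subgroup sends $K_G(k)\to K_G(k)^h$ to an equivalence, so $\pi^e_j K_G(k)\to \pi^e_j K_G(k)^h$ is an isomorphism for every $j$. For $j\ge 0$ the induced map $f_j\colon \underline\pi_j K_G(k)\to \underline\pi_j K_G(k)^h$ therefore lands in an object of the form $\ul R(N)$, and by the adjunction $\Hom(\underline P,\ul R(N))\cong \Hom_{\bbZ G}(\underline P(C_n/e),N)$ it is determined by its value at level $C_n/e$. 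Since both source and target are $\ul R$ applied to the underlying $G$-modules and $f_j$ restricts at $C_n/e$ to the underlying isomorphism, $f_j$ must be $\ul R$ of that isomorphism, hence itself an isomorphism.

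I expect the rigidity step to be the main subtlety, because an isomorphism on underlying groups does not by itself force an isomorphism of Mackey functors into an $\ul R$-object: the norm $\boxslash=\ul L(\bbZ)\to\ul R(\bbZ)=\boxempty$ is an isomorphism at level $C_n/e$ but far from an isomorphism. What rescues the argument is that the source $\underline\pi_j K_G(k)$ is also a fixed-point Mackey functor in nonnegative degrees, so the real content is the identification $\underline\pi_j K_G(k)\cong \ul R(\pi^e_j K_G(k))$ for $j\ge 0$, resting on Merling's description of the genuine fixed points together with Quillen's computation of the Galois action. Combining connectivity of $K_G(k)$ with this isomorphism on $\underline\pi_{\ge 0}$ then yields that $K_G(k)\to K_G(k)^h$ is a connective covering.
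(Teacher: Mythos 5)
Your proof is correct and follows essentially the same route as the paper: identify $\ul\pi_j K_G(k)$ and $\ul\pi_j K_G(k)^h$ for $j\ge 0$ as fixed-point Mackey functors via Merling's fixed-point description, Quillen's computation, and \cref{prop: homotopy fixed point groups}, and then use that a map into $\ul R(N)$ is determined by its value at level $C_n/e$, where the completion map is an underlying equivalence. Your explicit discussion of the rigidity step and of the connectivity of the source only makes more careful what the paper compresses into ``an endomorphism of $\ul R(N)$ which is an isomorphism at the bottom level is an isomorphism.''
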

\begin{proof}
    By Merling's result, \cref{thm: fixed points of K}, we know that for all $C_m\leq C_n$ we have \[K_G(\bbF_{q^n})^{C_m}\cong K((\bbF_{q^n})^{C_m}) \cong K(\bbF_{q^{n/m}})\] 
    and so by Quillen's computation, \cref{theorem: Quillen}, we have
    \[
        \pi^{C_m}_i(K_G(k))\cong \pi_i(K(\bbF_{q^{n/m}}))\cong \bbZ/(q^{ni/m}-1)\cong \varominus^i(C_n/C_m)
    \]
    and so there are isomorphisms $\ul\pi_i(K_G(k))\cong \varominus^i$ for all $i\geq 0$.  Comparing with \cref{prop: homotopy fixed point groups} for $V=i$, we see that the positive integral homotopy Mackey functors are abstractly isomorphic and it remains to check that the map $K_G(k)\to K_G(k)^h$ from the Tate diagram actually induces an isomorphism.
    
    Since the map of underlying spectra $K_G(k)\to K_G(k)^h$ is an equivalence it gives an isomorphism at $C_n/e$ level of $\varominus^i$.  Since 
    \[
        \varominus^i\cong \ul R(\bbZ/q^{ni}-1),
    \]
    an endomorphism which is an isomorphism at the bottom level is an isomorphism.  Thus for all $i\geq 0$ the map $\ul\pi_i(K_G(k))\to \ul\pi_i(K_G(k)^h)$ is an isomorphism of Mackey functors.
\end{proof}

\begin{proposition}\label{prop: HZ connective cover}
    The map $\HZ\to \HZ^h$ is a connective covering of $G$-spectra.
\end{proposition}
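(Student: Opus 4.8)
The plan is to mirror the proof of \cref{prop: K theory connected cover}: first compute $\ul\pi_*(\HZ^h)$ via the homotopy fixed point spectral sequence, and then verify that the geometric completion map realizes the expected isomorphisms. Since the underlying spectrum of $\HZ$ is $H\bbZ$, we have $\pi^e_t\HZ\cong\bbZ$ for $t=0$ and $0$ otherwise, with trivial $G$-action since trivial representations are orientation-preserving. The integer-graded HFPSS
\[
    \ul E^2_{s,t}=\ul H^{-s}(G;\pi^e_t\HZ)\Rightarrow \ul\pi_{s+t}(\HZ^h)
\]
therefore has $\ul E^2$-page concentrated in the single column $t=0$, where $\ul E^2_{s,0}=\ul H^{-s}(G;\ul\bbZ)$.

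First I would observe that, because the entire $\ul E^2$-page lies in one column, there is no room for differentials or extension problems and the spectral sequence collapses. Reading off the edge yields $\ul\pi_s(\HZ^h)\cong\ul H^{-s}(G;\bbZ)$ for $s\leq 0$ and $\ul\pi_s(\HZ^h)=0$ for $s>0$. In particular $\ul\pi_i(\HZ^h)=0$ for all $i>0$, while $\ul\pi_0(\HZ^h)\cong\ul H^0(G;\bbZ)=\boxempty=\ul\bbZ$. (The negative homotopy Mackey functors recover the familiar cohomology of $C_n$, namely $\circ$ in even negative degrees and $0$ in odd negative degrees, but these are irrelevant to the connective-cover claim.)

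Next I would confirm that the map $\HZ\to\HZ^h$ realizes these as isomorphisms. For $i>0$ both source and target vanish, so the only content is at $i=0$. There the argument is identical to the one in \cref{prop: K theory connected cover}: the geometric completion map is an equivalence of underlying spectra, since $\res^G_e EG_+\simeq S^0$, so it is an isomorphism at the $C_n/e$ level; and because $\ul\pi_0(\HZ)=\ul\bbZ=\ul R(\bbZ)$ is a fixed-point Mackey functor, any map inducing an isomorphism at the bottom level is automatically an isomorphism of Mackey functors. Together with the connectivity of $\HZ$, this shows that $\HZ\to\HZ^h$ is a connective covering.

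I expect no serious obstacle: the collapse is immediate from the single-column shape, and the identification of the map on $\ul\pi_0$ reuses the fixed-point Mackey functor argument already applied to $K_G(k)$. The one point meriting care is distinguishing an abstract isomorphism of $\ul\pi_0$ from one induced by the specific map $\HZ\to\HZ^h$, which the $\ul R$-adjunction argument handles cleanly.
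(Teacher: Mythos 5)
Your proof is correct and follows essentially the same route as the paper: the paper likewise reads off $\ul\pi_s\HZ^h\cong\ul H^{-s}(G;\bbZ)$ from the (single-column) homotopy fixed point spectral sequence, notes this is concentrated in nonpositive degrees with $\ul\pi_0\HZ^h\cong\ul\bbZ$, and observes the map is an isomorphism on $\ul\pi_0$. Your extra care in verifying that the specific map $\HZ\to\HZ^h$ (rather than an abstract isomorphism) realizes this, via the $\ul R$-adjunction argument, is a reasonable elaboration of what the paper leaves implicit.
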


\begin{proof}
    On integer-graded homotopy groups, we have 
    \[ 
        \ul\pi_s \HZ^h \cong \ul H^{-s}(G;\bbZ),
    \]
    which is concentrated in nonpositive degrees. In degree zero,
    \[ 
        \ul\pi_0 \HZ^h \cong \ul H^0(G;\bbZ) \cong \ul\bbZ,
    \]
    and $\ul\pi_0\HZ\to \ul\pi_0\HZ^h$ is an isomorphism.
\end{proof}

\begin{lemma}[{c.f.\ \cite[Lemma 11.2]{Greenlees--Meier}}]\label{lemma: Greenlees-Meier Connected cover lemma}
    For any connective $G$-spectrum $X$, the map $X\to X^h$ is a connective cover if and only if $\wt{X}\to X^t$ is a connective cover.
\end{lemma}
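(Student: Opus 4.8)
The plan is to compare the two rows of the Tate diagram as a map of cofiber sequences and pass to cofibers of the columns, in the same spirit as the proof of \cref{lemma: pullback lemma} (where one instead took fibers across a cube). Recall that the Tate diagram exhibits the columns $X_h \xrightarrow{\simeq} X_h$, $X \to X^h$, and $\wt{X} \to X^t$ as a map between the cofiber sequences $X_h \to X \to \wt{X}$ and $X_h \to X^h \to X^t$, with the left-hand column an equivalence. Taking vertical cofibers (an application of the octahedral axiom) produces a cofiber sequence
\[
    \operatorname{cofib}(X_h \xrightarrow{\simeq} X_h) \to \operatorname{cofib}(X \to X^h) \to \operatorname{cofib}(\wt{X} \to X^t).
\]
Since the first term is contractible, the induced map $\operatorname{cofib}(X \to X^h) \to \operatorname{cofib}(\wt{X}\to X^t)$ is an equivalence; I write $C$ for this common cofiber.

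Next I would isolate the general principle governing connective covers: for a map $g\colon Y \to Z$ of $G$-spectra with $Y$ connective, $g$ is a connective cover precisely when $\ul{\pi}_i \operatorname{cofib}(g) = 0$ for all $i \geq 0$. This is immediate from the long exact sequence of homotopy Mackey functors attached to the cofiber sequence $Y \to Z \to \operatorname{cofib}(g)$: the vanishing of $\ul{\pi}_i\operatorname{cofib}(g)$ for $i \ge 0$ is equivalent to $g$ inducing isomorphisms on $\ul{\pi}_i$ for all $i \geq 0$, and since $Y$ is connective these isomorphisms exhibit $Y$ as $\tau_{\geq 0} Z$. (Concretely, $\operatorname{cofib}(\tau_{\ge 0}Z \to Z) \simeq \tau_{\le -1}Z$.)

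The remaining input is that both $X$ and $\wt{X} = \tEG \wedge X$ are connective. For $X$ this is the hypothesis; for $\wt{X}$ it follows because $\tEG$ is the suspension spectrum of a based $G$-space, hence connective, and the smash product of connective genuine $G$-spectra is again connective. With this in hand the conclusion is formal: applying the principle to $g = (X\to X^h)$, whose source $X$ is connective, and to $g = (\wt{X} \to X^t)$, whose source $\wt{X}$ is connective, shows that each of ``$X \to X^h$ is a connective cover'' and ``$\wt{X} \to X^t$ is a connective cover'' is equivalent to the single condition $\ul{\pi}_i C = 0$ for all $i \geq 0$. Hence the two are equivalent to one another.

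I expect the only genuine care to be needed in the two supporting facts rather than in the main line of argument: verifying that the vertical cofibers of a map of cofiber sequences again form a cofiber sequence (routine via the octahedral axiom, made especially clean here by the left column being an equivalence), and confirming the connectivity of $\wt{X}$. Neither is deep, but both must be stated precisely, since it is exactly the connectivity of \emph{both} sources $X$ and $\wt{X}$ that makes the ``if and only if'' symmetric and lets everything collapse to the single vanishing condition on $C$.
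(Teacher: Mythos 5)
Your argument is correct. The paper proves the lemma by a more hands-on version of the same underlying idea: it compares the two rows of the Tate diagram via the induced map of long exact sequences of homotopy Mackey functors and applies the five lemma degree by degree, starting at degree zero (where it uses that $X_h$ is connective, via the homotopy orbit spectral sequence, to cut off the sequences). Your route --- identifying $\operatorname{cofib}(X\to X^h)\simeq\operatorname{cofib}(\wt{X}\to X^t)$ by taking vertical cofibers across the Tate diagram, and then characterizing a connective cover out of a connective source by the vanishing of $\ul\pi_i$ of its cofiber for $i\geq 0$ --- packages that diagram chase into a single vanishing condition on the common cofiber, which makes the symmetry of the ``if and only if'' transparent; it is arguably the cleaner formulation. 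The two supporting facts you flag do hold: the $3\times 3$/octahedral step is standard, and the connectivity of $\wt{X}$ can be had even more cheaply than by your smash-product argument, since $\wt{X}$ is the cofiber of $X_h\to X$ and both terms are connective. One small caution: your parenthetical claim that vanishing of $\ul\pi_{i}\operatorname{cofib}(g)$ for $i\ge 0$ is \emph{equivalent} to $g$ inducing isomorphisms on $\ul\pi_{i}$ for $i\ge 0$ genuinely requires the connectivity of the source (otherwise $\ul\pi_0$ of the cofiber can see $\ul\pi_{-1}$ of the source), so that hypothesis is doing real work in both applications --- but you state it, so the proof is complete.
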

\begin{proof}
    Because $X$ is connective the homotopy orbit spectral sequence shows that $X_h$ is also connective.  Thus the Tate diagram induces a map of long exact sequences of homotopy Mackey functors which, near degree zero, looks like:
\[\begin{tikzcd}
	\cdots & {\ul{\pi}_0(X_h)} & {\ul{\pi}_{0}(X)} & {\ul{\pi}_{0}(\wt{X})} & 0 & \cdots \\
	\cdots & {\ul{\pi}_0(X_h)} & {\ul{\pi}_{0}(X^h)} & {\ul{\pi}_{0}(X^t)} & 0 & \cdots
	\arrow[from=1-1, to=1-2]
	\arrow[from=2-1, to=2-2]
	\arrow["\cong", from=1-2, to=2-2]
	\arrow[from=1-2, to=1-3]
	\arrow[from=2-4, to=2-5]
	\arrow[from=1-4, to=1-5]
	\arrow[from=1-3, to=1-4]
	\arrow[from=2-3, to=2-4]
	\arrow[from=1-5, to=2-5]
	\arrow[from=1-4, to=2-4]
	\arrow[from=1-3, to=2-3]
	\arrow[from=1-5, to=1-6]
	\arrow[from=2-5, to=2-6]
	\arrow[from=2-2, to=2-3]
\end{tikzcd}\]
For degree zero the claim follows from this portion of the diagram and the fact that there is a column of zeros.  For degrees above zero the claim follows inductively from the five lemma.
\end{proof}

The following is an corollary of \cref{prop: K theory connected cover,prop: HZ connective cover,lemma: Greenlees-Meier Connected cover lemma}.

\begin{corollary}\label{cor: Tate connected cover}
    The maps $\wt{K_G(k)}\to K_G(k)^t$ and $\wt{\HZ}\to \HZ^t$ are connective coverings.
\end{corollary}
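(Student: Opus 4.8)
The plan is to deduce this corollary immediately from \cref{lemma: Greenlees-Meier Connected cover lemma} by applying it separately to the two $G$-spectra $K_G(k)$ and $\HZ$. That lemma asserts that for a \emph{connective} $G$-spectrum $X$, the map $X\to X^h$ is a connective cover if and only if $\wt{X}\to X^t$ is a connective cover. Since \cref{prop: K theory connected cover,prop: HZ connective cover} already establish that $K_G(k)\to K_G(k)^h$ and $\HZ\to \HZ^h$ are connective covers, the only remaining task is to verify the connectivity hypothesis for each spectrum, after which the conclusion is automatic.

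First I would confirm that $K_G(k)$ is connective as a genuine $G$-spectrum, meaning that each of its fixed-point spectra is connective. By Merling's \cref{thm: fixed points of K}, the $C_m$-fixed points satisfy $K_G(\bbF_{q^n})^{C_m}\simeq K(\bbF_{q^{n/m}})$, the algebraic $K$-theory of a finite field. The $K$-theory spectrum of a field is connective --- indeed Quillen's \cref{theorem: Quillen} shows its homotopy groups vanish in negative degrees --- so every fixed-point spectrum is connective, and hence so is $K_G(k)$. Next I would note that $\HZ$ is connective essentially by construction: as the equivariant Eilenberg--MacLane spectrum on $\ul\bbZ$, its homotopy Mackey functors are concentrated in degree zero.

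With connectivity in hand for both spectra, \cref{lemma: Greenlees-Meier Connected cover lemma} transfers the connective-cover property of $K_G(k)\to K_G(k)^h$ from \cref{prop: K theory connected cover} to the map $\wt{K_G(k)}\to K_G(k)^t$, and likewise transfers the connective-cover property of $\HZ\to \HZ^h$ from \cref{prop: HZ connective cover} to $\wt{\HZ}\to \HZ^t$. This completes the argument.

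I do not expect any genuine obstacle here: the mathematical content is carried entirely by the preceding propositions and the Greenlees--Meier lemma, and the corollary is purely a matter of assembling them. The only point requiring any attention is the routine verification that both input spectra meet the connectivity hypothesis of \cref{lemma: Greenlees-Meier Connected cover lemma}, which as noted above follows for $K_G(k)$ from \cref{thm: fixed points of K} together with \cref{theorem: Quillen}, and for $\HZ$ from its definition.
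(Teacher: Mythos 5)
Your argument is exactly the paper's: the corollary is stated there as an immediate consequence of \cref{prop: K theory connected cover,prop: HZ connective cover,lemma: Greenlees-Meier Connected cover lemma}, and your explicit verification of the connectivity hypothesis (via \cref{thm: fixed points of K} and \cref{theorem: Quillen} for $K_G(k)$, and by construction for $\HZ$) is a correct filling-in of a detail the paper leaves implicit.
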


We now prove the second part of \cref{thm: Tate tilde intro version}.

\begin{proposition}\label{prop: tilde equivalence}
    The Postnikov truncation $K_G(k)\to \HZ$ induces an equivalence of $G$-spectra 
    \[
        \wt{K_G(k)}\to \wt{\HZ}.
    \]
\end{proposition}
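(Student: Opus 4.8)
The plan is to assemble the inputs already established into a single commutative square and then run a degree-by-degree comparison of homotopy Mackey functors. By naturality of the Tate diagram in $X$ (recalled in \cref{sec: Tate}), the Postnikov truncation $K_G(k)\to\HZ$ induces a commutative square
\[
\begin{tikzcd}
	\wt{K_G(k)} \ar[r] \ar[d] & \wt{\HZ} \ar[d] \\
	K_G(k)^t \ar[r] & \HZ^t
\end{tikzcd}
\]
in which the vertical maps are the canonical comparison maps $\wt{X}\to X^t$ from the Tate diagram. By \cref{prop: Tate equivalence} the bottom horizontal map is an equivalence, and by \cref{cor: Tate connected cover} both vertical maps are connective coverings.

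First I would treat the nonnegative homotopy Mackey functors. For each $i\geq 0$, the two vertical connective-covering maps induce isomorphisms $\ul\pi_i(\wt{K_G(k)})\xrightarrow{\cong}\ul\pi_i(K_G(k)^t)$ and $\ul\pi_i(\wt{\HZ})\xrightarrow{\cong}\ul\pi_i(\HZ^t)$, while the bottom map induces an isomorphism $\ul\pi_i(K_G(k)^t)\xrightarrow{\cong}\ul\pi_i(\HZ^t)$. Commutativity of the square then forces the top map $\wt{K_G(k)}\to\wt{\HZ}$ to induce an isomorphism on $\ul\pi_i$ for every $i\geq 0$.

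Next I would dispose of the negative degrees. Because the maps $\wt{K_G(k)}\to K_G(k)^t$ and $\wt{\HZ}\to\HZ^t$ are connective coverings, their sources are connective $G$-spectra; hence $\ul\pi_i(\wt{K_G(k)})=0=\ul\pi_i(\wt{\HZ})$ for all $i<0$, and the top map is trivially an isomorphism on these Mackey functors as well. Since $\wt{K_G(k)}\to\wt{\HZ}$ is thus an isomorphism on all (integer-graded) homotopy Mackey functors, it is an equivalence of genuine $G$-spectra.

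The argument is entirely formal given the earlier inputs, so there is no substantial obstacle; the only point that must be checked is that the square commutes, which is immediate from the naturality of the Tate diagram. Equivalently, one may phrase the whole proof as applying the connective-cover functor $\tau_{\geq 0}$ to the equivalence of \cref{prop: Tate equivalence}, using that the natural map $\wt{X}\to X^t$ realizes $\wt{X}$ as $\tau_{\geq 0}(X^t)$ naturally in $X$.
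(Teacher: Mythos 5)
Your proposal is correct and matches the paper's argument: both form the same commutative square from the naturality of the Tate diagram, invoke \cref{cor: Tate connected cover} for the vertical connective covers and \cref{prop: Tate equivalence} for the bottom equivalence, and conclude the top map is an equivalence. The paper leaves the final degree-by-degree comparison implicit, which you have simply spelled out.
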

\begin{proof}
    There is a commutative square of $G$-spectra
    \[
    \begin{tikzcd}
        \wt{K_G(k)} \ar[r] \ar[d] & \wt{\HZ} \ar[d] \\
        K_G(k)^t \ar[r] & \HZ^t
        \end{tikzcd}
    \]
    where both vertical maps are connective covers by \cref{cor: Tate connected cover}.  The result now follows from that fact that the bottom arrow is an equivalence by \cref{prop: Tate equivalence}.
\end{proof}

This completes the proof of \cref{thm: pullback theorem}, which follows immediately from \cref{prop: tilde equivalence,prop: Tate equivalence,lemma: pullback lemma}. 

\subsection{The $RO(G)$-graded $K$-groups of finite fields}

\cref{thm: pullback theorem} tells us there is a homotopy pullback diagram
\[
    \begin{tikzcd}
        K_G(k) \ar[r] \ar[d] & \HZ \ar[d]\\
        K_G(k)^h \ar[r] & \HZ^h,
    \end{tikzcd}
\]
which we use in this section to reduce the $RO(G)$-graded $K$-groups of finite fields to those of $\HZ$. This reduction proceeds by noticing that the pullback square give us an equivalence of fibers
\begin{align*}
    \tau_{\geq 1} K_G(k) \xrightarrow{\sim} \tau_{\geq 1} K_G(k)^h.
\end{align*}
The homotopy Mackey functors of the fiber are straightforward to compute, giving us \cref{thm: fiber intro version} from the introduction.
\begin{proposition}  \label{prop: homotopy of the fiber}
    The $RO(G)$-graded homotopy groups of $\tau_{\geq 1} K_G(k)^h$ are given on orientation preserving representations $V$ by
    \[
    \ul\pi_V \left( \tau_{\geq 1} K_G(k)^h \right)\cong
    \begin{cases}
        \varominus^i & |V|=2i-1>0, \\
        0 & \textrm{otherwise},
    \end{cases}
\]
and on orientation reversing $V$ by
\[
    \ul\pi_V \left( \tau_{\geq 1} K_G(k)^h \right) \cong
    \begin{cases}
        \varoplus^i & |V|=2i-1>0, \\
        0 & \textrm{otherwise}.
    \end{cases}
\]
\end{proposition}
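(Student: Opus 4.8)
The plan is to compute these Mackey functors directly from a homotopy fixed point spectral sequence, taking advantage of the fact that $(-)^h = F(EG_+,-)$ is exact. Since $\tau_{\geq 1}K_G(k)$ is by definition $\hofib(K_G(k)\xrightarrow{\tau_{\leq 0}}\HZ)$, applying $F(EG_+,-)$ to this fiber sequence identifies the bottom horizontal fiber of the pullback square in \cref{thm: pullback theorem} with the geometric completion of the $1$-connective cover,
\[
    \tau_{\geq 1}K_G(k)^h \simeq \big(\tau_{\geq 1}K_G(k)\big)^h.
\]
(The equivalence of fibers recorded just before the statement is then precisely the assertion that the connective cover is already geometrically complete.) So it suffices to run the HFPSS
\[
    \ul E^2_{s,V} = \ul H^{-s}\big(G;\pi^e_V(\tau_{\geq 1}K_G(k))\big)\Rightarrow \ul\pi_{s+V}\big(\tau_{\geq 1}K_G(k)\big)^h,
\]
whose input only requires the underlying $G$-module homotopy of the connective cover. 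Crucially, this makes the argument non-circular: we never need $\ul\pi_V K_G(k)$ itself.

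First I would record the underlying homotopy. Because $\tau_{\leq 0}$ kills exactly $\pi_0$, the underlying spectrum of $\tau_{\geq 1}K_G(k)$ is the $1$-connective cover of $K(k)$, so by \cref{lemma:UnderlyingGmodules} one has $\pi^e_V(\tau_{\geq 1}K_G(k))\cong K_{|V|}(k)$ (twisted by $\sigma$ when $V$ is orientation-reversing) for $|V|\geq 1$ and $0$ for $|V|\leq 0$. The key observation is then that the $\ul E^2$-page is concentrated on the single column $s=0$: the only nonzero coefficient modules sit in odd degrees $|V|=2i-1>0$, where \cref{lemma:ominus,lemma:NormOplus} show the norm map is an isomorphism, forcing both $\ker N$ and $\coker N$ to vanish and hence all higher group cohomology to be zero. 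Thus only $\ul E^2_{0,V}=\ul H^0(G;\pi^e_V) = \ul R(K_{2i-1}(k))$ survives, which is $\varominus^i$ for $V$ orientation-preserving and $\varoplus^i$ for $V$ orientation-reversing. Since every differential $d^r\colon \ul E^r_{s,V}\to \ul E^r_{s-r,V+r-1}$ changes $s$, a spectral sequence supported on one column collapses at $\ul E^2$ with no extension problems, and reading off $\ul E^\infty_{0,V}$ yields exactly the stated answer.

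I do not expect a serious obstacle here: all the genuine content is packaged into the norm-isomorphism \cref{lemma:ominus,lemma:NormOplus}, which is what degenerates the spectral sequence, and convergence is automatic since $\tau_{\geq 1}K_G(k)$ is bounded below and the $\ul E^2$-page is concentrated on a single line. The only points needing care are the bookkeeping of the orientation twist and confirming that passing to the connective cover does not alter the $G$-action computed in \cref{lemma:UnderlyingGmodules}. As an alternative one could instead use the long exact sequence of the fiber sequence $\tau_{\geq 1}K_G(k)^h\to K_G(k)^h\to\HZ^h$ together with \cref{prop: homotopy fixed point groups}, after checking that $\ul\pi_V\HZ^h$ accounts for precisely the $|V|\leq 0$ summands $\boxempty,\circ,\bullet,\olcirc$ appearing there; the direct spectral sequence argument above is preferable since it avoids computing $\ul\pi_V\HZ^h$ as a separate step.
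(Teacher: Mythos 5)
Your proof is correct, but it takes a genuinely different (and more direct) route than the paper. The paper deduces the proposition from the long exact sequence of the fibration $\tau_{\geq 1} K_G(k)^h \to K_G(k)^h \to \HZ^h$: it first observes that $K_G(k)^h\to\HZ^h$ is an equivalence in total degrees $\leq 0$, extracts vanishing of the fiber there, and for $|V|>0$ reads off $\ul\pi_V(\tau_{\geq 1}K_G(k)^h)\cong\ul\pi_V K_G(k)^h$ from the already-established \cref{prop: homotopy fixed point groups} — essentially the "alternative" you sketch at the end. You instead use exactness of $F(EG_+,-)$ to identify the fiber of the completed map with the completion of the fiber, and then run the HFPSS for $\tau_{\geq 1}K_G(k)$ itself. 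The payoff of your version is that the $\ul E^2$-page is concentrated in the single column $s=0$ (the $\pi_0$ contributions $\boxempty$, $\circ$, $\bullet$, $\olcirc$ that populate negative degrees in \cref{figure: HFPSS} are gone), so collapse, absence of extension problems, and strong convergence are all immediate; the paper's version avoids a second spectral sequence at the cost of some long-exact-sequence bookkeeping. Both arguments rest on the same essential input, namely the norm isomorphisms of \cref{lemma:ominus,lemma:NormOplus} killing all higher group cohomology of the coefficient modules from \cref{lemma:UnderlyingGmodules}. Your parenthetical identification of the two meanings of $\tau_{\geq 1}K_G(k)^h$ is exactly the point the paper leaves implicit in its notation, and it is worth stating as you do.
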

\begin{proof}
    The homotopy fixed points spectral sequences shows that the map $K_G(k)^h\to \HZ^h$ is an equivalence for all $V\in RO(G)$ with $|V|\leq 0$.  Thus for $|V|< 0$ the long exact sequence associated to the fibration 
    \[
        \tau_{\geq 1} K_G(k)^h \to K_G(k)^h\to \HZ^h
    \]
    takes the form
    \[
        \dots \to \ul{\pi}_{V+1}K_G(K)^h\xrightarrow{\cong} \ul{\pi}_{V+1}\HZ^h\to \ul{\pi}_V (\tau_{\geq 1} K_G(k)) \to \ul{\pi}_VK_G(k)^h\xrightarrow{\cong} \ul{\pi}_V\HZ^h\to \dots
    \]
    which shows that $\ul{\pi}_V (\tau_{\geq 1} K_G(k))=0$ when $|V|< 0$.  When $|V|=0$ the claim holds because the Mackey functor $\ul{\pi}_{V+1}\HZ^h=0$.
    
    For $|V|>0$, the long exact sequence takes the form
    \[
        \dots \to 0\to \ul{\pi}_V (\tau_{\geq 1} K_G(k))\to \ul{\pi}_VK_G(k)^h\to 0\to \dots
    \]
    so we have $\ul{\pi}_V (\tau_{\geq 1} K_G(k))\cong \ul{\pi}_VK_G(k)^h$ for $|V|>0$.  Putting all this together with \cref{prop: homotopy fixed point groups} we obtain the result.    
\end{proof}

    Note that the groups appearing in the Mackey functors $\varoplus^i$ and $\varominus^i$ consist of torsion abelian groups with $q$ invertible. If $p$ denotes the characteristic of $k$, it follows that the $p$-completion of all these Mackey functors are zero, hence $(\tau_{\geq 1} K_G(k))^{\wedge}_{p}\simeq 0$. 
    \begin{corollary}
        For $q = p^r$ then there  there is an equivalence $K_{C_n}(\bbF_{q^n})^{\wedge}_{p}\simeq \HZ^{\wedge}_{p}$.
    \end{corollary}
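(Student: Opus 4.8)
The plan is to read the corollary off the fiber sequence of $G$-spectra
\[
    \tau_{\geq 1} K_G(k) \to K_G(k) \xrightarrow{\tau_{\leq 0}} \HZ
\]
from \cref{eq: intro fibration sequence}, which is a formal consequence of the homotopy pullback in \cref{thm: pullback theorem}. First I would use that $p$-completion of $G$-spectra is a localization and hence an exact functor: it sends fiber sequences to fiber sequences and the zero spectrum to the zero spectrum. Applying $(-)^{\wedge}_p$ to the sequence above then produces a fiber sequence
\[
    (\tau_{\geq 1} K_G(k))^{\wedge}_p \to K_G(k)^{\wedge}_p \xrightarrow{(\tau_{\leq 0})^{\wedge}_p} \HZ^{\wedge}_p .
\]

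Next I would invoke the vanishing of the completed fiber. By \cref{prop: homotopy of the fiber}, every nonzero homotopy Mackey functor of $\tau_{\geq 1} K_G(k)$ is one of the $\varominus^i$ or $\varoplus^i$, and these take values in finite cyclic groups of order $q^{ni/m}-1$ (respectively $(-1)^{n/m}q^{ni/m}-1$). Since $q = p^r$, each such order is congruent to $\pm 1 \bmod p$ and is therefore coprime to $p$; consequently each of these groups, and hence each homotopy Mackey functor of the fiber, has trivial $p$-completion. As $\tau_{\geq 1} K_G(k)$ is bounded below, this forces $(\tau_{\geq 1} K_G(k))^{\wedge}_p \simeq 0$, which is exactly the observation recorded immediately before the statement.

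With the fiber $p$-completing to zero, the displayed fiber sequence shows that $(\tau_{\leq 0})^{\wedge}_p\colon K_G(k)^{\wedge}_p \to \HZ^{\wedge}_p$ is an equivalence, giving the desired identification $K_{C_n}(\bbF_{q^n})^{\wedge}_p \simeq \HZ^{\wedge}_p$. This argument is entirely formal: the only points meriting care are the compatibility of equivariant $p$-completion with the fiber sequence and the vanishing of the completed fiber, and both are standard, with the latter supplied by \cref{prop: homotopy of the fiber} together with the coprimality just noted. I therefore expect no substantive obstacle.
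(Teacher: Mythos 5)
Your proposal is correct and matches the paper's argument: the paper likewise observes that the Mackey functors $\varominus^i$ and $\varoplus^i$ consist of torsion groups of order prime to $p$, so $(\tau_{\geq 1} K_G(k))^{\wedge}_p \simeq 0$, and the corollary follows from the fiber sequence $\tau_{\geq 1} K_G(k) \to K_G(k) \to \HZ$.
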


\begin{theorem}\label{thm: K groups split}
    The $RO(G)$-graded homotopy groups of $K_G(k)$ are given by
    \[
        \ul{\pi}_{V}(K_{C_n}(k))\cong \ul{\pi}_{V} \left( \tau_{\geq 1} K_G(k) \right)\varoplus \ul{\pi}_{V}(\HZ).
    \]
    for all $V\in RO(C_n)$.
\end{theorem}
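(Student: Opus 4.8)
The plan is to read off the splitting from the long exact sequence in $RO(G)$-graded homotopy Mackey functors attached to the fiber sequence $\tau_{\geq 1} K_G(k) \to K_G(k) \xrightarrow{\tau_{\leq 0}} \HZ$. Writing $F = \tau_{\geq 1} K_G(k)$, this sequence reads
\[
    \cdots \to \ul\pi_{V}(F) \to \ul\pi_V(K_G(k)) \to \ul\pi_V(\HZ) \xrightarrow{\partial} \ul\pi_{V-1}(F) \to \cdots
\]
for each $V \in RO(G)$, where $V-1$ denotes the shift by the trivial one-dimensional representation. Everything reduces to two claims: that every connecting map $\partial$ vanishes, yielding short exact sequences $0 \to \ul\pi_V(F) \to \ul\pi_V(K_G(k)) \to \ul\pi_V(\HZ) \to 0$, and that each of these splits. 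Both claims rest on the same two facts. First, by \cref{prop: homotopy of the fiber} together with the equivalence $\tau_{\geq 1} K_G(k) \xrightarrow{\sim} \tau_{\geq 1} K_G(k)^h$ coming from \cref{thm: pullback theorem}, the Mackey functor $\ul\pi_W(F)$ is nonzero only when $|W| = 2i-1 > 0$, in which case it is $\varominus^i$ or $\varoplus^i$, both of which lie in the image of the fixed-point functor $\ul R$ by \cref{lemma:ominus,lemma:NormOplus}. Second, the underlying spectrum of $\HZ$ is $H\bbZ$, so $\ul\pi_W(\HZ)(G/e) = \pi_{|W|}(H\bbZ)$ is $\bbZ$ for $|W|=0$ and vanishes otherwise.

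First I would dispense with the connecting maps. Since the target $\ul\pi_{V-1}(F)$ lies in the image of $\ul R$, the adjunction $\Hom(\ul M, \ul R(N)) \cong \Hom_{\bbZ G}(\ul M(G/e), N)$ shows $\partial$ is determined by the $G$-module map it induces at level $G/e$. But $\partial$ has a chance to be nonzero only when $\ul\pi_{V-1}(F) \neq 0$, which forces $|V| = 2i > 0$; for such $V$ we have $\ul\pi_V(\HZ)(G/e) = \pi_{2i}(H\bbZ) = 0$, so $\partial = 0$. This produces the short exact sequences for every $V$.

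To split them, I would invoke \cref{prop: no extensions}: when $\ul\pi_V(F) \neq 0$ we have $|V| = 2i-1 > 0$, so the quotient $\ul\pi_V(\HZ)$ has vanishing underlying group $\pi_{2i-1}(H\bbZ) = 0$, while the sub $\ul\pi_V(F)$ is in the image of $\ul R$; hence $\Ext^1(\ul\pi_V(\HZ), \ul\pi_V(F)) = 0$ and the sequence splits as Mackey functors, giving $\ul\pi_V(K_G(k)) \cong \ul\pi_V(F) \oplus \ul\pi_V(\HZ)$. When $\ul\pi_V(F) = 0$ the splitting is vacuous. There is no genuinely difficult analytic step here; the content is purely the degree bookkeeping that both arguments require, namely that the fiber contributes only in odd positive degrees $|V| = 2i-1 > 0$, which is precisely where the underlying homotopy of $\HZ$ vanishes. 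Keeping these two vanishing ranges aligned is what lets the $\ul R$-adjunction kill the connecting maps and lets \cref{prop: no extensions} kill the extensions, so I expect the only real care to be in tracking these degrees through the long exact sequence.
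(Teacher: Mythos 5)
Your proposal is correct and follows essentially the same route as the paper: both arguments run the long exact sequence of the fiber sequence $\tau_{\geq 1}K_G(k)\to K_G(k)\to \HZ$, kill the connecting maps using that $\ul\pi_\star(\tau_{\geq 1}K_G(k))$ lands in the image of $\ul R$ while $\ul\pi_V(\HZ)(G/e)=\pi_{|V|}H\bbZ$ vanishes in positive degrees, and then split the resulting short exact sequences via \cref{prop: no extensions}. The only cosmetic difference is that the paper organizes the argument by cases on $|V|$ rather than isolating the vanishing of $\partial$ as a uniform first step.
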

\begin{proof}
    We consider the long exact sequence
    \[
            \dots \to \ul{\pi}_{V+1}\HZ\to \ul{\pi}_{V} (\tau_{\geq 1} K_G(k)) \to \ul{\pi}_VK_G(k)\to \ul{\pi}_V\HZ\to \ul{\pi}_{V-1} (\tau_{\geq 1} K_G(k)) \to \cdots
        \]
        associated to the fibration 
        \[
            \tau_{\geq 1} K_G(k) \to K_G(k)\to \HZ
        \]
        for various choices of $V$.  Note that because $\ul{\pi}_V(\tau_{\geq 1} K_G(k))=0$ for all $V$ with $|V|\leq0$ we immediately obtain the result for $V$ with non-positive total degree.  
        
        When $|V|=2i>0$ we have an exact sequence
        \[
            \ul{\pi}_{V+1}\HZ\to 0 \to \ul{\pi}_V K_{G}(k)\to \ul{\pi}_{V}\HZ\to M^{i}
        \]
        where $M^i$ is either $\varoplus^i$ or $\varominus^i$.  Note that we have \[\ul{\pi}_{V}(\HZ)(G/e) =\pi^e_{V}(\HZ) = \pi_{|V|}H\mathbb{Z}=0.\]  Since both $\varoplus^i$ and $\varominus^i$ are in the image of the right adjoint the the evaluation functor  $\Mack_{G}\to \Mod_{\bbZ G}$, we see that the map $\ul{\pi}_{V}\HZ\to M^{i}$ must be the zero map and we have an isomorphism 
        \[
        \ul{\pi}_V K_{G}(k)\xrightarrow{\cong} \ul{\pi}_{V}\HZ\cong \ul{\pi}_V(\tau_{\geq 1} K_G(k))\oplus \ul{\pi}_{V}\HZ
        \]
        for $|V| =2i>0$.

        For $|V| =2i-1>0$ the long exact sequence has the form
        \[
            \ul{\pi}_{V+1}\HZ\to \ul{\pi}_V(\tau_{\geq 1} K_G(k)) \to \ul{\pi}_V K_{G}(k)\to \ul{\pi}_{V}\HZ\to 0.
        \]
        The same reasoning as the last case tells us that the leftmost map is the zero map so this is really an extension problem of the form
        \[
            0\to\ul{\pi}_V(\tau_{\geq 1} K_G(k)) \to \ul{\pi}_V K_{G}(k)\to \ul{\pi}_{V}\HZ\to 0.
        \]
        and by \cref{prop: no extensions} this extension problem is trivial, giving us the desired splitting.
\end{proof}

\begin{remark}
    While we obtain a splitting of $K$-groups at every degree, we stress that this is not coming from a splitting of $G$-spectra.  Indeed, looking at underlying spectra this would be equivalent to $K(k)$ splitting as $H\mathbb{Z}\vee \tau_{\geq 1} K(k)$, which is not true. 
\end{remark}

\subsection{Geometric fixed points}\label{sec: geometric fixed points}

In this section, we show that calculating the geometric fixed points of $K_G(k)$ can also be reduced to computing the geometric fixed points of $\HZ$. For any finite group $G$ let $\EP$ denote a $G$-space with
\[
(\EP)^H \cong 
\begin{cases}
	* & H\neq G\\
	\emptyset & H=G.
\end{cases}
\] 
This space is unique up to $G$-homotopy equivalence and we write $\tEP$ for the unreduced suspension of $\EP$.  For any $G$-spectrum $X$ we write $X^{\Phi} = \tEP\wedge X$. Note that if $\ell$ is a prime number, and $G$ is a cyclic group of order $\ell$, there is a $G$-homotopy equivalence $\EP\simeq EG$ and thus $X^{\Phi}\cong \wt{X}$, as defined in \cref{sec: Tate}.
\begin{definition}
	The \emph{geometric fixed points} of $X$ are the spectrum $\Phi^G(X) = (X^{\Phi})^G$.   
\end{definition}

\begin{theorem}\label{cor: same geometric fixed points}
    There is an equivalence of spectra $\Phi^G(K_{G}(k))\to \Phi^G(\HZ)$.
\end{theorem}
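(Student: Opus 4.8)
The plan is to leverage the Tate-diagram comparison machinery already established, rather than computing geometric fixed points directly from the representation-sphere cell structure of $\tEP$. The key observation is that the geometric fixed point functor $\Phi^G(-) = (\tEP \wedge -)^G$ depends only on the smashing with $\tEP$, and that the map $K_G(k) \to \HZ$ has already been shown to induce good behavior on the ``singular'' part $\tEG \wedge X$. So the natural approach is to relate $X^\Phi = \tEP \wedge X$ to the already-understood $\wt{X} = \tEG \wedge X$ and $X^t$.

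First I would recall that smashing with $\tEP$ kills all cells of the form $G/H_+$ for proper $H < G$, so $\tEP \wedge X$ only sees the ``top'' geometric behavior, and that geometric fixed points are insensitive to the difference between $X$ and $X^h$ in the appropriate sense. Concretely, the cleanest route is to observe that $\Phi^G$ is a symmetric monoidal functor commuting with the relevant smash products, and that $\Phi^G(X) \simeq \Phi^G(\tEP \wedge X)$. The crux is then to produce an equivalence $\Phi^G(K_G(k)) \to \Phi^G(\HZ)$ from the map $K_G(k) \to \HZ$. I would build this out of the established fiber sequence
\[
    \tau_{\geq 1} K_G(k) \to K_G(k) \xrightarrow{\tau_{\leq 0}} \HZ,
\]
from \cref{thm: K groups split}. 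Since $\Phi^G$ is exact (it preserves cofiber sequences), applying it yields a cofiber sequence
\[
    \Phi^G(\tau_{\geq 1} K_G(k)) \to \Phi^G(K_G(k)) \to \Phi^G(\HZ),
\]
and it suffices to show the geometric fixed points of the $1$-connective cover vanish, i.e.\ $\Phi^G(\tau_{\geq 1} K_G(k)) \simeq 0$.

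The main obstacle is precisely this vanishing. To establish it, I would note that $\tau_{\geq 1} K_G(k)$ has homotopy Mackey functors that are always in the image of the fixed-point functor $\ul R$, built from the $\varominus^i$ and $\varoplus^i$ of \cref{prop: homotopy of the fiber}. These Mackey functors are purely torsion with $q$ invertible at each level, and more importantly each such $\varominus^i = \ul R(K_{2i-1}(k))$ is a \emph{cohomological} Mackey functor whose geometric fixed points vanish: smashing an Eilenberg--MacLane spectrum on such a Mackey functor with $\tEP$ produces a contractible spectrum because the relevant transfers are surjective (indeed the norm map is an isomorphism, per \cref{lemma:ominus,lemma:NormOplus}), forcing $\Phi^G$ to annihilate it. Alternatively, and perhaps more robustly, I would use the $p$-complete vanishing already recorded: since $(\tau_{\geq 1} K_G(k))^{\wedge}_p \simeq 0$ where $p$ is the characteristic of $k$, and geometric fixed points for a $p$-group are insensitive to inverting $p$ (the $E\mathcal P$ construction is built from representation spheres that become invertible integrally), one can deduce $\Phi^G(\tau_{\geq 1} K_G(k)) \simeq 0$. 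I expect verifying this vanishing carefully---choosing between the Mackey-functor argument and the completion argument, and confirming $\Phi^G$ interacts correctly with the characteristic-$p$ torsion---to be the delicate point; the rest follows formally from exactness of $\Phi^G$ applied to the fiber sequence, giving the asserted equivalence $\Phi^G(K_G(k)) \xrightarrow{\sim} \Phi^G(\HZ)$.
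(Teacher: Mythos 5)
Your overall strategy --- apply the exact functor $\Phi^G$ to the fiber sequence $\tau_{\geq 1}K_G(k)\to K_G(k)\to\HZ$ and reduce to showing $\Phi^G(\tau_{\geq 1}K_G(k))\simeq 0$ --- is a genuinely different route from the paper's, which instead observes that $X\to\wt X$ becomes an equivalence after $\Phi^G$ (since $\tEP\wedge\tEG\simeq\tEP$) and then simply quotes the equivalence $\wt{K_G(k)}\xrightarrow{\sim}\wt\HZ$ of \cref{prop: tilde equivalence}. Your reduction is sound and the vanishing you need is in fact true, but neither of the two justifications you offer for it holds up. The Mackey-functor argument identifies the wrong mechanism: surjectivity of transfers into the top level only controls $\pi_0$ of $\Phi^G$ of an Eilenberg--MacLane spectrum, while the higher homotopy is built from higher group homology (already for $G=C_\ell$ one has $\pi_n\Phi^{C_\ell}(H\ul M)\cong H_{n-1}(C_\ell;\ul M(C_\ell/e))$ for $n\geq 2$), and ``cohomological Mackey functor'' is not a sufficient hypothesis --- $\ul\bbZ$ is cohomological and $\Phi^{C_\ell}(H\ul\bbZ)$ is far from contractible. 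What actually makes $\varominus^i$ and $\varoplus^i$ work is the full strength of \cref{lemma:ominus,lemma:NormOplus}: the norm maps are isomorphisms at every level, so both the cokernel of the transfer and all higher homology vanish; even then you must assemble the Postnikov layers and treat general cyclic $G$, where $\tEP\neq\tEG$ and the displayed formula for $\Phi^G H\ul M$ no longer applies. The $p$-completion argument rests on a false premise: $G=C_n$ is the Galois group of a degree-$n$ extension and its order is unrelated to $p=\mathrm{char}(k)$ (for $\bbF_9/\bbF_3$ one has $G=C_2$ and $p=3$), so no statement about geometric fixed points of $p$-groups is available.

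If you want to complete your route rather than adopt the paper's, the cleanest repair is the following. Set $F=\tau_{\geq 1}K_G(k)$. By \cref{thm: pullback theorem} the map $F\to F^h$ is an equivalence, and $F^t\simeq 0$ because the Tate cohomology of every $\pi^e_*F$ vanishes by \cref{lemma:ominus,lemma:NormOplus}; hence $\wt F\to F^t$ is a connective cover of the zero spectrum by \cref{lemma: Greenlees-Meier Connected cover lemma}, so $\wt F\simeq 0$. Since $\tEP\wedge\tEG\simeq\tEP$, we get $\tEP\wedge F\simeq\tEP\wedge\wt F\simeq 0$ and therefore $\Phi^G(F)\simeq 0$ for every cyclic $G$. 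At that point, however, you have essentially reassembled the paper's own proof, which shortcuts the fiber sequence entirely.
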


We thank an anonymous referee for indicating \cref{cor: same geometric fixed points} follows from the equivalence of $G$-spectra $\wt{K_G(k)}\simeq \wt\HZ$ (\cref{prop: tilde equivalence}) and the following lemma.

\begin{lemma}
	For any $G$-spectrum $X$ the canonical map
	\[
		X\to \wt{X}
	\]
	becomes an equivalence of spectra after applying $\Phi^G$.
\end{lemma}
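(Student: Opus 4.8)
The goal is to show that for any $G$-spectrum $X$, the canonical map $X \to \wt{X} = \tEP \wedge X$ induces an equivalence on geometric fixed points $\Phi^G$. The cleanest approach is to recall the defining property of geometric fixed points: $\Phi^G(X) = (\tEP \wedge X)^G$. Applying $\Phi^G$ to the map $X \to \wt{X}$ therefore amounts to comparing $(\tEP \wedge X)^G$ with $(\tEP \wedge \tEP \wedge X)^G$, so the whole question reduces to showing that smashing with $\tEP$ a second time changes nothing.

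The plan is to exploit idempotency of $\tEP$. First I would observe that the canonical map $X \to \wt{X}$ is induced by the map $S^0 \to \tEP$ coming from the cofiber sequence $\EP_+ \to S^0 \to \tEP$ defining $\tEP$. Smashing this cofiber sequence with $\tEP$ gives
\[
    \EP_+ \wedge \tEP \to \tEP \to \tEP \wedge \tEP.
\]
The key fact is that $\EP_+ \wedge \tEP \simeq *$: indeed $\EP$ is built from cells $G/H_+$ with $H \neq G$, and on each such cell $\tEP$ becomes nonequivariantly contractible after smashing, since $(\tEP)^H \simeq *$ for all proper $H < G$ while $\EP$ itself has no $G$-fixed cells. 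Thus the map $\tEP \to \tEP \wedge \tEP$ is an equivalence, i.e. $\tEP$ is a smashing idempotent. This is a standard and well-known property of $\tEP$, so I would cite it rather than reprove it in detail.

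Granting idempotency, smashing the equivalence $\tEP \xrightarrow{\sim} \tEP \wedge \tEP$ with $X$ shows that $\tEP \wedge X \to \tEP \wedge \tEP \wedge X$, which is precisely the map $\wt{X} \to \wt{\wt{X}}$ induced by $X \to \wt{X}$, is an equivalence of $G$-spectra. Taking $G$-fixed points then gives that $\Phi^G(X) = (\tEP \wedge X)^G \to (\tEP \wedge \tEP \wedge X)^G = \Phi^G(\wt{X})$ is an equivalence of spectra, which is exactly the claim. I expect the only subtle point to be the verification that $\EP_+ \wedge \tEP$ is contractible, i.e. the idempotency of $\tEP$; everything after that is formal. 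Since this idempotency is the foundational property underlying the very definition of geometric fixed points, the argument is short, and the main obstacle is essentially bookkeeping about which fixed-point data of $\tEP$ vanish on the cells of $\EP$.
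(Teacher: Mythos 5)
Your argument proves a true statement, but not the one in the lemma: you have conflated $\wt{X}$ with $X^{\Phi}$. In this paper $\wt{X}$ is defined (in the section on the Tate diagram) as $\tEG\wedge X$, where $\tEG$ is the cofiber of $EG_+\to S^0$ for $EG$ a \emph{free} contractible $G$-space, whereas $X^{\Phi}=\tEP\wedge X$ is built from the universal space for the family of \emph{proper} subgroups. These two constructions agree only when $G$ is cyclic of prime order (as the paper itself notes), while the lemma is needed for general $G=C_n$. The canonical map $X\to\wt{X}$ is therefore induced by $S^0\to\tEG$, not by $S^0\to\tEP$, and applying $\Phi^G$ compares $(\tEP\wedge X)^G$ with $(\tEP\wedge\tEG\wedge X)^G$, not with $(\tEP\wedge\tEP\wedge X)^G$. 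What you establish --- the idempotency of $\tEP$, i.e.\ $\EP_+\wedge\tEP\simeq *$ --- is correct but does not address the map in question.

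The repair is close to what you wrote: the fact you need is $EG_+\wedge\tEP\simeq *$ rather than $\EP_+\wedge\tEP\simeq *$. This follows either cell-by-cell ($EG$ is built from free cells $G/e_+$, and $G/e_+\wedge\tEP\cong G_+\wedge\res^G_e\tEP\simeq *$ since $\tEP$ is nonequivariantly contractible, the trivial subgroup being proper), or, as the paper does, from the cofiber sequence $\EP_+\wedge EG_+\to EG_+\to\tEP\wedge EG_+$ together with the observation that $\EP\times EG\to EG$ is an equivariant map between free contractible $G$-spaces and hence a $G$-homotopy equivalence. With that substitution the rest of your outline (smash the resulting equivalence with $X$ and take $G$-fixed points) goes through.
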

\begin{proof}
	It suffices to check that the map $\tEP\to \tEP\wedge \tEG$, obtained by smashing the canonical map $S^0\to \tEG$ with $\tEP$, is an equivalence of $G$-spectra.  For this, it suffices to check that the fiber $\tEP\wedge EG_+$ is contractible.  For this, consider the fiber sequence
	\[
	\EP_+\wedge EG_+\to EG_+\to \tEP\wedge EG_+.
	\]
	We are done if we show that the left map is an equivalence.  For this, we observe that this map is obtained by applying the suspension functor to the map of $G$-spaces
	\[
	\EP\times EG\to EG
	\]
	obtained by collapsing $\EP$ to a point.  This is an equivariant map between free contractible $G$-spaces and is therefore a $G$-homotopy equivalence.
\end{proof}

Therefore, the computation of $\Phi^G K_G(k)$ reduces to the following well-known computation of $\Phi^G\HZ$. We did not know a complete reference in the literature so we give the computation here. 

\begin{proposition} \label{prop: geometric fixed points of HZ}
    Let $G$ be a finite cyclic group.
    \begin{propenum}
        \item \label{prop: geometric fixed points of HZ for p group}
        If $G$ is a nontrivial cyclic $\ell$-group for a prime $\ell$, then as a graded ring,
        \[
            \pi_* \Phi^G\HZ \cong \bbZ/\ell[x], \qquad |x|=2.
        \]
        \item \label{prop: geometric fixed points of HZ for non p group}
        If $G$ is a cyclic group whose order has at least two prime factors, then 
        \[
            \Phi^G\HZ\simeq 0.
        \]
    \end{propenum}
\end{proposition}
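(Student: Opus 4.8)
The plan is to run the isotropy-separation cofiber sequence
\[
    \EP_+\sma\HZ \to \HZ \to \tEP\sma\HZ
\]
and apply genuine $G$-fixed points, which (since $(-)^G$ is exact) yields a cofiber sequence of spectra $(\EP_+\sma\HZ)^G \to \HZ^G \to \Phi^G\HZ$. Throughout I would use that $\Phi^G$ is symmetric monoidal, so $\Phi^G\HZ$ is a ring spectrum and $\HZ^G\to\Phi^G\HZ$ is a ring map. Both parts of the statement flow from analyzing the associated long exact sequence.

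For the vanishing statement (b), it suffices to compute $\pi_0$. The piece $\pi_0(\EP_+\sma\HZ)^G$ is connective, so the long exact sequence identifies $\pi_0\Phi^G\HZ$ with the cokernel of $\pi_0(\EP_+\sma\HZ)^G\to\pi_0\HZ^G=\bbZ$, i.e.\ with the Brauer quotient of $\ul\bbZ$: the quotient of $\ul\bbZ(G/G)=\bbZ$ by the images of the transfers from the maximal proper subgroups. For the constant Mackey functor $\tr^G_H$ is multiplication by $[G:H]$. When $|G|$ has two distinct prime divisors $\ell_1,\ell_2$, the two maximal subgroups of prime index contribute $\ell_1\bbZ+\ell_2\bbZ=\bbZ$ since $\gcd(\ell_1,\ell_2)=1$, so $\pi_0\Phi^G\HZ=0$. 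As $\Phi^G\HZ$ is a ring spectrum, $\pi_0=0$ forces its unit $S^0\to\Phi^G\HZ$ to be null, hence $\id$ is null and $\Phi^G\HZ\simeq 0$, proving (b).

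For the additive part of (a), the key observation is that for a cyclic $\ell$-group the family $\cP$ of proper subgroups is exactly the set of subgroups of the unique maximal subgroup $N$, so $\EP\simeq\mathrm{infl}_{G/N}^G\,E(G/N)$ with $G/N\cong C_\ell$. I would use this to evaluate $(\EP_+\sma\HZ)^G$: its cells are inflated from the free cells of $E(G/N)$, each orbit $(G/N)_+$ contributing a copy of $H\bbZ$, and the resulting cellular complex is that of $E(G/N)$ with constant coefficients. Hence $\pi_*(\EP_+\sma\HZ)^G\cong H_*(C_\ell;\bbZ)$, which is $\bbZ$ in degree $0$ and $\bbZ/\ell$ in each positive odd degree. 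Feeding this and $\pi_*\HZ^G$ (namely $\bbZ$ concentrated in degree $0$, with the connecting map the transfer $\tr^G_N=\times\ell$) into the long exact sequence gives $\pi_n\Phi^G\HZ\cong\bbZ/\ell$ for even $n\geq 0$ and $0$ otherwise, uniformly in $n$ — the additive structure of $\bbZ/\ell[x]$, $|x|=2$.

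The remaining, and main, obstacle is the ring structure. When $G=C_\ell$ has prime order, $\tEP\simeq\tEG$ and $\Phi^{C_\ell}\HZ\simeq\wt\HZ^{\,C_\ell}$; by \cref{cor: Tate connected cover} the map $\wt\HZ\to\HZ^t$ is a connective cover, so on fixed points $\Phi^{C_\ell}\HZ$ is the connective cover of the Tate spectrum $\HZ^{tC_\ell}$, whose homotopy ring is the $2$-periodic Tate cohomology ring $\wH^{-*}(C_\ell;\bbZ)\cong\bbZ/\ell[x^{\pm1}]$; taking the connective cover yields $\bbZ/\ell[x]$ as a graded ring. For a general cyclic $\ell$-group the Tate spectrum no longer computes the geometric fixed points, so instead I would transport the multiplication through the inflation identification above, recognizing the degree-$2$ generator as $u_\lambda a_\lambda^{-1}$ in the $a_\lambda$-localized $RO(G)$-graded ring $\pi^G_\star\HZ$, where $\lambda$ is the inflation of a faithful $C_\ell$-representation, and checking that its powers remain nonzero. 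Verifying that this identification is multiplicative — equivalently, that no higher product vanishes — is the delicate point, and it can alternatively be extracted from the computations of $\pi^G_\star\HZ$ cited in the introduction.
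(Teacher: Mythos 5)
Your argument for part (b) is correct and is genuinely different from the paper's: you identify $\pi_0\Phi^G\HZ$ with the Brauer quotient $\bbZ/\sum_{H<G}\mathrm{im}(\tr^G_H)$, observe that transfers from the two maximal subgroups of coprime prime indices already generate $\bbZ$, and then use that a unital ring spectrum with $1=0$ in $\pi_0$ is contractible. The paper instead inverts the Euler class $\alpha_{\wt\rho}$ and shows that the induced Euler classes $\alpha_{\Lambda_s},\alpha_{\Lambda_t}$ (which divide a power of $\alpha_{\wt\rho}$) are torsion for two different primes, so the localization vanishes. Your route is shorter and needs less input (no norms, no knowledge of $\HZ^{C_s}_{-\lambda_s}$); the paper's route is what generalizes when one wants to see the vanishing as a statement about the $a_\lambda$-localized $RO(G)$-graded ring. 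Your additive computation in part (a) is also correct and shares the paper's key observation --- that $\EP$ for a cyclic $\ell$-group is inflated from $E(G/N)\simeq EC_\ell$ --- though you feed it into the isotropy-separation long exact sequence rather than reducing the whole computation to the group $C_\ell$ as the paper does.

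The genuine gap is exactly where you flag it: the ring structure for $G=C_{\ell^n}$ with $n\geq 2$. Your proposal to exhibit the degree-$2$ generator as $u_\lambda a_\lambda^{-1}$ in $\alpha^{-1}\pi^G_\star\HZ$ and "check that its powers remain nonzero" is not carried out, and it is not a routine verification --- it amounts to redoing a chunk of the $RO(C_{\ell^n})$-graded computation of $\HZ_\star$, which defeats the purpose of the lemma. The fix is to push your inflation observation one step further, as the paper does: the identification $\tEP_{C_{\ell^n}}\simeq f^*\tEP_{C_\ell}$ (for $f\colon C_{\ell^n}\to C_\ell$ the quotient) induces an isomorphism of Bredon chain complexes
\[
C_*^{C_{\ell^n}}(f^*\tEP_{C_\ell};\ul\bbZ)\cong C_*^{C_\ell}(\tEP_{C_\ell};\ul\bbZ),
\]
hence an identification $\pi_*\Phi^{C_{\ell^n}}\HZ\cong\pi_*\Phi^{C_\ell}\HZ$ compatible with products. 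This reduces the multiplicative statement to the prime-order case, which you and the paper both settle the same way: $\Phi^{C_\ell}\HZ\simeq\wt\HZ^{C_\ell}$ is the connective cover of $\HZ^{tC_\ell}$ by \cref{cor: Tate connected cover}, whose homotopy ring is the $2$-periodic Tate cohomology ring, with non-negative part $\bbZ/\ell[x]$.
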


We begin with the case where $|G|$ is a power of a prime $\ell$. The $\ell=2$ case was proven by Hill--Hopkins--Ravenel \cite[Proposition 3.18]{HHR}. We learned of the following proof technique for the general case from a \href{https://mathoverflow.net/a/179933}{MathOverflow answer by Justin Noel}.
 
\begin{proof}[Proof of \cref{prop: geometric fixed points of HZ for p group}]
    First suppose that $G = C_{\ell^n}$ and for all $n$ write $\tEP_{C_n}$ for unreduced suspension of the associated universal space. For all $n>1$ let $f\colon C_{\ell^n}\to C_{\ell}$ denote the quotient by the subgroup $C_{\ell^{n-1}}$.  Checking the fixed points for all subgroups, we see that $f^*\tEP_{C_\ell}$ is a model for $\tEP_{C_{\ell^n}}$.  We have an isomorphism
    \[
        \Phi^{C_{\ell^n}}(\HZ)_{*}\cong \HZ^{C_{\ell^n}}_*(\tEP_{C_{\ell^n}}) = \HZ^{C_{\ell^n}}_*(f^*\tEP_{C_{\ell}})\cong \HZ^{C_\ell}_*(\tEP_{C_\ell})
    \]
    where the last isomorphism comes from an isomorphism of Bredon chains 
    \[
    C_*^{C_{\ell^n}}(f^*\tEP_{C_\ell};\ul{\bbZ}) \cong C_*^{C_\ell}(\tEP_{C_\ell};\ul{\bbZ}).
    \] 
    Thus the computation for $G = C_{\ell^n}$ is reduced to the case $n=1$.
    
    When $n=1$ we have $\wt{E}C_\ell = \tEP_{C_{\ell}}$ and so there is an equality \[\Phi^G(\HZ) = \wt{\HZ}^G. \]
    By \cref{cor: Tate connected cover}, the right hand side is the connective cover of $\HZ^{tG}$.  Using the Tate spectral sequence (\cite[Theorem 10.3]{Greenlees--May})
    \[
        E^2_{r,s} = \widehat{H}^r(C_\ell;\pi^{C_\ell}_s\HZ)\Rightarrow \pi_{r+s}^{C_\ell}\HZ^{tC_\ell}
    \]
    we see that $\pi_*^{C_\ell}\HZ^{tC_\ell}$ is isomorphic, as a graded ring, to the Tate cohomology of $C_\ell$. Taking the non-negative part, we see that $\pi_*\Phi^{C_\ell}(\HZ)$ is isomorphic to the group cohomology ring of $C_\ell$, as claimed.
\end{proof}

Before handing the case where $n$ has two prime factors, we need some notation and a lemma.  The equivariant homotopy groups of $X^{\Phi}$ can be understood as a localization of the homotopy groups of $X$.  Let $\tilde{\rho}$ denote the reduced regular representation of $G$ and let $\alpha\colon S^0\to S^{\wt{\rho}}$ be the inclusion of the the fixed points $\{0,\infty\}$.  Since the reduced regular representation has trivial fixed points the map $\alpha$ is not null homotopic and represents a non-trivial class in $\pi^G_{-\wt{\rho}}(\mathbb{S}_G)$.

\begin{lemma}
	For any $G$-spectrum $X$ the homotopy groups of $\Phi^G(X)$ are given by the localization \[\pi_*(\Phi^G(X))\cong \alpha^{-1}\pi_*^G(X).\]
\end{lemma}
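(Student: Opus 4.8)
The plan is to identify the geometric fixed points $\Phi^G(X)$ with a mapping telescope that inverts $\alpha$, and then to read off the homotopy groups. The essential geometric input is that $\tEP$ is a model for the infinite representation sphere $\colim_n S^{n\wt{\rho}}$, where the colimit is taken along the maps $S^{n\wt{\rho}}\xrightarrow{\alpha\wedge\id} S^{(n+1)\wt{\rho}}$ obtained by smashing with $\alpha\colon S^0\to S^{\wt{\rho}}$.

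First I would establish this identification by computing $H$-fixed points for every $H\leq G$. Since $\wt{\rho}^{G}=0$, each sphere $S^{n\wt{\rho}}$ has $G$-fixed points $S^0$ and the structure maps restrict to the identity there, so $\bigl(\colim_n S^{n\wt{\rho}}\bigr)^G\simeq S^0$. For a proper subgroup $H<G$ the fixed representation $\wt{\rho}^H$ has positive dimension $[G:H]-1$, so $\bigl(S^{n\wt{\rho}}\bigr)^H$ is a sphere whose dimension grows without bound, the structure maps are the equatorial inclusions, and the colimit is the contractible space $S^{\infty}$. Thus $\colim_n S^{n\wt{\rho}}$ has exactly the fixed points characterizing $\tEP$, yielding a $G$-equivalence $\tEP\simeq \colim_n S^{n\wt{\rho}}$.

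Next, smashing with $X$ (which commutes with the sequential colimit) gives
\[
    X^{\Phi}=\tEP\wedge X\simeq \colim_n\bigl(S^{n\wt{\rho}}\wedge X\bigr),
\]
the mapping telescope of $X\xrightarrow{\alpha}\sus^{\wt{\rho}}X\xrightarrow{\alpha}\sus^{2\wt{\rho}}X\to\cdots$. Applying genuine $G$-fixed points and using that $(-)^G$ preserves filtered homotopy colimits, we obtain $\Phi^G(X)\simeq \colim_n\bigl(\sus^{n\wt{\rho}}X\bigr)^G$. Finally, since $\pi_*$ commutes with sequential colimits and $\pi^G_k(\sus^{n\wt{\rho}}X)\cong \pi^G_{k-n\wt{\rho}}(X)$ with transition maps given by multiplication by $\alpha$, we conclude
\[
    \pi_k(\Phi^G X)\cong \colim_n \pi^G_{k-n\wt{\rho}}(X)=\bigl(\alpha^{-1}\pi^G_{\star}(X)\bigr)_k,
\]
which is precisely the claimed localization.

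The main obstacle is the first step: carefully verifying the equivalence $\tEP\simeq \colim_n S^{n\wt{\rho}}$ via the fixed-point computation, and pinning down that the colimit structure maps are exactly smashing with $\alpha$ so that the telescope really computes the $\alpha$-localization. The remaining ingredients—that smashing with $X$ and the genuine fixed-point functor both commute with these sequential homotopy colimits—are standard, and everything after the telescope identification is formal bookkeeping of the $RO(G)$-grading.
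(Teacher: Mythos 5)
Your proof is correct and is essentially the paper's approach: the paper simply cites \cite[Proposition 3.18]{HHR} and notes that the argument carries over with $\sigma$ replaced by $\wt\rho$, and the proof of that cited result is exactly the telescope identification $\tEP\simeq\colim_n S^{n\wt\rho}$ followed by commuting $(-\wedge X)$, $(-)^G$, and $\pi_*$ past the sequential colimit. You have merely written out the details that the paper leaves to the reference.
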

\begin{proof}
	The statement when $G=C_{2^n}$ is  \cite[Proposition 3.18]{HHR}.  The proof in the general case is identical, except that we replace the role of the sign representation $\sigma$ from \cite{HHR} with $\wt{\rho}$.
\end{proof}

\begin{proof}[Proof of \cref{prop: geometric fixed points of HZ for non p group}]
    Now suppose that $G$ is any finite group with at least two distinct prime factors $s$ and $t$. So there exists some subgroups $C_s,C_t\subset G$. Let $\wt{\rho}$ denote the reduced regular representation of $G$ and let $\lambda_s$ and $\lambda_t$ denote the $2$-dimensional representations of $C_s$ and $C_t$, respectively, which rotate the plane by $\frac{2\pi}{s}$ and $\frac{2\pi}{t}$ radians.  Note that $\lambda_s$ and $\lambda_t$ have trivial fixed points and thus the induced representations $\Lambda_s=\mathrm{Ind}_{C_s}^G(\lambda_s)$ and $\Lambda_t=\mathrm{Ind}_{C_t}^G(\lambda_t)$ have trivial $G$-fixed points.  It follows that for a sufficiently large integer $n$ there is an equivariant embedding of $\Lambda_{s}$ and $\Lambda_t$ into $n\wt{\rho}$.  

    For any two $G$-representations $V$ and $W$ we have $\alpha_{V\oplus W}= \alpha_{V}\alpha_W$ so we must have that $\alpha_{\Lambda_s}$ and $\alpha_{\Lambda_t}$ divide $\alpha_{n\wt{\rho}}$.  Thus if we invert $\alpha_{\wt{\rho}}$ we also invert $\alpha_{\Lambda_{s}}$ and $\alpha_{\Lambda_{t}}$.  We claim that, as elements in the ring $\HZ_{\star}$, $\alpha_{\Lambda_{s}}$ is $s^{|G/C_s|}$-torsion and $\alpha_{\Lambda_{q}}$ is $t^{|G/C_t|}$-torsion.  It follows that inverting both kills everything and thus $\Phi^G(\HZ)\simeq *$.

    To prove the claim, note that $\alpha_{\lambda_s}$ and $\alpha_{\lambda_t}$ are $s$ and $t$-torsion, in $\HZ_{\star}$.  This follows, for example, from \cref{theorem: Ferland--Lewis odd} and \cref{theorem: Ferland--Lewis} below which imply that $\HZ^{C_s}_{-\lambda_s}\cong \mathbb{Z}/s$.  Since $\HZ$ is a $G$-$E_{\infty}$ ring spectrum we can take norms and we have $\alpha_{\Lambda_s} = N^{G}_{C_s}(\alpha_{\lambda_s})$. The claim now follows from the multiplicative property of the norms.
\end{proof}

\section{Multiplicative structure}\label{section: multiplicative structure}

In this section we describe multiplicative structure in the $RO(G)$-graded equivariant $K$-theory of finite fields.  We begin by describing the precise algebraic object which encodes these multiplications in \cref{subsec: Einfty Green functor}. In \cref{subsec: products} prove qualitative statements about the multiplication in $\ul\pi_\star K_G(k)$.

\subsection{$\bbE_\infty$-Green functor structure} \label{subsec: Einfty Green functor}

Whereas Mackey functors are the analogues of abelian groups in $G$-equivariant algebra, \emph{Green functors} are the analogues of rings.

\begin{definition}[{\cite[Chapter 2]{Bouc:GreenFunctors}}]\label{definition:Green}
	A \emph{Green functor} for an abelian group $G$ is a $G$-Mackey functor $\ul S$ such that:
	\begin{enumerate}
		\item $\ul S(G/H)$ is a ring for all $H\leq G$, 
		\item the restriction maps are ring homomorphisms,
		\item the Weyl group actions are actions through ring automorphisms, and
		\item (Frobenius reciprocity) whenever it makes sense, we have the relations 
        \[
            T^K_H(x)\cdot y = T^K_H(x\cdot R^K_H(y)), \qquad x\cdot T^K_H(y) = T^K_H(R^K_H(x)\cdot y).
        \]
	\end{enumerate}
\end{definition}

The category $\Mack_G$ of $G$-Mackey functors has a symmetric monoidal product called the \emph{box product}. We denote the box product of two Mackey functors $\ul M$ and $\ul N$ by $\ul M\boxtimes \ul N$. A Green functor is precisely a monoid with respect to the box product \cite[\S 2.3]{Bouc:GreenFunctors}.

Just as the zeroth homotopy group $\pi_0$ of a commutative ring spectrum is a commutative ring, the zeroth homotopy Mackey functor $\ul\pi_0$ of a commutative ring $G$-spectrum is a commutative Green functor. The collection of all homotopy Mackey functors assembles into an \emph{$RO(G)$-graded Green functor}.  Explicitly, this structure consists of a collection of Mackey functors $\ul M_{\star}$, graded on $\star\in RO(G)$, together with maps of Mackey functors
\[
	\ul M_{V}\boxtimes \ul M_W\to \ul M_{V+W}
\]
subject to certain associativity and unitality assumptions. We note also that the properties of the box product imply a graded version of Frobenius reciprocity (part (d) of \cref{definition:Green}) which is completely analogous to the ungraded case.  A full account can be found in a paper of Lewis--Mandell \cite[Section 3]{LewisMandell:universalCoefficients}.   Note that Lewis--Mandell refer to graded Green functors as graded Mackey rings.  

There are several things one might mean by a commutative ring $G$-spectrum.  The possible commutative ring structures correspond to kinds of operads collectively known as \emph{$N_{\infty}$-operads}, as introduced by Blumberg--Hill \cite{BH:Ninfinity}.  On the zeroth homotopy Mackey functor $\ul\pi_0$, an algebra over an $N_{\infty}$-operad admits multiplicative structure called an \emph{incomplete Tambara functor} \cite{BH:incomplete}.
	
In this paper, we only work with algebras over the ``most incomplete'' $N_{\infty}$-operad. Following Barwick--Glasman--Shah \cite{BGS}, we refer to these as \emph{$\bbE_\infty$-Green functors}. The corresponding notion of incomplete Tambara functor has no multiplicative norms, and as such, is equivalent to the notion of a Green functor. Our reason for this choice is that we are not aware of a proof in the literature that equivariant $K$-theory spectra are algebras over any more structured $N_{\infty}$-operads. For completeness, we include a proof, due to Barwick--Glasman--Shah \cite{BGS}, that $K_G(R)$ has the claimed multiplicative structure.

\begin{proposition}[Barwick--Glasman--Shah]
	Let $R$ be a commutative $G$-ring for a finite group $G$. Then $K_G(R)$ admits the structure of a $\bbE_\infty$-Green functor.
\end{proposition}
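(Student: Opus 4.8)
The plan is to show that $K_G(R)$ arises as a functor out of a symmetric monoidal $\infty$-category, which by the work of Barwick--Glasman--Shah automatically produces the $\bbE_\infty$-Green functor (equivalently, the ``most incomplete'' incomplete Tambara functor) structure on its homotopy Mackey functors. First I would recall that the Barwick--Glasman--Shah framework constructs equivariant algebraic $K$-theory as a spectral Mackey functor, i.e.\ as a functor out of the effective Burnside $\infty$-category $A^{\mathrm{eff}}(\mathcal{O}_G)$ of the orbit category. The key structural input is that this $K$-theory functor is \emph{lax symmetric monoidal}: one feeds in a commutative algebra object---here the $G$-ring $R$, viewed as a commutative algebra in the appropriate symmetric monoidal category of $G$-objects---and extracts a commutative algebra in spectral Mackey functors.

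The main steps, in order, would be: (1) observe that since $R$ is a commutative $G$-ring, the assignment sending each orbit $G/H$ to the category of modules (or the appropriate Waldhausen/stable category) built from $R$ assembles into a symmetric monoidal coefficient system, with the symmetric monoidal structure coming from tensor product over $R$; (2) invoke the Barwick--Glasman--Shah machinery, which shows that applying $K$-theory levelwise and assembling via the effective Burnside category yields a spectral Mackey functor, and that this construction is lax symmetric monoidal with respect to the box product on spectral Mackey functors; (3) conclude that the commutative algebra structure on $R$ is sent to a commutative algebra structure on $K_G(R)$ in spectral Mackey functors, i.e.\ $K_G(R)$ is an $\bbE_\infty$-monoid for the box product; and (4) pass to homotopy Mackey functors, using that $\ul\pi_\star$ of an $\bbE_\infty$-monoid in spectral Mackey functors is an $RO(G)$-graded commutative Green functor, which is exactly the claimed structure. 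Crucially, this construction produces \emph{only} the most incomplete $N_\infty$-structure---no multiplicative norms are built in---which matches the paper's stated scope precisely.

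The main obstacle I expect is verifying the lax symmetric monoidality of the $K$-theory functor carefully enough to land in the correct level of structure, rather than accidentally claiming more (e.g.\ genuine Tambara/norm structure) or less. In the $\infty$-categorical setting this amounts to checking that $K$-theory, as a functor from symmetric monoidal stable $\infty$-categories (or Waldhausen categories) to spectra, is lax symmetric monoidal, and that this is compatible with the Mackey-functorial assembly so that the multiplication descends to the box product rather than some more rigid structure. Since this is already established in \cite{BGS}, the honest work is to cite the relevant statement and verify its hypotheses apply to the coefficient system determined by $R$. I would therefore keep the proof short: set up $R$ as a commutative algebra in the symmetric monoidal category of $G$-objects, appeal to the relevant result of Barwick--Glasman--Shah that equivariant $K$-theory is lax symmetric monoidal as a functor to spectral Mackey functors, and then translate the resulting $\bbE_\infty$-monoid structure into the language of $RO(G)$-graded Green functors via $\ul\pi_\star$.
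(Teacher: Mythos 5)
Your proposal is correct and follows essentially the same route as the paper: both take $R$ (via its perfect module category with $G$-action) as a commutative algebra input to the Barwick--Glasman--Shah $K$-theory-of-group-actions functor, invoke its lax (symmetric) monoidality into spectral Mackey functors, and conclude that $\bbE_\infty$-algebra structure is preserved. The only cosmetic difference is that the paper feeds in an object of $\mathrm{Fun}(BG,\mathrm{Cat}^{\mathrm{perf}}_{\infty})$ rather than a coefficient system over the full orbit category, which is exactly the input of \cite[Proposition 8.2]{BGS}.
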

\begin{proof}
	For the purposes of this proof we use the model for $K_G(R)$ given in \cite[Section 8]{BGS}, where it is called the ``$K$-theory of group actions.''  A proof that this model for equivariant algebraic $K$-theory of a $G$-ring is isomorphic to Merling's in the equivariant stable homotopy category is widely expected, and will appear in forthcoming work of the first-named author with Calle, Chedalavada, and Mejia \cite{Calle--Chan--Chedalavada--Mejia}.  
	
	Given a $G$-ring $R$, let $\Perf_R$ be a the $\infty$-category of perfect modules over the Eilenberg--MacLane spectrum $\mathrm{H}R$ in the $\infty$-category of spectra $\mathrm{Sp}$. This $\infty$-category inherits a $G$-action which, informally, is given as follows: if $M$ is a perfect $R$-module then $^{g}M$ is the $R$-modules whose action is given by the composite
	\[
		R\wedge M\xrightarrow{g\wedge 1} R\wedge M\to M,
	\]
	where the second map is the action of $R$ on $M$.
	
	If $R$ is an $\mathbb{E}_{\infty}$-ring, then $\Perf_R$ is an $\mathbb{E}_{\infty}$-algebra in the $\infty$-category $\mathrm{Fun}(BG,\mathrm{Cat}^{\mathrm{perf}}_{\infty})$ of perfect stable $\infty$-categories with a $G$-action. Barwick--Glasman--Shah show \cite[Proposition 8.2]{BGS}, there is a lax monoidal functor of $\infty$-categories
	\[
		K_G\colon \mathrm{Fun}(BG,\mathrm{Cat}^{\mathrm{perf}}_{\infty})\to \Mack(\mathrm{Sp})
	\]
	where the target is Barwick's $\infty$-category of spectral Mackey functors \cite{Barwick}.  The $K$-theory of group actions is exactly $K_G(\mathrm{Perf}_R)$.  Since lax monoidal functors preserve $\mathbb{E}_{\infty}$-algebras, the claim follows.
\end{proof}

\subsection{Products in equivariant $K$-theory} \label{subsec: products}

In this section, we describe how to compute multiplications in the $RO(G)$-graded equivariant $K$-theory of finite fields. 

To get a sense of the expected multiplicative behavior in $K_G(\bbF_{q^n})$, we first discuss the ring structure of the nonequivariant $K$-theory of finite fields. There, all products are essentially determined for degree reasons. Indeed, the $K$-theory in nonzero even degrees vanishes, so the product of any two classes in nonzero degrees vanishes. Therefore, the truncation map 
\[
    K(\bbF_q) \to H\bbZ
\]
exhibits the graded ring $K_*(\bbF_q)$ as a square-zero extension of $\pi_* H\bbZ$. This section will culminate in \cref{theorem: square zero extension}, which shows that this is true equivariantly: $\pi_\star^H K_G(k)$ is a square zero extension of $\HZ_\star^H$ for all $H\leq G$.

We keep the notation of \cref{sec: proof}, whereby $k$ is a degree $n$ Galois extension of $\bbF_q$ with Galois group $G\cong C_n$. We write $\tau_{\geq 1} K_G(k)$ for the fiber of the truncation map $\tau_{\leq 0}\colon K_G(k)\to \HZ$. For a subgroup $H\leq G$, the direct sum decomposition of $RO(G)$-graded rings
\[
	\pi_{\star}^H K_{C_n}(k)\cong \pi_{\star}^H \left( \tau_{\geq 1} K_G(k) \right)\oplus \HZ_{\star}^H
\]
of \cref{thm: K groups split} reduces the possible ways that classes can multiply together to the cases where each multiplicand is contained in exactly one of the direct summands. 

\begin{proposition} \label{prop: products}
	For a subgroup $H\leq G$, let $x\in \pi_{V}^HK_G(k)$ and $y\in \pi_{W}^HK_G(k)$ for virtual representations $V,W\in RO(G)$. The multiplication in $\pi^H_{\star}(K_G(k))$ satisfies the following.
    \begin{propenum}
        \item \label{prop: square zero part}
        If $x$ and $y$ are both elements in $\pi_{\star}^H \left( \tau_{\geq 1} K_G(k) \right)$ then $xy=0$.

        \item \label{prop: mixed products away from zero}
        If $x\in \pi_{V}^H \left( \tau_{\geq 1} K_G(k) \right)$ and $y\in \HZ^H_W$ with $|W|\neq 0$ then $xy=0$.

        \item \label{prop: products in HZ}
        If $x,y\in \HZ_{\star}$ then $xy\in \HZ_{\star}$ and the product is identified with the product in the ring structure of $\HZ_{\star}$.
    \end{propenum}
\end{proposition}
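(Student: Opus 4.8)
The plan is to reduce all three statements to two inputs from the earlier sections: first, that the truncation $\tau_{\leq 0}\colon K_G(k)\to\HZ$ is a map of $RO(G)$-graded Green functors, hence induces a ring homomorphism $\pi^H_\star K_G(k)\to\HZ^H_\star$ at every level whose kernel is exactly the fiber summand $\pi^H_\star(\tau_{\geq 1}K_G(k))$ of \cref{thm: K groups split}; and second, that by \cref{prop: homotopy of the fiber} the Mackey functor $\ul\pi_V(\tau_{\geq 1}K_G(k))$ vanishes unless $|V|=2i-1>0$, in which case it is $\varominus^i$ or $\varoplus^i$. The crucial feature of the latter is that $\varominus^i$ and $\varoplus^i$ are fixed-point Mackey functors in the image of $\ul R$, so their restriction to the underlying level $G/e$ is injective. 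I will also use repeatedly that $\pi^e_U\HZ=\pi_{|U|}H\bbZ$ vanishes for $|U|\neq 0$, and that the splitting of \cref{thm: K groups split} is a splitting of Mackey functors, so restrictions respect it.

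For (a), I would argue by degree. If either class vanishes there is nothing to prove, so assume $x,y\neq 0$; then $|V|$ and $|W|$ are both odd and positive, so $|V+W|$ is even and positive. Since $x,y\in\ker\tau_{\leq 0}$ we have $\tau_{\leq 0}(xy)=0$, placing $xy$ in $\ul\pi_{V+W}(\tau_{\geq 1}K_G(k))(G/H)$, which is zero because $|V+W|$ is even. Hence $xy=0$.

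For (b) and the nontrivial case of (c) I would use detection on the underlying level. In both situations the relevant product lies in $\ker\tau_{\leq 0}$ and therefore in $\ul\pi_{V+W}(\tau_{\geq 1}K_G(k))(G/H)$: in (b) because $\tau_{\leq 0}(x)=0$, and in (c) after splitting $xy=a+b$ into its fiber and $\HZ$ components and focusing on $a$. If $|V+W|$ is even this group vanishes and we are done immediately; otherwise it is $\varominus^{i}$ or $\varoplus^{i}$, and it suffices to show the class restricts to $0$ on $G/e$, since restriction is injective there. Because the splitting is restriction-compatible, a factor lying in the $\HZ$-summand in degree $U$ restricts into $\pi^e_U\HZ$, which is zero whenever $|U|\neq 0$. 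In (b) this applies to $y$ since $|W|\neq 0$; in (c) it applies to whichever of $x,y$ has nonzero virtual dimension, and at least one does because $|V+W|\neq 0$. In each case $\res^H_e(xy)=0$, forcing the class to vanish.

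This settles that $xy$ lies in the $\HZ$-summand in parts (b) (trivially, as $xy=0$) and (c). To finish (c) I would then apply the ring homomorphism $\tau_{\leq 0}$ to $xy$: since $\tau_{\leq 0}$ carries the $\HZ$-summand isomorphically onto $\HZ^H_\star$, the identity $\tau_{\leq 0}(xy)=\tau_{\leq 0}(x)\tau_{\leq 0}(y)$ identifies the product with the corresponding product in $\HZ^H_\star$. The main obstacle I anticipate is not any single computation but assembling the structural inputs correctly---namely verifying that $\tau_{\leq 0}$ is genuinely multiplicative as a map of Green functors and that the \cref{thm: K groups split} splitting is one of Mackey functors---so that underlying-level detection via injectivity of $\ul R(-)$ is available. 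Once these are in hand, every case collapses to the vanishing of $\pi^e_\star\HZ$ off degree zero together with parity bookkeeping.
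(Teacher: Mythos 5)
Your argument is correct, but for parts (a) and (b) it runs on a different engine than the paper's. The paper's mechanism for (a) and (b) is Frobenius reciprocity: since the transfers in $\varominus^i$ and $\varoplus^i$ are surjective, one writes $x = T^H_e(x')$ and gets $xy = T^H_e\bigl(x'\,R^H_e(y)\bigr)$, reducing everything to a product in the underlying nonequivariant ring $\pi_*K(k)$ (resp.\ to the vanishing of $\pi^e_W\HZ$ for $|W|\neq 0$). You instead use that $\tau_{\leq 0}$ is a ring map whose kernel is the fiber summand, so that $xy$ lands in $\pi^H_{V+W}(\tau_{\geq 1}K_G(k))$, and then kill it either by parity (in (a), since the fiber vanishes in even total degree by \cref{prop: homotopy of the fiber}) or by detection on $G/e$ using that $\varominus^i,\varoplus^i$ are fixed-point Mackey functors with injective restriction (in (b)). These are dual uses of the same special feature of the fiber homotopy --- the paper exploits surjectivity of transfers on the source side, you exploit injectivity of restrictions on the target side --- and both are legitimate; your version of (a) is arguably slicker since it needs no reference to the underlying ring $K_*(k)$ at all, though it leans a bit harder on the splitting of \cref{thm: K groups split} being one of Mackey functors and on the kernel identification, both of which do hold (the long exact sequence breaks into split short exact sequences in every degree). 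For part (c) your argument is essentially the paper's: both show the fiber component of $xy$ vanishes because it is detected at level $G/e$, where one of the two $\HZ$-factors already vanishes; the paper phrases this via the universal property of $\ul R$ applied to the composite $\HZ_V\boxtimes\HZ_W\to\ul\pi_{V+W}(\tau_{\geq 1}K_G(k))$, while you phrase it via injectivity of restriction, which amounts to the same thing. The two structural inputs you flag --- multiplicativity of $\tau_{\leq 0}$ and Mackey-functoriality of the splitting --- are exactly the ones the paper invokes (citing \cite[Prop.\ 4.35]{HHR} for the former), so no gap remains.
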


\begin{proof}[Proof of \cref{prop: square zero part}]
	Note that the homotopy Mackey functors in $\ul{\pi}_{\star}(\tau_{\geq 1} K_G(k))$, given by either $\varoplus^i$ or $\varominus^i$ satisfy the property that the transfer maps are surjective.  With this, we may write $x = T_e^H(x')$ for some element $x'\in \pi^e_{V}(K_{G}(k))\cong K_{|V|}(k)$ where $|V|$ is the total virtual dimension.  By Frobenius reciprocity we have
	\[
		xy = T^H_e(x')y = T^H_e(x'R^H_e(y))
	\]
	and we can compute the product $x'R^H_e(y)$ using the multiplicative structure for the underlying ring spectrum $K(k)$. All such products are zero for degree reasons.
\end{proof}

The same argument, together with the fact that $\HZ^e_{W}=0$ for $|W|\neq 0$ proves \cref{prop: mixed products away from zero}.

The case where $|W|\neq 0$ is handled similarly. Indeed, if $x\in \pi_V^H(\tau_{\geq 1} K_G(k))$ and $y\in \HZ^H_{W}$ with $|W|=0$ then again we have $xy = T^H_e(x'R^H_e(y))$, in which case we have reduced the computation to the case where $H=$. In this case, $\HZ^e_W\cong \mathbb{Z}$, $y$ is an integer, and $xy$ is given by the canonical action of the integers on $\pi_V^H(\tau_{\geq 1} K_G(k))$.

All that remains is the case where both $x$ and $y$ come from the $\HZ_{\star}$ components.

\begin{proof}[Proof of \cref{prop: products in HZ}]
	The second claim follows from the first because the projection map $\pi_{\star}^H (K_G(k))\to \HZ_{\star}^H$ is induced by the truncation map $K_G(k)\to \HZ$, which is a map of ring spectra \cite[Prop. 4.35]{HHR}.  Thus it suffices to check that the component of $xy$ in $\pi_{V+ W}^H(\tau_{\geq 1}K_G(k))$ is zero.  We will write $(xy)_{\geq 1}$ for this element.
	
	If $|V|=|W|=0$ then  $\pi_{V+W}(\tau_{\geq 1}K_G(k))=0$ by \cref{prop: homotopy of the fiber} so $(xy)_{\geq 1}=0$.  Thus it suffices to consider one of $|V|$ or $|W|$ is not zero.  Without loss of generality say $|V|\neq 0$.  In this case, the element $(xy)_{\geq 1}$ is in the image of the composite
	\begin{equation} \label{equation: multiplication for HZ HZ case}
		\HZ_{V}\boxtimes\HZ_{W} \to \ul\pi_{V+W}(K_G(k)) \to \ul\pi_{V+W}(\tau_{\geq 1}K_G(k))
	\end{equation}
	of the multiplication map followed by the projection. Note that $(\HZ_V\boxtimes \HZ_W)(G/e)\cong \HZ^e_{V}\otimes \HZ^e_W$, which is the zero group because $|V|\neq 0$.  But $\pi_{V+W}(\tau_{\geq 1}K_G(k))$ is either $\varoplus^i$, $\varominus^i$, or zero so in all cases there is a $G$-module $M$ so that $\pi_{V+W}(\tau_{\geq 1}K_G(k))\cong \ul R(M)$, where $\ul R$ is the right adjoint to the functor $\Mack_G\to \Mod_{\bbZ G}$ which evaluates at $G/e$.  In particular, since $(\HZ_{V}\boxtimes\HZ_{W})(G/e)=0$ the composite \eqref{equation: multiplication for HZ HZ case} is the zero map of Mackey functors so $(xy)_{\geq 1}=0$.
\end{proof}

By \cref{prop: products in HZ} we have that $\HZ_{\star}^{H}$ is a subalgebra of $\pi_{\star}^H(K_G(k))$.  We see that $\pi_{\star}^H(\tau_{\geq 1}K_G(k))$ is a submodule, and by \cref{prop: square zero part} we identify $\pi_{\star}^H K_G(k)$ is a square zero extension.

\begin{theorem}\label{theorem: square zero extension}
	For any $H\leq G$, $\pi_{\star}^H(K_G(k))$ is a square zero-extension of $\HZ_{\star}^{H}$.
\end{theorem}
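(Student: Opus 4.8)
The plan is to assemble \cref{theorem: square zero extension} directly from the three multiplicative statements in \cref{prop: products} together with the additive splitting already established in \cref{thm: K groups split}. The key observation is that ``square-zero extension'' is precisely the package of structural facts: there is a subalgebra, a complementary two-sided ideal on which all products vanish, and the ideal is an honest module over the subalgebra. Each of these three ingredients corresponds to exactly one part of \cref{prop: products}, so the proof is an organizational step rather than a computational one.

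First I would recall the direct sum decomposition of $RO(G)$-graded rings
\[
    \pi_{\star}^H K_G(k)\cong \pi_{\star}^H\left(\tau_{\geq 1} K_G(k)\right)\oplus \HZ_{\star}^H
\]
from \cref{thm: K groups split}, and fix notation for the two summands: write $I = \pi_{\star}^H(\tau_{\geq 1}K_G(k))$ for the fiber part and $A = \HZ_{\star}^H$ for the Eilenberg--MacLane part. The goal is to show this exhibits $\pi_{\star}^H(K_G(k))$ as the square-zero extension $A\ltimes I$. Next I would invoke \cref{prop: products in HZ} to conclude that $A$ is closed under multiplication and that the induced product agrees with the native ring structure on $\HZ_{\star}^H$; this identifies $A$ as a subalgebra and, via the projection induced by the ring map $\tau_{\leq 0}\colon K_G(k)\to \HZ$, as a retract of the full ring. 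I would then invoke \cref{prop: square zero part} to see that $I\cdot I = 0$, so $I$ is a square-zero ideal, and \cref{prop: mixed products away from zero} together with the final $|W|=0$ case of the proof of \cref{prop: products} to see that the mixed products $A\cdot I$ and $I\cdot A$ land back in $I$, making $I$ a two-sided ideal and an $A$-module. Assembling these, the multiplication on $A\oplus I$ is exactly that of a square-zero extension.

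The only genuine subtlety to flag is bookkeeping around the mixed products: \cref{prop: mixed products away from zero} only covers the case $|W|\neq 0$, and the $|W|=0$ case is dispatched separately in the discussion following that proposition, where the product is identified with the canonical $\bbZ$-action on $I$ coming from reduction to $H=e$ via Frobenius reciprocity. I would make sure to cite both halves so that the module structure of $I$ over $A$ is fully accounted for across all gradings. Beyond this, there is no real obstacle: all the analytic content lives in \cref{prop: products}, and \cref{theorem: square zero extension} is its clean packaging. I would therefore keep the proof short, stating that the three parts of \cref{prop: products} furnish respectively the square-zero ideal, the compatible module structure, and the subalgebra, which is exactly the data of a square-zero extension of $\HZ_{\star}^H$.
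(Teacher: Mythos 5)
Your proposal is correct and matches the paper's argument: the theorem is deduced directly from the splitting of \cref{thm: K groups split} together with the three parts of \cref{prop: products}, which respectively furnish the subalgebra $\HZ_\star^H$, the square-zero ideal $\pi_\star^H(\tau_{\geq 1}K_G(k))$, and the module structure. Your extra care with the $|W|=0$ mixed products is exactly the point the paper dispatches in the discussion between parts (b) and (c) of \cref{prop: products}.
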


The ring $\HZ^{G}_{\star}$ is quite complicated. Already for the group $G =C_2$, $\HZ_\star^{C_2}$ is infinitely generated and non-noetherian. A presentation of this ring can be found in work of Greenlees \cite[Corollary 2.6]{Greenlees:four-approaches} and Zeng \cite[Proposition 6.5]{Zeng}. In the next section we use these presentations to give a presentation of the $RO(C_2)$-graded ring $\pi_{\star}^{C_2}(K_{C_2}(\mathbb{F}_{q^2}))$.

\section{Extensions of prime degree} \label{sec: examples}

In this section we specialize to the case of $k=\bbF_{q^\ell}$ as a degree $\ell$ extension of $\bbF_q$ for $\ell$ a prime.  We recall the computation of $\HZ_{\star}$ and use this to give an explicit identification of the $RO(C_{\ell})$-graded $K$-groups.  The computation is slightly different when $\ell=2$ so we treat this case separately.

\subsection{Quadratic extensions}
    Our first example is the case of quadratic extensions $\bbF_{q^2}/\bbF_q$ where the Galois group is $C_2$.  \cref{table C2 Mackeys}  below gives Lewis diagrams for all the relevant $C_2$-Mackey functors.

\begin{table}[hbp]
    \setlength{\tabcolsep}{5mm} 
    \def\arraystretch{1.25} 
    \centering
    \begin{tabularx}{0.7\textwidth}{XXXXXXXXXX}
        & & & & & & & & \\[-18pt] \hline 
        \multicolumn{2}{|c|}{$\boxempty$} & 
        \multicolumn{2}{|c|}{$\olboxempty$} & 
        \multicolumn{2}{|c|}{$\circ$} & 
        \multicolumn{2}{|c|}{$\boxslash$} & 
        \multicolumn{2}{|c|}{$\olboxslash$}
        \\ \hline 

        \multicolumn{2}{|c|}{
            \begin{tikzcd} \bbZ \ar[d,shift right,"1"'] \\ \bbZ \ar[u,shift right, "2"']  \end{tikzcd}
        } & 
        \multicolumn{2}{|c|}{
            \begin{tikzcd} 0 \ar[d,shift right,"0"'] \\ \bbZ^{\sigma} \ar[u,shift right, "0"']  \end{tikzcd}
        } & 
        \multicolumn{2}{|c|}{
            \begin{tikzcd} \bbZ/2 \ar[d,shift right,"0"'] \\ 0 \ar[u,shift right, "0"']  \end{tikzcd}
        } &
        \multicolumn{2}{|c|}{
            \begin{tikzcd} \bbZ \ar[d,shift right,"2"'] \\ \bbZ \ar[u,shift right, "1"']  \end{tikzcd}
        } &
        \multicolumn{2}{|c|}{
            \begin{tikzcd} \bbZ/2 \ar[d,shift right,"0"'] \\ \bbZ^{\sigma} \ar[u,shift right, "1"'] \end{tikzcd}
        } \\ \hline\hline

        \multicolumn{5}{|c|}{$\varominus^i$} & \multicolumn{5}{|c|}{$\varoplus^i$} 
        \\ \hline

        \multicolumn{5}{|c|}{
            \begin{tikzcd} \bbZ/(q^i-1) \ar[d,shift right,"1+q^i"'] \\ \bbZ/(q^{2i}-1) \ar[u,shift right, "1"']  \end{tikzcd}
        } & 
        \multicolumn{5}{|c|}{
            \begin{tikzcd} \bbZ/(-q^i-1) \ar[d,shift right,"1-q^i"'] \\ \bbZ/(q^{2i}-1) \ar[u,shift right, "1"']  \end{tikzcd}
        } \\ \hline
    \end{tabularx}
    \caption{$C_2$-Mackey functors which appear in the $RO(C_2)$-graded homotopy of $K_{C_2}(\mathbb{F}_{q^2})$.}
    \label{table C2 Mackeys}
\end{table}

\subsubsection{The $RO(C_2)$-graded equivariant $K$-groups $\ul\pi_\star K_{C_2}(k)$}
    
We begin by recalling the $RO(C_2)$-graded homotopy groups of $\HZ$.  This computation is originally due to unpublished work of Stong; we use \cite{Ferland--Lewis} as a reference. We recall this computation following the \textit{motivic} grading convention. There are two irreducible real orthogonal $C_2$-representations: the 1-dimensional trivial representation 1 and the 1-dimensional sign representation $\sigma$. As a result, $\ul\pi_\star\HZ$ is a bigraded Mackey functor. We write $(x,y)$ for the bidegree corresponding to the representation $(x-y)+y\sigma$, whereby $x$ is the \textit{total degree} and $y$ is the \textit{twisted degree}.

\begin{theorem}[{\cite[Theorem 8.1]{Ferland--Lewis}}]\label{theorem: Ferland--Lewis}
	Let $(x,y)\in RO(C_2)$.  The $RO(C_2)$-graded homotopy groups of $\HZ$ are given by the following rules:
	\begin{enumerate}
		\item If $x=y$ then
		\[
		\ul{\pi}_{x,y}\HZ\cong \begin{cases}
			0 & x>0,\\
			\boxempty & x=0,\\
			\circ & x<0.
		\end{cases}
		\]
		\item If $y<0$ and $x=0$ then
		\[
		\ul{\pi}_{x,y}\HZ\cong \begin{cases}
			\boxempty & y\ \mathrm{ even},\\
			\olboxempty & y\ \mathrm{ odd},\\
		\end{cases}
		\]
		\item If $y>0$ and $x=0$ then
		\[
		\ul{\pi}_{x,y}\HZ\cong \begin{cases}
			\boxslash & y\ \mathrm{ even},\\
			\olboxslash & y\geq 3\  \mathrm{ odd},\\
			\olboxempty & y=1
		\end{cases}
		\]
		\item If $x>y$ and $x<0$ then
		\[
		\ul{\pi}_{x,y}\HZ\cong \begin{cases}
			\circ & x-y\ \mathrm{ even},\\
			0 & x-y\ \mathrm{ odd},\\
		\end{cases}
		\]
		\item If $y\geq x+3$ and $x>0$ then
		\[
		\ul{\pi}_{x,y}\HZ\cong \begin{cases}
			\circ & x-y\ \mathrm{ odd},\\
			0 & else
		\end{cases}
		\]
		\item If $x-y$ and $x$ are both positive or both negative then $\ul{\pi}_{x,y}\HZ=0$.
	\end{enumerate}
\end{theorem}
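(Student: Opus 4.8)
The plan is to compute $\ul\pi_\star\HZ$ from the Bredon chains of representation spheres and then repackage the output into the six stated cases. First I would reduce to a one-parameter problem: smashing with the trivial-representation sphere $S^1$ only shifts the total degree $x$ via the suspension isomorphism, so all of the content lives in the $\sigma$-direction. Concretely, for each fixed twisted degree $y$ the groups $\ul\pi_{x,y}\HZ$ are the reduced Bredon homology and cohomology Mackey functors of the sign sphere $S^{y\sigma}$ (Spanier--Whitehead dualized when $y<0$), so the whole statement is governed by the reduced $C_2$-CW structure on $S^{b\sigma}$, which has one fixed $0$-cell and one free cell in each dimension $1,\dots,b$.

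The computational engine is the resulting cellular cochain complex with $\ul\bbZ$-coefficients, a complex of trivial and free $\bbZ[C_2]$-modules whose differentials alternate between the norm $1+\gamma$ and the augmentation-type map $1-\gamma$. Equivalently, I would iterate the isotropy-separation cofiber sequence
\[ S^0 \xrightarrow{\ a_\sigma\ } S^\sigma \to (C_2)_+\wedge S^1, \]
smash it with $\HZ$ and with representation spheres, and run the long exact sequences of homotopy Mackey functors. Since $(C_2)_+\wedge S^1\wedge\HZ$ has homotopy concentrated in a single degree with value the induced (free) Mackey functor, each sequence expresses $\ul\pi_{V-\sigma}\HZ$ in terms of $\ul\pi_V\HZ$ and lets one descend the tower one $\sigma$ at a time. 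The homology of the alternating complex is exactly the kernel and cokernel of the norm map, so the answer is a repackaging of the $2$-periodic cohomology of $C_2$ with coefficients in $\bbZ$ and $\bbZ^\sigma$ recalled in \cref{subsec: Cn Mackey functors}: the even/odd alternation of $1\pm\gamma$ produces the $\bbZ/2$'s in the interior of each tower, while the ends produce the copies of $\bbZ$.

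To organize the bookkeeping I would introduce the standard generators — the Euler class $a_\sigma\in\ul\pi_{-\sigma}\HZ$ (the image of $S^0\to S^\sigma$, a $2$-torsion class restricting to zero) and an orientation class $u_\sigma\in\ul\pi_{1-\sigma}\HZ$ — and identify a polynomial-type ``positive cone'' generated multiplicatively by $a_\sigma$ and $u_\sigma$, together with a transfer-dual ``negative cone'' detected by the cyclic functor $\circ$ and the orbit functors $\boxslash,\olboxslash$; the lone $\boxempty$ at the origin joins the two. The crucial point is that the \emph{Mackey structure} — the restriction and transfer maps, not merely the underlying groups — is what distinguishes $\boxempty$ from $\olboxempty$ and $\boxslash$ from $\olboxslash$ according to the parity of $y$, since a horizontal bar records precisely when the Weyl action on the nonzero level is by $-1$, which is dictated by whether the relevant cells carry $\bbZ$ or $\bbZ^\sigma$ coefficients.

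The main obstacle will be this Mackey-theoretic bookkeeping rather than the underlying abelian groups: one must pin down the restriction and transfer maps at every spot, resolve the extension problems left by the long exact sequences (controlled, as elsewhere in the paper, by Ext-vanishing results such as \cref{prop: no extensions} and its dual), and treat the low-degree anomalies where the clean polynomial pattern degenerates — most notably the boundary value $y=1$ in case (c), which yields $\olboxempty$ rather than the generic $\boxslash$, and the parity gaps in cases (d)--(f) where entire groups vanish. Verifying these edge cases, and confirming that the norm map has the claimed kernel and cokernel at each level, is where the care is needed; once they are settled, the six-case statement follows by reading off the towers in the $(x,y)$ coordinates.
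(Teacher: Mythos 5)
The paper does not prove this theorem at all: it is recalled verbatim from Ferland--Lewis (recording unpublished work of Stong) and used as a black box, so there is no internal proof to compare against. Your proposed route --- computing $\ul\pi_{x,y}\HZ$ as the reduced Bredon homology/cohomology Mackey functors of the sign spheres $S^{y\sigma}$ via the standard $C_2$-CW structure (one fixed $0$-cell plus one free cell in each dimension $1,\dots,|y|$), with differentials alternating between $1+\gamma$ and $1-\gamma$, or equivalently by iterating the cofiber sequence $S^0\xrightarrow{a_\sigma}S^\sigma\to (C_2)_+\wedge S^1$ --- is the standard proof of this result and is sound as a strategy; the interior of each tower is indeed governed by the $2$-periodic (co)homology of $C_2$ with coefficients in $\bbZ$ and $\bbZ^\sigma$, and the Mackey-level bookkeeping you flag (restrictions, transfers, the $y=1$ anomaly, and the vanishing regions) is exactly where the remaining work lies. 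One small correction: for $\ul\bbZ$-coefficients there is no orientation class at the $C_2/C_2$ level in degree $1-\sigma$ (i.e.\ bidegree $(0,-1)$), since that spot is $\olboxempty$, whose top level is zero; the positive cone is generated by $a_\sigma=\alpha$ in bidegree $(-1,-1)$ and the class $u$ in bidegree $(0,-2)$ (consistent with the presentation $B=\bbZ[u,\alpha,2/u^m]/(2\alpha)$ quoted later in the paper), so your multiplicative organization should use $u$ rather than a putative $u_\sigma\in\ul\pi_{1-\sigma}\HZ$. With that adjustment, and granting that what you have written is an outline rather than a completed computation, the approach would correctly reproduce all six cases.
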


We depict this result graphically in \cref{figure: C2 HZ}, along with multiplicative structure we will discuss in the next section.

\begin{figure}[hbtp] 
    \centering
    
    \begin{sseqpage}[grid = none, 
        classes = {draw = none}, 
        title = $\ul\pi_{x,y}\HZ$,
        xrange = {-6}{6}, x tick step = 2,
        yrange = {-6}{6}, y tick step = 2]
        
        \class["\boxempty", name = 1, show name = {right = 0pt}](0,0)
        \class["\circ", name = \alpha, show name = {above = 2pt}](-1,-1)
        \structline
        \foreach \i in {2,3,4,5,6,7} {
            \class["\circ", name = {\alpha^\i}, show name = {above = 2pt}](-\i,-\i)
            \structline
        }

        \class["\boxempty", name = {u}, show name = {right = 0pt}](0,-2)
        \DoUntilOutOfBounds{
            \class["\circ"](\lastx-1,\lasty-1)
            \structline
        }

        \class["\boxempty", name = {u^2}, show name = {right = 0pt}](0,-4)
        \DoUntilOutOfBounds{
            \class["\circ"](\lastx-1,\lasty-1)
            \structline
        }

        \class["\boxempty", name = {u^3}, show name = {right = 0pt}](0,-6)
        \DoUntilOutOfBounds{
            \class["\circ"](\lastx-1,\lasty-1)
            \structline
        }

        \class["\olboxempty"](0,1)
        \DoUntilOutOfBounds{
            \class["\olboxempty"](\lastx,\lasty-2)
        }

        \class["\boxslash", name = {\frac{2}{u}}, show name = {left = 0pt}](0,2)
        \class["\boxslash", name = {\frac{2}{u^2}}, show name = {left = 0pt}](0,4)
        \class["\boxslash", name = {\frac{2}{u^3}}, show name = {left = 0pt}](0,6)

        \class["\olboxslash", name = {y_{1,1}}, show name = {left = 0pt}, red](0,3)
        \foreach \i in {2,3,4,5} {   
            \class["\circ", name = {y_{\i,1}}, show name = {below right = 0pt}, red](\i-1,\i+2)
            \structline
        }

        \class["\olboxslash", name = {y_{1,2}}, show name = {left = 0pt}, red](0,5)
        \DoUntilOutOfBounds{
            \class["\circ", red](\lastx+1,\lasty+1)
            \structline
        }
        
    \end{sseqpage}
    \caption{The $RO(C_2)$-graded homotopy Mackey functors of $\HZ$. Names for some generators at the $C_2/C_2$ level are shown. Lines indicate multiplication by $\alpha$. The square-zero summand (see \cref{proposition: HZ star is a square zero extension}) $M$ is shown in red.}
    \label{figure: C2 HZ}
\end{figure}

With the homotopy groups of $\HZ$ and $\tau_{\geq 1} K_{C_2}(k)$ in hand we can read off the $RO(C_2)$-graded homotopy groups of $K_{C_2}(k)$ using \cref{thm: K groups split}.

\begin{theorem}\label{thm: p equals 2 computation}
    The $RO(C_2)$-graded homotopy Mackey functors of $K_{C_2}(\bbF_{q^2})$ admit a direct sum decomposition
    \[
        \ul{\pi}_{x,y}K_{C_2}(\bbF_{q^2})\cong \ul{\pi}_{x,y} \left( \tau_{\geq 1} K_G(k) \right) \oplus \ul{\pi}_{x,y}\HZ.
    \]
    Explicitly, they are given by
    \[
        \ul{\pi}_{x,y}K_{C_2}(\bbF_{q^2}) = \begin{cases}
            0 & x=2i>0\ and\ x-y>-3\ or\ even\\
            \circ & x=2i>0\ and\ x-y<-3\ and\ odd\\
            \circ\oplus \varominus^i & x=2i-1>0\ and\ x-y\leq-3\ and\ odd\\
            \varominus^i & x=2i-1>0\ and\ x-y>-3\ and\ odd\\
            \varoplus^i & x=2i-1>0\ and\ x-y\ even\\
            \ul\pi_{x,y}\HZ & else
        \end{cases}
    \]
\end{theorem}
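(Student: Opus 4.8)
The direct sum decomposition asserted in the statement is exactly the content of \cref{thm: K groups split} specialized to $G = C_2$, so no further argument is needed for it; the plan is to identify each of the two summands explicitly in the motivic bigrading and then assemble them. Throughout, a degree $(x,y)$ corresponds to the virtual representation $(x-y)\cdot 1 + y\sigma$, so that $|V| = x$ is the total degree and $y$ records the number of sign summands.

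First I would pin down the fiber summand $\ul\pi_{x,y}(\tau_{\geq 1}K_G(k))$ using \cref{prop: homotopy of the fiber}. That proposition gives $\varominus^i$ on orientation-preserving representations and $\varoplus^i$ on orientation-reversing ones, in each case only when $|V| = 2i-1 > 0$, and $0$ otherwise. The one translation needed is the orientation dictionary: since $C_2$ acts on $S^\sigma$ by a reflection, the representation $(x-y) + y\sigma$ is orientation-preserving precisely when $y$ is even. Combined with $|V| = x$, this shows the fiber summand equals $\varominus^i$ when $x = 2i-1 > 0$ with $y$ even (equivalently $x - y$ odd), equals $\varoplus^i$ when $x = 2i-1 > 0$ with $y$ odd (equivalently $x - y$ even), and vanishes for every other $(x,y)$, in particular whenever $x$ is even or nonpositive.

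Next I would record the $\HZ$ summand from \cref{theorem: Ferland--Lewis}. For combining with a nonvanishing fiber only the values with $x > 0$ are relevant, and there the cited theorem (cases (1), (5), and (6), consistent with \cref{figure: C2 HZ}) yields $\ul\pi_{x,y}\HZ \cong \circ$ exactly when $x - y$ is odd and $x - y \leq -3$, and $\ul\pi_{x,y}\HZ = 0$ otherwise. For $x \leq 0$ I would simply retain the values of $\ul\pi_{x,y}\HZ$ from \cref{theorem: Ferland--Lewis} verbatim, since there the fiber contributes nothing.

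Finally I would assemble the summands case by case, organizing by the parity of $x$ and the parity and sign of $x - y$. For $x = 2i > 0$ the fiber vanishes and the answer is the $\HZ$ summand, namely $\circ$ on the odd part of the region $x - y \leq -3$ and $0$ elsewhere. For $x = 2i-1 > 0$ with $x - y$ odd (equivalently $y$ even) the fiber is $\varominus^i$, so adjoining $\HZ$ gives $\circ \oplus \varominus^i$ when $x - y \leq -3$ and $\varominus^i$ otherwise; for $x = 2i-1 > 0$ with $x - y$ even (equivalently $y$ odd) the fiber is $\varoplus^i$ while the $\HZ$ summand vanishes, giving $\varoplus^i$. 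All remaining degrees have $x \leq 0$, producing the unchanged group $\ul\pi_{x,y}\HZ$, which is the final ``else'' clause. The whole argument is bookkeeping: the only points requiring care are matching the orientation dichotomy of \cref{prop: homotopy of the fiber} to the parity of $y$ and tracking the parity of $x - y$ through the cases of \cref{theorem: Ferland--Lewis}; since \cref{thm: K groups split} already supplies a splitting of Mackey functors, no extension problems arise.
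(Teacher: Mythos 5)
Your proposal is correct and matches the paper's (essentially unwritten) proof: the paper obtains this theorem in exactly the same way, by specializing \cref{thm: K groups split} to $G=C_2$ and reading off the two summands from \cref{prop: homotopy of the fiber} and \cref{theorem: Ferland--Lewis}, with the orientation dichotomy translated into the parity of $y$. Your bookkeeping even handles the boundary case $x-y=-3$ with $x$ even correctly (giving $\circ$, consistent with \cref{figure: C2 HZ} and case (5) of \cref{theorem: Ferland--Lewis}), where the theorem's displayed case division appears to contain a typo ($x-y<-3$ where $x-y\leq -3$ is meant).
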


We depict this result graphically in \cref{figure: C2 K}. 

\SseqNewClassPattern{circominus}{
    (0,0);
    (-0.275,-0.075)(0.125,0);
}

\begin{figure}[hbtp] 
    \centering
    
    \begin{sseqpage}[grid = none, 
        classes = {draw = none}, 
        class pattern = circominus,
        title = $\ul\pi_{x,y}K_{C_2}(\bbF_{q^2})$,
        xrange = {-6}{6}, x tick step = 2,
        yrange = {-6}{6}, y tick step = 2]
        
        \class["\boxempty"](0,0)
        \class["\circ"](-1,-1)
        \foreach \i in {2,3,4,5,6,7} {
            \class["\circ"](-\i,-\i)
        }

        \class["\boxempty"](0,-2)
        \DoUntilOutOfBounds{
            \class["\circ"](\lastx-1,\lasty-1)
        }

        \class["\boxempty"](0,-4)
        \DoUntilOutOfBounds{
            \class["\circ"](\lastx-1,\lasty-1)
        }

        \class["\boxempty"](0,-6)
        \DoUntilOutOfBounds{
            \class["\circ"](\lastx-1,\lasty-1)
        }

        \class["\olboxempty"](0,1)
        \DoUntilOutOfBounds{
            \class["\olboxempty"](\lastx,\lasty-2)
        }

        \class["\boxslash"](0,2)
        \class["\boxslash"](0,4)
        \class["\boxslash"](0,6)

        \class["\olboxslash", red](0,3)
        \foreach \i in {2,3,4,5} {   
            \class["\circ", red](\i-1,\i+2)
        }

        \class["\olboxslash", red](0,5)
        \DoUntilOutOfBounds{
            \class["\circ", red](\lastx+1,\lasty+1)
        }

        \foreach \i in {1,2,3,4} {
            \class["\varominus^{\i}", blue](2*\i-1,0)
            \DoUntilOutOfBounds{
                \class["\varominus^{\i}",blue](\lastx,\lasty+2)
            }
            \DoUntilOutOfBounds{
                \class["\varominus^{\i}",blue](\lastx,\lasty-2)
            }

            \class["\varoplus^{\i}", blue](2*\i-1,1)
            \DoUntilOutOfBounds{
                \class["\varoplus^{\i}",blue](\lastx,\lasty+2)
            }
            \DoUntilOutOfBounds{
                \class["\varoplus^{\i}",blue](\lastx,\lasty-2)
            }
        }
    \end{sseqpage}
    \caption{The $RO(C_2)$-graded homotopy Mackey functors of $K_{C_2}(\bbF_{q^2})$. The contribution from the square-zero summand $M$ of $\HZ_\star$ (see \cref{proposition: HZ star is a square zero extension}) is red, and the summand $\ul\pi_\star \left( \tau_{\geq 1} K_G(k) \right)$ is blue. Bidegrees (e.g., (1,4)) in which two Mackey functors appear side-by-side represent the sum of the two Mackey functors.}
    \label{figure: C2 K}
\end{figure}

\subsubsection{$RO(C_2)$-graded ring $\pi_\star^{C_2} K_{C_2}(k)$}

Finally, we can describe the graded ring $\pi_{\star}^{C_2} K_{C_2}(k)$.  First, we recall the ring structure of $\HZ^{C_2}_{\star}$ which is described in \cite[Section 2]{Greenlees:four-approaches} and \cite[Proposition 6.5]{Zeng}.  We begin with an $RO(C_2)$-graded ring 
\begin{gather*}
	B = \bbZ\left[u,\alpha,\frac{2}{u^m}\right]/(2\alpha)\\
	|u| = (0,-2) \quad |\alpha| = (-1,-1)
\end{gather*}
where $m$ runs over all positive integers. Note that we are interpretting $\frac{2}{u^m}$ as purely formal; the element $\frac{1}{u^m}$ does not exist.  We are using the motivic grading so that $(a,b)$ corresponds to the virtual representation $(a-b)+b\sigma$. Let $M$ be the $B$-module
\[
	M = \bbF_2[y_{j,k}]_{j,k>0}=\bbF_2[\Sigma^{-1}\alpha^{-j}u^{-k}]_{j,k>0} 
\]
where $|y_{j,k}|=|\Sigma^{-1}\alpha^{-j}u^{-k}| = (j-1,j+2k)$ and the multiplication the element $u$ and $\alpha$ in $B$ works as indicated by the exponents, with the understanding that if either $j$ or $k$ becomes non-negative then the element is zero.  Because $M$ is 2-torsion, multiplication by all elements of the form $\frac{2}{u^i}$ are zero. 

\begin{proposition}\label{proposition: HZ star is a square zero extension}
	The ring $\HZ^{C_2}$ is isomorphic to the square zero extension $B\oplus M$.
\end{proposition}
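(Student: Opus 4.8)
The plan is to read the additive structure of $\HZ^{C_2}_\star$ at the $C_2/C_2$ level directly off \cref{theorem: Ferland--Lewis}, exhibit $B$ and $M$ as its two halves, and then check the three multiplication rules $B\cdot B\subseteq B$, $B\cdot M\subseteq M$, and $M\cdot M=0$. The organizing observation is that the parity of $y-x$ separates the summands: every spanning class of $B$, namely $u^a\alpha^b$ (of bidegree $(-b,-2a-b)$) and the transfers $\tfrac{2}{u^m}$ (of bidegree $(0,2m)$), has $y-x$ even, whereas each generator $y_{j,k}$ of $M$ has $y-x=2k+1$ odd. Thus $B$ and $M$ occupy disjoint bidegrees, the additive splitting $\HZ^{C_2}_\star\cong B\oplus M$ is forced once the generators are matched to groups, and the parity bookkeeping gives $B\cdot B\subseteq B$ (even $+$ even), $B\cdot M\subseteq M$ (even $+$ odd), and $M\cdot M\subseteq B$ (odd $+$ odd) for free.

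First I would match generators to groups at the fixed-point level. By \cref{table C2 Mackeys}, the relevant Mackey functors contribute $\boxempty,\boxslash\mapsto\bbZ$ and $\circ,\olboxslash\mapsto\bbZ/2$ at $C_2/C_2$, while $\olboxempty\mapsto 0$. Comparing with \cref{theorem: Ferland--Lewis}, the classes $u^a\alpha^b$ and $\tfrac{2}{u^m}$ realize exactly the even-parity groups, and the $y_{j,k}$ realize the odd-parity ones (the $\olboxslash$ on the line $x=0$ with $y\ge 3$ odd, and the $\circ$ in the region $x>0$, $y\ge x+3$, $x-y$ odd). That $B$ is a subring with the stated presentation---$u$ the orientation class, $\alpha=a_\sigma$ the Euler class with $2\alpha=0$, and the transfer classes $\tfrac{2}{u^m}$---and that $M$ is the complementary $B$-module on which the $\tfrac{2}{u^m}$ act trivially (since $M$ is an $\bbF_2$-module) is the content of the presentations in \cite[Section 2]{Greenlees:four-approaches} and \cite[Proposition 6.5]{Zeng}, which I would cite rather than rederive.

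The one genuinely new multiplicative claim is that the ideal $M$ is \emph{square-zero}, i.e.\ $M\cdot M=0$ and not merely $M\cdot M\subseteq B$. Here I would argue by degree and torsion. A product $y_{j,k}\,y_{j',k'}$ lands in bidegree $(x,y)=(j+j'-2,\,j+j'+2k+2k')$, so that $y-x=2(k+k'+1)\ge 4$ is even and $x=j+j'-2\ge 0$. If $x>0$, this bidegree satisfies $y\ge x+3$ with $x-y$ even, so \cref{theorem: Ferland--Lewis} gives the zero group; if $x=0$, forcing $j=j'=1$, the bidegree has $x=0$ and $y>0$ even, where \cref{theorem: Ferland--Lewis} gives $\boxslash$, whose $C_2/C_2$-value is the torsion-free group $\bbZ$. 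Since every element of $M$ is $2$-torsion, the product of two of them lands in the $2$-torsion subgroup of a torsion-free group and hence vanishes. Therefore $M\cdot M=0$, and $\HZ^{C_2}_\star\cong B\oplus M$ is the asserted square-zero extension.

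I expect the last step to be the main obstacle: everything else is bookkeeping against \cref{theorem: Ferland--Lewis} or a citation to the known ring and module structures, but the square-zero vanishing is not a pure dimension count, since products of the $y_{1,k}$ can land in nonzero (albeit torsion-free) groups, and it genuinely relies on combining that torsion-freeness with the $\bbF_2$-linearity of $M$. A secondary point requiring care is confirming that the additive matching is compatible with products---that the positive cone is closed under multiplication and realizes the relation $2\alpha=0$ and the divided-class relations among the $\tfrac{2}{u^m}$---which I would settle by appeal to the cited presentations.
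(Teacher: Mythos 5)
Your proof is correct, and it actually supplies an argument where the paper gives essentially none: in the text, \cref{proposition: HZ star is a square zero extension} is presented as a recollection of the known ring structure of $\HZ^{C_2}_\star$, with the presentation of $B$ and the $B$-module structure of $M$ cited from \cite[Section 2]{Greenlees:four-approaches} and \cite[Proposition 6.5]{Zeng} and no written verification of the square-zero property. What you add is precisely the piece that is not a literal restatement of those citations. Your organizing observation---that the parity of $y-x$ separates the bidegrees supporting $B$ (even) from those supporting $M$ (odd)---correctly reduces the multiplicative content to showing $M\cdot M=0$ rather than merely $M\cdot M\subseteq B$, and your degree-plus-torsion argument for that is valid: a product $y_{j,k}y_{j',k'}$ lands in a bidegree with $y-x=2(k+k'+1)\geq 6$ (your ``$\geq 4$'' is a harmless underestimate since $k,k'\geq 1$), so for $j+j'>2$ the target group vanishes by \cref{theorem: Ferland--Lewis}, while for $j=j'=1$ it is $\boxslash(C_2/C_2)\cong\bbZ$, which is torsion-free and therefore kills the $2$-torsion product. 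Two small caveats: first, the additive exhaustion implicitly uses that $\HZ^{C_2}_{x,y}=0$ in the band $x>0$, $0<y-x<3$, which the paper's statement of \cref{theorem: Ferland--Lewis} does not explicitly cover (those groups do vanish, as in \cref{figure: C2 HZ}, and your $M\cdot M=0$ argument never lands there); second, as you note yourself, the identification of the $B$-action on $M$ with the one encoded by the exponents still rests on the cited presentations rather than on anything you verify. Neither point is a gap in what you set out to prove.
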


We can now describe the $RO(C_2)$-graded ring $\pi_{\star}^{C_2}(K_{C_2}(\bbF_{q^2}))$. We write 
\[ 
    N = \pi^{C_2}_{\star}(\tau_{\geq 1}K_{C_2}(k))
\]
for the elements which do not come from $\HZ^{C_2}_{\star}$.  The elements of $N$ form a module over $\HZ_{\star}^{C_2}\subset \pi_{\star}^{C_2}(K_{C_2}(\bbF_{q^2}))$, and by \cref{prop: mixed products away from zero,prop: square zero part}, the ring $\pi_{\star}^{C_2}(K_{C_2}(k))$ is in fact a square zero extension $B\oplus (M\oplus N)$ of $B$. Thus we are done as soon as we describe the action of $B$ on $N$.  

By \cref{prop: homotopy of the fiber} we have an $RO(C_2)$-graded decomposition
\[
	N\cong \bigoplus\limits_{(a,b)\in \bbZ^2}N_{a,b}
\]
where 
\[
	N_{a,b}\cong \begin{cases}
		\bbZ/(q^i-1) & \textrm{$a=2i-1$ for $i>0$ and $b$ even}\\
		\bbZ/(q^i+1) & \textrm{$a=2i-1$ for $i>0$ and $b$ odd}\\
		0 & \textrm{else}.
	\end{cases}
\]
For any $i>0$ let us write $x_{i,b}$ for chosen generators of $N_{2i-1,b}$.  Note that we are free to choose this element to the be transfer of the generator $1\in \pi_{2i-1,b}^e K_{C_2}(\bbF_{q^2})\cong \mathbb{Z}/(q^{2i}-1)$.  The next lemma completely describes the module structure of $N$ over $B$.

\begin{lemma}
	For any $i>0$ and $b\in \bbZ$ we have 
	\[
		ux_{i,b} = x_{i,b-2},\quad \frac{2}{u^m}x_{i,b} = 2x_{i,b+2m},\quad \textrm{and} \quad \alpha x_{i,b}=0.
	\]
\end{lemma}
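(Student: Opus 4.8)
The plan is to reduce every product to a computation on the underlying level by means of Frobenius reciprocity, exactly as in the proof of \cref{prop: square zero part}. The starting point is that each generator is a transfer, $x_{i,b} = T^{C_2}_e(1_b)$, where $1_b$ denotes a generator of the underlying group $\pi^e_{2i-1,b}K_{C_2}(k) \cong \bbZ/(q^{2i}-1)$. For any class $z \in \HZ^{C_2}_\star \subseteq \ul\pi_\star K_{C_2}(k)$, Frobenius reciprocity in the $\bbE_\infty$-Green functor $\ul\pi_\star K_{C_2}(k)$ gives
\[
    z \cdot x_{i,b} = z \cdot T^{C_2}_e(1_b) = T^{C_2}_e\!\left(R^{C_2}_e(z)\cdot 1_b\right),
\]
so the entire lemma comes down to identifying the restrictions $R^{C_2}_e(u)$, $R^{C_2}_e(\alpha)$, and $R^{C_2}_e(2/u^m)$ and then computing a product and a transfer on the underlying level.

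First I would record that $u$, $\alpha$, and $2/u^m$ all live in bidegrees of nonpositive total degree, namely $(0,-2)$, $(-1,-1)$, and $(0,2m)$, where \cref{prop: homotopy of the fiber} gives $\ul\pi_\star(\tau_{\geq 1}K_G(k)) = 0$; by \cref{thm: K groups split} the truncation $K_{C_2}(k) \to \HZ$ is then an isomorphism on $\ul\pi$ in these degrees, so each of these classes has a unique lift to $\ul\pi_\star K_{C_2}(k)$ and I may compute with these lifts. Since restriction commutes with the truncation map and the underlying group $\pi^e_{x,y}K_{C_2}(k)$ is $K_x(k)$, the three restrictions can be read off from \cref{table C2 Mackeys} together with Quillen's connectivity of $K(k)$: we have $R^{C_2}_e(\alpha) \in \pi^e_{-1,-1} = K_{-1}(k) = 0$; the class $R^{C_2}_e(u)$ is a generator of $\pi^e_{0,-2} \cong \bbZ$, hence a unit $u_e$, because $u$ generates $\boxempty(C_2/C_2)$ and the restriction of $\boxempty = \ul\bbZ$ is the identity; and $R^{C_2}_e(2/u^m)$ is twice a generator of $\pi^e_{0,2m} \cong \bbZ$, namely $2u_e^{-m}$, because $2/u^m$ generates $\boxslash(C_2/C_2)$ and the restriction of $\boxslash = \ul L(\bbZ)$ is multiplication by $2$. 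The factor of $2$ in the second relation is exactly this restriction.

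With these in hand, the three relations fall out of the displayed Frobenius formula. For $\alpha$ the restriction vanishes, so $\alpha x_{i,b} = T^{C_2}_e(0) = 0$. For $u$, multiplication by the unit $u_e$ is an isomorphism $\pi^e_{2i-1,b} \to \pi^e_{2i-1,b-2}$ shifting the twisted degree by $-2$; choosing the generators $1_b$ compatibly so that $u_e \cdot 1_b = 1_{b-2}$, I get $u x_{i,b} = T^{C_2}_e(1_{b-2}) = x_{i,b-2}$. For $2/u^m$, the same compatible generators give $u_e^{-m}\cdot 1_b = 1_{b+2m}$, so the underlying product is $R^{C_2}_e(2/u^m)\cdot 1_b = 2u_e^{-m}\cdot 1_b = 2\cdot 1_{b+2m}$, and applying the additive transfer yields $\frac{2}{u^m}\, x_{i,b} = 2\,T^{C_2}_e(1_{b+2m}) = 2x_{i,b+2m}$.

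The one point requiring care — and the main obstacle to a fully rigorous argument — is the compatible choice of the underlying generators $1_b$ across the different twisted degrees $b$, which is what promotes the first relation from an equality ``up to a unit'' to an equality on the nose. I would fix this at the outset by using the unit $u_e$ to transport a single chosen generator of $K_{2i-1}(k)$ through all twisted degrees, so that $u_e\cdot 1_b = 1_{b-2}$ holds by definition. As a consistency check one can bypass the bookkeeping for the second relation altogether: the defining relation $u^m\cdot \frac{2}{u^m} = 2$ of $B$ persists in $\ul\pi_\star K_{C_2}(k)$ since $\HZ^{C_2}_\star$ is a subring, and combined with the first relation (which makes $u^m$ act as an isomorphism between the relevant cyclic groups) it forces $\frac{2}{u^m}x_{i,b} = 2x_{i,b+2m}$ independently of the choices.
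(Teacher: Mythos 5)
Your proof is correct and follows essentially the same route as the paper: write $x_{i,b}$ as a transfer of the underlying generator and apply Frobenius reciprocity, reading off the restrictions of $u$, $\alpha$, and $\frac{2}{u^m}$ from the relevant Lewis diagrams. The only cosmetic differences are that the paper handles $\alpha x_{i,b}=0$ by citing \cref{prop: mixed products away from zero} (whose proof is exactly your vanishing-restriction argument) and derives the $\frac{2}{u^m}$ relation by multiplying the $u$-relation by $2u^{m-1}$ rather than restricting $\frac{2}{u^m}$ directly; your explicit attention to choosing the generators $1_b$ compatibly addresses a point the paper leaves implicit.
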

\begin{proof}
	The identity $\alpha x_{i,b}=0$ follows from \cref{prop: mixed products away from zero} because the total dimension of $\alpha$ is not zero.  
	
	To show that 	$ux_{i,b} = x_{i,b-2}$, we note that $u$ is the generator of the $C_2/C_2$ level of $\HZ^{C_2}_{(0,-2)}\cong \boxempty$.  In particular, the restriction of $u$ is the element $1\in \boxempty(C_2/e)=\bbZ$.  Now we have
	\[
		ux_{i,b} = uT^{C_2}_e(1) = T^{C_2}_e(R^{C_2}_e(u)1) = T^{C_e}_e(1) = x_{i,b-2}.
	\]
	Multiplying by $2u^{m-1}$ gives $2u^mx_{i,b} = 2x_{i,b-2m}$, which we rearrange to $\frac{2}{u^m}x_{i,b-2} = 2x_{i,b}$.
\end{proof}

From this lemma, we see that the generators depend on the characteristic of $\bbF_{q^2}$. Note that $x_{i,b}$ and $x_{i,b+1}$ will generate $x_{i,b'}$ for any $b'<b$. If $q=2^r$ for some $r$ then $2$ is invertible in $\bbZ/(q^i-1)$ and $\bbZ/(q^i+1)$ and $2x_{i,b}$ is a generator of $N_{2i-1,b}$. On the other hand, if $q = p^r$ for $p>2$ then $2$ is not invertible in $\bbZ/(q^i-1)$ nor $\bbZ/(q^i+1)$ and we have $2x_{i,b}$ is not a generator of $N_{2i-1,b}$.  

\begin{corollary}
	If $q=2^r$ then $N$ is generated by $\{x_{i,0},x_{i,1}\}$ for $i>0$. If $q=p^r$ for $p>2$ then $N$ is generated by $\{x_{i,b}\}$ for $i>0$ and $b\in \mathbb{Z}$.
\end{corollary}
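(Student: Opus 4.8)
The plan is to read off both generation statements directly from the module structure established in the preceding lemma, organizing the argument around the two ways the $B$-action moves between the degrees $(2i-1,b)$. First I would record the two relevant operations: multiplication by $u$ sends $x_{i,b}$ to $x_{i,b-2}$, hence restricts to an isomorphism $N_{2i-1,b}\to N_{2i-1,b-2}$ carrying a chosen generator to a chosen generator, while multiplication by $\tfrac{2}{u^m}$ sends $x_{i,b}$ to $2x_{i,b+2m}$. The element $\alpha$ plays no role here, since $\alpha x_{i,b}=0$. Consequently, starting from any single $x_{i,b_0}$ the $B$-span contains the full group $N_{2i-1,b}$ for every $b\le b_0$ of the same parity as $b_0$ (obtained by iterating $u$), but in the degrees $b>b_0$ of that parity it contains only the subgroup generated by $2x_{i,b}$. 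Since $u$ and $\tfrac{2}{u^m}$ both preserve the parity of $b$, at least one generator of each parity is needed for each $i$, and the entire question of how far the span reaches in the increasing-$b$ direction reduces to whether $2$ is invertible in the cyclic groups $\bbZ/(q^i-1)$ and $\bbZ/(q^i+1)$.

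Next I would split on the characteristic. When $q=2^r$, the integers $q^i-1$ and $q^i+1$ are odd, so $2$ is invertible in each $N_{2i-1,b}$; thus $2x_{i,b}$ is again a generator, and the relation $\tfrac{2}{u^m}x_{i,b-2m}=2x_{i,b}$ shows that $x_{i,b}$ lies in the $B$-submodule generated by $x_{i,b-2m}$. Combining this upward reach with the downward reach supplied by $u$, the two elements $x_{i,0}$ and $x_{i,1}$ already generate the whole $i$-th column, and ranging over $i>0$ gives that $\{x_{i,0},x_{i,1}\}_{i>0}$ generates $N$. When $q=p^r$ with $p>2$, by contrast, $q^i-1$ and $q^i+1$ are even, so $2$ is a zero divisor and $2x_{i,b}$ generates only a proper subgroup of $N_{2i-1,b}$; hence no $x_{i,b}$ with $b$ above a fixed bound can be recovered from the lower-degree generators, so the full collection $\{x_{i,b}\}_{i>0,\,b\in\bbZ}$ is required. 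That this collection generates $N$ is in any case immediate, since each component $N_{2i-1,b}$ is the cyclic group on $x_{i,b}$.

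The argument involves no serious obstacle; the only point requiring care is the asymmetry between the two operations -- that $u$ realizes isomorphisms between adjacent degrees while $\tfrac{2}{u^m}$ realizes only multiplication by $2$ -- together with the parity bookkeeping that forces a separate generator for each residue of $b$ modulo $2$. The dichotomy between the two cases of the corollary is then exactly the dichotomy of whether $2$ is invertible in $\bbZ/(q^i\pm1)$, which is governed solely by the characteristic of $\bbF_{q^2}$.
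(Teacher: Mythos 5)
Your argument is correct and is essentially the same as the paper's, which derives the corollary directly from the module relations $ux_{i,b}=x_{i,b-2}$ and $\tfrac{2}{u^m}x_{i,b}=2x_{i,b+2m}$ together with the observation that $2$ is invertible in $\bbZ/(q^i\pm 1)$ exactly when $q$ is even. Your additional remarks on parity bookkeeping and on why the odd-characteristic case genuinely requires all the $x_{i,b}$ just make explicit what the paper leaves implicit.
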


We gather together the above observations in the following theorem.

\begin{theorem} \label{theorem: ring structure for C2}
	The ring $\pi_{\star}^{C_2}(K_{C_2}(\mathbb{F}_{q^2}))$ has the presentation
	\[
		\mathbb{Z}\left[u,\alpha,\frac{2}{u^m},x_{i,b},y_{j,k} \right]/I
	\]
	where $I$ is the ideal generated by the relations
	\begin{gather*}
		2\alpha=0,\quad 
		\alpha y_{j,k} = y_{j-1,k},\quad 
		uy_{j,k} = y_{j,k-1},\quad
		 y_{j,k}y_{j',k'}=0, \quad
		 2y_{j,k}=0,\\ 
		ux_{i,b} = x_{i,b-2}, \quad
		 x_{i,b}y_{j,k}=0,\quad 
		 x_{i,b}x_{i',b'}=0,\quad 
		 \alpha x_{i,b}=0,\quad
		 (q^{i}+(-1)^b)x_{i,b}=0
	\end{gather*}
	The indices $i$, $j$, $k$, and $m$ are positive integers.  When $q=p^r$ for $p\neq 2$ then $b$ runs over all integers.  When $q=2^r$ then $b\in\{0,1\}$.

	The (motivic) bi-gradings are 
	\[
		|\alpha| = (-1,-1),\quad |u| = (0,-2),\quad |x_{i,b}| = (2i-1,b),\quad |y_{j,k}| = (j-1,j+2k).
	\]
\end{theorem}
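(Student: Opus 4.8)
The plan is to realize the stated presentation as a formal assembly of the structural results already established, and then to verify that the listed relations are exhaustive by a bidegree-by-bidegree comparison against the additive computation of \cref{thm: p equals 2 computation}. Write $R$ for the $RO(C_2)$-graded $\bbZ$-algebra freely generated by the symbols $u,\alpha,\tfrac{2}{u^m},x_{i,b},y_{j,k}$ in the indicated bidegrees, with $\tfrac{2}{u^m}$ treated as a purely formal symbol exactly as in the definition of $B$. First I would construct a homomorphism $R\to \pi_\star^{C_2}(K_{C_2}(\bbF_{q^2}))$ sending each symbol to the corresponding class: $u,\alpha,\tfrac{2}{u^m}$ and $y_{j,k}$ to the generators of $\HZ_\star^{C_2}=B\oplus M$ from \cref{proposition: HZ star is a square zero extension}, and $x_{i,b}$ to the chosen transfer generator of $N_{2i-1,b}$. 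This map is surjective because $u,\alpha,\tfrac{2}{u^m},y_{j,k}$ generate $\HZ_\star^{C_2}$ as a ring and the $x_{i,b}$ generate $N=\pi_\star^{C_2}(\tau_{\geq 1}K_{C_2}(k))$ over $B$, by the module-structure lemma and its corollary above; in particular, the admissible range of the index $b$ (all integers when $p$ is odd, and $b\in\{0,1\}$ when $q=2^r$) is exactly the generating set identified there.

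Next I would check that every listed relation lies in the kernel, so that the map factors through $R/I$. The grading claims are immediate and each relation is homogeneous. The relations $2\alpha=0$, $\alpha y_{j,k}=y_{j-1,k}$, $uy_{j,k}=y_{j,k-1}$, $2y_{j,k}=0$, and $y_{j,k}y_{j',k'}=0$ are precisely the description of $B\oplus M$ recalled in \cref{proposition: HZ star is a square zero extension}. The square-zero relations $x_{i,b}x_{i',b'}=0$ and $x_{i,b}y_{j,k}=0$, together with $\alpha x_{i,b}=0$, follow from \cref{theorem: square zero extension} and \cref{prop: square zero part,prop: mixed products away from zero}, since the two multiplicands lie in the square-zero ideal $M\oplus N$ or else one factor has nonzero total degree. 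The relation $ux_{i,b}=x_{i,b-2}$ is the content of the lemma computing the $B$-action on $N$ (and $\tfrac{2}{u^m}x_{i,b}=2x_{i,b+2m}$ then follows formally from it, so it need not be imposed). Finally, the torsion relation records that $x_{i,b}$ has the order of the cyclic group $N_{2i-1,b}$ computed in \cref{prop: homotopy of the fiber}, namely $q^i-1$ for $b$ even and $q^i+1$ for $b$ odd.

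The remaining and genuinely hardest step is to prove that these relations generate the entire kernel, i.e.\ that the induced map $R/I\to\pi_\star^{C_2}(K_{C_2}(\bbF_{q^2}))$ is an isomorphism. Since the target is known in every bidegree from \cref{thm: p equals 2 computation} (with its splitting off $\HZ_\star^{C_2}$ furnished by \cref{thm: K groups split}), I would reduce every monomial of $R/I$ to a normal form and match the resulting $\bbZ$-module in each bidegree against the computed answer. Concretely, the relations push every monomial into one of three disjoint families---pure $B$-monomials, $B$-multiples of a single $y_{j,k}$, and $B$-multiples of a single $x_{i,b}$---because all pairwise products of ideal generators vanish; within each family the relations $uy_{j,k}=y_{j,k-1}$, $\alpha y_{j,k}=y_{j-1,k}$, $ux_{i,b}=x_{i,b-2}$, and $\alpha x_{i,b}=0$ collapse the monomials to canonical representatives. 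The main obstacle is the bookkeeping for the two infinite, non-Noetherian towers: one must verify that the formal elements $\tfrac{2}{u^m}$ interact with the $y$- and $x$-towers exactly as the additive answer predicts---that $\tfrac{2}{u^m}x_{i,b}=2x_{i,b+2m}$ yields no surviving $2$-divisible classes beyond those recorded by the orders of $N_{2i-1,b}$, and that no spurious classes appear in negative total degree---so that the normal forms are in bijection with a $\bbZ$-basis of the target in every bidegree. Carrying out this comparison degree by degree, which is tedious but mechanical once the normal forms are fixed, completes the identification.
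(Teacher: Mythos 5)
Your proposal is correct and follows essentially the same route as the paper: the theorem is assembled from the square-zero extension structure (\cref{theorem: square zero extension}, \cref{proposition: HZ star is a square zero extension}), the lemma computing the $B$-module structure of $N$, and the corollary identifying generators of $N$, with the additive answer of \cref{thm: p equals 2 computation} pinning down each bidegree. The paper presents the theorem as a summary of exactly these observations rather than as a separate formal verification, so your surjectivity-plus-kernel framing is just a more explicit packaging of the same argument.
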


\begin{remark}
	The attentive reader will note that the Koszul rule does not appear explicitly in this presentation.  This is a coincidence, coming from the fact that the product of any two elements which both have odd total degree is zero.
\end{remark}

\subsection{Extensions of odd prime degree}

\subsubsection{The $RO(C_\ell)$-graded equivariant $K$-groups $\ul\pi_\star K_{C_\ell}(k)$}

Here we compute the $RO(C_{\ell})$-graded homotopy of $K_{C_{\ell}}(\bbF_{q^{\ell}})$ for an odd prime $\ell$.  The $RO(C_{\ell})$-graded homotopy of $\HZ$ is similar to the case $\ell=2$, except it is a bit simpler because all non-trivial irreducible representations have dimension $2$. Lewis diagrams for the relevant Mackey functors can be found in \cref{table: Cp Mackey}

\begin{table}[h]
\setlength{\tabcolsep}{5mm} 
\def\arraystretch{1.25} 
\centering
\begin{tabular}{|c|c|c|c|c|}
  \hline
    $\boxempty$  & $\circ$   & $\boxslash$ & $\varominus^i$ 
    \\ \hline 
    \begin{tikzcd} \bbZ \ar[d,shift right,"1"'] \\ \bbZ \ar[u,shift right, "\ell"']  \end{tikzcd}  
    &  
    \begin{tikzcd} \bbZ/\ell \ar[d,shift right,"0"'] \\ 0 \ar[u,shift right, "0"']  \end{tikzcd}   
    & 
    \begin{tikzcd} \bbZ \ar[d,shift right,"\ell"'] \\ \bbZ \ar[u,shift right, "1"']  \end{tikzcd}
     &  
    \begin{tikzcd} \bbZ/(q^i-1) \ar[d,shift right,"\sum\limits_{j=0}^{\ell-1}q^{ji}"'] \\ \bbZ/(q^{\ell i}-1) \ar[u,shift right, "1"']  \end{tikzcd} 
     \\ \hline
  \end{tabular}\\
  \caption{$C_{\ell}$-Mackey functors which appear in the $RO(C_{\ell})$-graded homotopy of $K_{C_{\ell}}(\mathbb{F}_{q^{\ell}})$ for $\ell$ an odd prime.}
  \label{table: Cp Mackey}
\end{table}
\begin{theorem}[{\cite[Theorem 8.1]{Ferland--Lewis}}]\label{theorem: Ferland--Lewis odd}
        Let $\ell$ be an odd prime and let $V\in RO(C_{\ell})$.  The $RO(C_{\ell})$-graded homotopy groups of $\HZ$ are given by the following rules:
        \begin{enumerate}
        \item If $|V^{C_{\ell}}|=0$ then
        \[
            \ul{\pi}_V\HZ\cong \begin{cases}
                 0 & |V|>0,\\
                 \boxempty & |V|=0,\\
                 \circ & |V|<0.
            \end{cases}
        \]
        \item If $|V^{C_{\ell}}|>0$ and $|V|=0$ then $\ul{\pi}_V\HZ\cong \boxempty$.
       
        \item If $|V^{C_{\ell}}|<0$ and $|V|=0$ then $\ul{\pi}_V\HZ\cong \boxslash$.
        \item If $|V^{C_{\ell}}|>0$ and $|V|<0$ then
        \[
            \ul{\pi}_V\HZ\cong \begin{cases}
                 \circ & |V^{C_{\ell}}|\ \mathrm{ even},\\
                 0 & |V^{C_{\ell}}|\ \mathrm{ odd},\\
            \end{cases}
        \]
        \item If $|V^{C_{\ell}}|<0$ and $|V|>0$ then
        \[
            \ul{\pi}_V\HZ\cong \begin{cases}
                 \circ & |V^{C_{\ell}}|\leq-3\ \mathrm{ odd},\\
                 0 & else
            \end{cases}
        \]
        \item If $|V^{C_{\ell}}|$ and $|V|$ are both positive or both negative then $\ul{\pi}_V\HZ=0$.
        \end{enumerate}
    \end{theorem}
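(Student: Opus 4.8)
The plan is to compute these groups as the reduced Bredon (co)homology of representation spheres, organized by the isotropy separation (Tate) cofiber sequence of \cref{sec: Tate}. Since $\HZ$ is an Eilenberg--MacLane spectrum, the graded piece $\ul\pi_V\HZ$ is computed by the $\ul\bbZ$-valued $C_\ell$-CW chain complex of the representation sphere $S^V$, so everything is controlled by a choice of $C_\ell$-CW structure on $S^V$. The first step is to cut down the parameters: write $V = m + W$, where $m = |V^{C_\ell}|$ gathers the trivial summands and $W$ is a virtual combination of the two-dimensional rotation representations $\lambda_1,\dots,\lambda_{(\ell-1)/2}$ with $W^{C_\ell}=0$ (so $|W| = |V|-m$ is even). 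Each $S^{\lambda_j}$ has the same $C_\ell$-CW structure — two fixed $0$-cells together with one free orbit of $1$-cells and one of $2$-cells — and because $\ul\bbZ$ has trivial Weyl actions, the associated chain complexes are isomorphic independently of $j$. Hence $\ul\pi_V\HZ$ depends only on the pair $(|V|,|V^{C_\ell}|)$, and it suffices to treat $V = m + n\lambda$ for a single faithful $\lambda$ and all $m,n\in\bbZ$.

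Next I would pin down the answer on the two coordinate axes and in the two ``pure'' cones. The orientation class $u_\lambda\in\pi^{C_\ell}_{2-\lambda}\HZ$ restricts to a unit and identifies the $n\lambda$-graded line with the $2n$-graded line away from the Euler class $a_\lambda\in\pi^{C_\ell}_{-\lambda}\HZ$, while $a_\lambda$ is the inclusion of fixed points $S^0\hookrightarrow S^\lambda$. Computing the reduced Bredon $H_0$ of $S^{n\lambda}$ for $n\ge 0$ directly yields the fixed-point Mackey functor $\ul R(\bbZ)=\boxempty$ when $|V|=0$; Spanier--Whitehead duality $S^{-V}=D(S^V)$ then converts this into the orbit Mackey functor $\ul L(\bbZ)=\boxslash$ along the opposite axis, giving cases (2) and (3). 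The vanishing in the positive cone (case (6), $|V|,|V^{C_\ell}|>0$) is the statement that the reduced homology of a highly connected representation sphere is concentrated above the relevant degree, and dualizes to handle the negative cone; the $|V^{C_\ell}|=0$ boundary of case (1) is the degenerate instance of this same analysis.

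The heart of the computation is the mixed region, cases (4) and (5). Here I would add rotation summands one at a time using the cofiber sequence obtained by smashing $S^0\xrightarrow{a_\lambda} S^\lambda$ with $S^{m+k\lambda}$,
\[
    S^{m+k\lambda}\xrightarrow{\ a_\lambda\ } S^{m+(k+1)\lambda}\to \Sigma\, S(\lambda)_+\wedge S^{m+k\lambda},
\]
where $S(\lambda)$ is the unit circle with its free $C_\ell$-action. The cofiber is a free $C_\ell$-spectrum, so its $\HZ$-homology is governed by the group homology $H_*(C_\ell;\bbZ)$, which contributes a $\bbZ/\ell$ exactly in odd degrees. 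Tracking the resulting long exact sequences in $\ul\pi_\star\HZ$ shows that the only surviving classes in this region are cokernels of norm maps, concentrated at the $C_\ell/C_\ell$ level with value $\bbZ/\ell$, which is precisely $\coker(\ul L(\bbZ)\to\ul R(\bbZ))=\circ$. Equivalently, this is the appearance of $\widehat H^{\mathrm{odd}}(C_\ell;\bbZ)\cong\bbZ/\ell$ in the Tate column $\tEG\wedge\HZ$ of the isotropy separation sequence.

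The main obstacle is the precise bookkeeping that produces the parity dichotomies and the cutoff: in case (4) one gets $\circ$ when $|V^{C_\ell}|$ is even, whereas in case (5) one gets $\circ$ when $|V^{C_\ell}|$ is odd and $\le -3$, with $|V^{C_\ell}|=-1$ exceptional. These reflect the parity of the top cell and whether the connecting maps across each added $\lambda$ are injective, surjective, or zero, together with the fact that the Euler-class tower has not yet ``turned on'' at $|V^{C_\ell}|=\pm 1$. At each stage the long exact sequences leave a potential extension problem, but every Mackey functor in sight is of the form $\ul R(\bbZ)$, $\ul L(\bbZ)$, or a top-concentrated cokernel, so \cref{prop: no extensions} (and its dual for $\ul L$) forces each extension to split. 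This is exactly where working with the full Mackey-functor structure, rather than the $C_\ell/C_\ell$-level alone, makes the parity conditions fall out unambiguously.
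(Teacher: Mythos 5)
The paper offers no proof of this statement: it is recalled verbatim from Ferland--Lewis (recording unpublished work of Stong) and used as a black box, so there is nothing internal to compare against. Your sketch is therefore an independent reconstruction, and it follows the standard route: reduce to $V=m+n\lambda$ for a single faithful rotation $\lambda$ (valid here because $\ul\bbZ$ has trivial Weyl action, so all $S^{\lambda_j}$ have isomorphic Bredon chain complexes, and for odd $\ell$ there are no orientation issues), compute the axes from explicit cell structures and Spanier--Whitehead duality, and fill in the mixed cones by inducting over the Euler-class cofiber sequences. That is the right architecture, and the extension problems are indeed handled by \cref{prop: no extensions} and its dual. One small slip: the group you want in case (2) is not the ``reduced Bredon $H_0$ of $S^{n\lambda}$'' (which vanishes for $n\geq 1$ since $S^{n\lambda}$ is connected) but the top-dimensional homology $\ul H^{C_\ell}_{2n}(S^{n\lambda};\ul\bbZ)=\ker(\partial_{2n})\cong\boxempty$, which computes $\ul\pi_{2n-n\lambda}\HZ$; dualizing gives $\boxslash$ for case (3).

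The one step that would fail as written is the identification of the cofiber term. The spectrum $\Sigma S(\lambda)_+\wedge S^{m+k\lambda}\wedge\HZ$ is free, but $S(\lambda)$ is only the $1$-skeleton of $EC_\ell$, so its homotopy Mackey functors are computed from $H_*(S(\lambda)/C_\ell;\bbZ)=H_*(S^1;\bbZ)$: one gets $\boxslash$ and $\boxempty$ in two adjacent degrees and \emph{no} $\bbZ/\ell$ classes. The assertion that this term is ``governed by $H_*(C_\ell;\bbZ)$, contributing a $\bbZ/\ell$ exactly in odd degrees'' conflates $S(\lambda)_+$ with $EC_{\ell+}$ and, taken literally, would put torsion into the long exact sequences in the wrong places and prevent the induction from closing. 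The correct mechanism is the one you state in your next sentence: the connecting homomorphisms are norm maps $\boxslash\to\boxempty$ (multiplication by $\ell$ at the top level), and the $\circ$'s in cases (4) and (5) arise precisely as their cokernels, with the parity dichotomies and the $|V^{C_\ell}|\le -3$ cutoff coming from which of kernel or cokernel survives at each stage. With that sentence replaced by the honest computation of $\ul\pi_*(S(\lambda)_+\wedge\HZ)$, the bookkeeping you describe does reproduce the stated table.
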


    Once again we can immediately obtain the $RO(C_{\ell})$-graded $K$-groups using \cref{thm: K groups split}.
    \begin{theorem}\label{thm: odd prime computation}
        The $RO(C_{\ell})$-graded homotopy Mackey functors of $K_{C_{\ell}}(\bbF_{q^{\ell}})$ admit a direct sum decomposition
        \[
            \ul{\pi}_VK_{C_{\ell}}(\bbF_{q^{\ell}})\cong \ul{\pi}_V \left( \tau_{\geq 1} K_G(k) \right)\oplus \ul{\pi}_V\HZ.
        \]
        Explicitly, they are given by
        \[
            \ul{\pi}_VK_{C_{\ell}}(\bbF_{q^{\ell}}) = \begin{cases}
                0 & |V|=2i>0\ and\ |V^{C_{\ell}}|>-3\ \mathrm{or\ even}\\
                \circ & |V|=2i>0\ and\ |V^{C_{\ell}}|<-3\ \mathrm{and\ odd}\\
                \circ\oplus \varominus^i & |V|=2i-1>0\ and\ |V^{C_{\ell}}|\leq-3\ \mathrm{and\ odd}\\
                \varominus^i & |V|=2i-1>0\ and\ |V^{C_{\ell}}|>-3\ \mathrm{and\ odd}\\
                \ul\pi_V\HZ & \mathrm{else}.
            \end{cases}
        \]
    \end{theorem}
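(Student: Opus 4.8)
The plan is to assemble the statement from three facts already established for $G = C_\ell$: the splitting of \cref{thm: K groups split}, the computation of the fiber in \cref{prop: homotopy of the fiber}, and the calculation of $\ul{\pi}_\star\HZ$ in \cref{theorem: Ferland--Lewis odd}. The direct sum decomposition $\ul{\pi}_V K_{C_\ell}(\bbF_{q^\ell}) \cong \ul{\pi}_V(\tau_{\geq 1}K_G(k)) \oplus \ul{\pi}_V\HZ$ is immediate from \cref{thm: K groups split}, so the only real task is to identify each summand and overlay the two. First I would record the fiber contribution. Since $\ell$ is odd, $C_\ell$ has odd order, so every virtual representation is orientation-preserving; only the orientation-preserving half of \cref{prop: homotopy of the fiber} is relevant, and it gives
\[
    \ul{\pi}_V\left(\tau_{\geq 1} K_G(k)\right) \cong
    \begin{cases}
        \varominus^i & |V| = 2i-1 > 0,\\
        0 & \text{otherwise.}
    \end{cases}
\]
Thus this summand is concentrated in positive odd total degree, where it equals $\varominus^i$.

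The second step is a parity observation that organizes the case analysis. For $C_\ell$ with $\ell$ odd, every nontrivial irreducible real representation is two-dimensional with trivial fixed points, so $|V| - |V^{C_\ell}|$ is always even; that is, $|V|$ and $|V^{C_\ell}|$ have the same parity. Consequently the fiber summand $\varominus^i$, which requires $|V|$ odd, can only appear when $|V^{C_\ell}|$ is odd as well, and the ``$\circ$'' contributions of $\HZ$ in \cref{theorem: Ferland--Lewis odd}, which require $|V^{C_\ell}|$ odd, likewise force $|V|$ to be odd. This alignment means the even and odd total-degree regions decouple cleanly.

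Finally I would superimpose the two computations. When $|V| \leq 0$ the fiber vanishes and one reads off $\ul{\pi}_V\HZ$ directly from \cref{theorem: Ferland--Lewis odd}, giving the ``else'' case. For $|V| = 2i > 0$ even, the fiber is zero and, by the parity observation, $|V^{C_\ell}|$ is even, so case (e) of \cref{theorem: Ferland--Lewis odd} contributes nothing (its $\circ$ requires $|V^{C_\ell}|$ odd) and the group is $0$. For $|V| = 2i-1 > 0$ odd, the fiber contributes $\varominus^i$ and, with $|V^{C_\ell}|$ forced odd, $\HZ$ contributes $\circ$ precisely when $|V^{C_\ell}| \leq -3$ by case (e), and $0$ otherwise by cases (e) and (f); adding these yields $\circ \oplus \varominus^i$ or $\varominus^i$ as claimed. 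I do not expect a genuine obstacle here: the content is entirely bookkeeping, and the only point demanding care is matching the boundary value $|V^{C_\ell}| = -3$ against the strict versus non-strict inequalities in the two sources, which the parity constraint renders unambiguous.
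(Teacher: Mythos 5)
Your proposal is correct and is exactly the argument the paper intends: the paper gives no separate proof, stating only that the result follows ``immediately'' by combining \cref{thm: K groups split} with \cref{prop: homotopy of the fiber} and \cref{theorem: Ferland--Lewis odd}, which is precisely the bookkeeping you carry out. Your parity observation (that $|V|$ and $|V^{C_\ell}|$ always agree mod $2$ for odd $\ell$, so all representations are orientation-preserving and the case split aligns) is the right justification for the overlay and correctly identifies the only point of care.
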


\small
\bibliographystyle{alpha}
\bibliography{main}

\noindent \textsc{Department of Mathematics, Michigan State University, East Lansing, MI 48824} \\
\noindent \emph{Email address:} \texttt{chandav2@msu.edu}

\vspace{\medskipamount}

\noindent \textsc{Department of Mathematics, Cornell University, Ithaca, NY 14853} \\
\noindent \emph{Email address:} \texttt{cpv29@cornell.edu}

\end{document}